\numberwithin{equation}{section}
\newtheorem{thm}{Theorem}[section]
\newtheorem*{thm*}{Theorem}
\newtheorem{prop}[thm]{Proposition}
\newtheorem{lem}[thm]{Lemma}
\newtheorem{rmk}{Remark}
\numberwithin{equation}{section}
\def\F{\mathbb{F}}
\def\Z{\mathbb{Z}}
\def\A{\mathcal{A}}
\def\H{\mathcal{H}}
\def\C{\mathbb{C}}
\def\u{N_{q}(M^{s}_{\mathcal O_X}(2,0))}
\def\bigcirc{O}
\def\w{N_q(M_{\text{L}}(n,d))}
\def\cM{\mathcal{M}_{L}(n,d)}
\def\M{M_{L}(n,d)}
\def\Mus{\mathcal{M}_{L}^{\text{us}}(n,d)}
\def\A{\frac{1}{N_{q}(\Aut E)}}
\def\fP{\mathfrak P}
\def \N{\widetilde{N}}
\def\hh{\H_{\gamma,q}}
\def\hhh{\H_{\gamma,q^{2}}}
\def \({\left(}
\def \){\right)}
\def \<{\langle}
\def \>{\rangle}
\def \bar{\overline}
\def \deg{\mathrm{deg}}
\def \ds{\displaystyle}
\def \Aut{{\rm Aut}}
\def \End{{\rm End}}
\def \Hom{{\rm Hom}}
\DeclareMathOperator{\Gal}{Gal}
\begin{document}
	
	\title[Statistics of Moduli Space]{Statistics of Moduli Space of vector bundles {\textbf{$\mathrm{II}$}}}

	\author[A. Dey]{Arijit Dey}
	\address{Department of Mathematics, Indian Institute of Technology-Madras, Chennai, India }
	
	\email{arijitdey@gmail.com}
	
	\author[S. Dey]{Sampa Dey}
	\address{Stat Math Unit,Indian Statistical Institute, Kolkata, India }
	
	\email{sampa.math@gmail.com}

	\author[A. Mukhopadhyay]{Anirban Mukhopadhyay} 
	\address{Institute of Mathematical Sciences, HBNI, CIT Campus, Taramani,  Chennai, India}
	
	\email{anirban@imsc.res.in}
	
	\subjclass[2010]{Primary 14D20; Secondary 14G17,60F05.}
	
	\keywords{ Vector bundles, Parabolic vector bundles, Moduli space, Seshadri desingularisation, Finite fields, hyperelliptic curve, Zeta function, Artin L-series, Gaussian distribution, Siegel formula, Harder-Narasimhan filtration.}

	\begin{abstract} 
		
		Let $X$ be a smooth irreducible projective curve of genus $g \geq 2$ over a finite field $\F_{q}$ of characteristic $p$ with  $q$ elements  such that the function field $\F_{q}(X)$ is a geometric Galois extension of the rational function field of degree $N.$ Consider $gcd(n,d)=1$, let $M_{L}(n,d)$ be the moduli space of rank $n$ stable vector bundles over $X$ with fixed determinant isomorphic to a $\mathbb F_q$-rational line bundle $L$. Suppose $N_q (M_L(n,d))$ denotes the cardinality of the set of  $\F_{q}$-rational points of $M_{L}(n,d)$. We give an asymptotic bound of  $\log(N_{q}(M_{L}(n,d)) -  (n^2-1)(g-1)\log{q})$ for large genus $g,$ depending on $N$. Further, considering this logarithmic difference as a random variable, we prove a central limit theorem over a large family of hyperelliptic curves with uniform probability measure. Further, over the same family of hyperelliptic curves, we study the distribution of $\F_{q}$-rational points over the moduli space of rank $2$ stable vector bundles with trivial determinant  $M^{s}_{\mathcal{O}_{H}}(2,0)$ and it's Seshadri desingularisation ${\widetilde{N}}$ by choosing an appropriate random variable in each case. We also see that the corresponding random variables having standard Gaussian distribution as $g$ and $q$ tends to infinity.
\end{abstract}

\maketitle

\tableofcontents

\section{Introduction}

Let $V$ be a quasi-projective variety defined over a finite field $\F_{q}$ of characteristic $p$ with $q$ elements. Studying $\mathbb{F}_{q}$-rational points $N_q(V)$ of $V$ is significant in various branches of mathematics such as finite field theory, number theory, algebraic geometry and so on. For example, one of the important theorem in algebraic geometry  known as famous Weil conjecture \cite{We1} which allows one to compute Betti numbers of a smooth complex projective variety and $\mathbb F_{q}$-rational points of certain varieties associated to it.  This method was first exploitted by Harder and Narasimhan \cite{HaNa} and later by Desale and Ramanan \cite{DeRa} to find a comprehensive method to compute Betti numbers of  moduli space of bundles. 
Finding asymptotic formulas for $N_{q^m}(V)$ in terms of $q$ or other parameters related to $V,$ is another active area of research. In this paper our aim is to study the asymptotic behaviour of the $\mathbb{F}_{q}$-rational points of  moduli space of vector bundles on a smooth projective curve. 

Let $X$ be a smooth irreducible projective curve of genus $g \geq 2$ over a finite field $\F_{q}$ with $q$ elements of characteristic $p$ and $L$ be a line bundle on $X$ of degree $d,$ defined over $\mathbb F_q.$ Let $M(n,d)$ (resp, $M^s(n,d)$) be the moduli space of semistable (resp, stable) vector bundles of rank $n$ and degree $d,$ and $M_{L}(n,d)$ (resp. $M^{s}_{L}(n,d)$) be the moduli space of semistable (resp. stable) vector bundles of rank $n$ with fixed determinant $L.$ When $n$ and $d$ are coprime, the moduli space $M_{L}(n,d)$ is an irreducible smooth projective variety of dimension $(n^{2}-1)(g-1).$ Replacing $\F_q$ by a finite extension if necessary, we may assume that everything i.e. $X$, $L$ and $M_{L}(n,d)$ are defined over $\F_q.$ It is known that when $gcd(n,d)=1,$ the $\F_{q}$-rational points of $M_{L}(n,d)$ are precisely the isomorphism classes of stable vector bundles on $X$ defined over $\F_{q}$  [Proposition 1.2.1, \cite{HaNa}].
Now, when rank $n=1$ and degree $d=0,$ the moduli space $M(1,0)$ is the Jacobian $J_{X}$ of the curve $X,$ which is an abelian variety of dimension $g.$ Due to \textit{Weil conjectures}, the functional equation and analogue of the Riemann hypothesis for zeta function of a smooth projective curve of genus $g$ implies:
\[(\sqrt{q}-1)^{2g}\leq N_{q}(J_{X})\leq (\sqrt{q}+1)^{2g}.\]
For $g=1,$ this bound is tight due to the classical result of Deuring \cite{Deu}. For higher genus we know several improvements of this bound by Rosenbloom and Tsfasman\cite{RoTs}, Quebbemann\cite{Qu}, Tsfasman\cite{Ts}, Stein and Teske\cite{StTe} and others. For example, in \cite{Ts} Tsfasman has shown that for a fixed finite field $\F_{q},$
\[g\log{q} + o(g)\leq \log(N_{q}(J_{X}))\leq g\left( \log{q}+(\sqrt{q}-1)\log\frac{q}{q-1}\right)+o(g) \]
as $g\rightarrow \infty.$ In terms of \textit{gonality} (that is the smallest integer $d$ such that $X$ admits a non-constant map of degree $d$ to the projective line over $\F_{q}$), 
Shparlinski \cite{Sh}, showed that  
\[ \log(N_{q}(J_{X}))=g\left(\log{q}+O_{q}\left(\frac{1}{\log{\left( \frac{g}{d}\right) }} \right)  \right) \]
as $q$ is fixed and $g\rightarrow \infty.$
In \cite{XiZa}, when the function field $\F_{q}(X)$ is a geometric Galois extension over the field of rational functions $\F_{q}(x)$ of degree $N,$ Xiong and Zaharescu estimated $N_{q}(J_{X})$ in terms of $q,\,g$ and $N.$ They gave the following explicit bound [Theorem 1,  \cite{XiZa}]
\[
| \log(N_{q}(J_{X}))-g\log{q} | 
\leq
(N-1)
\left( 
\log \text{max} 
\left\lbrace 
1, \frac{       \log{    \(    \frac{7g}{N-1}      \)    }     }             {\log q}
\right\rbrace 
+3 
\right) ,
\]
which holds true for any $q$ and $g.$ Precisely, we see the quantity $\left( \log(N_{q}(J_{X}))-g\log{q}\right) $ is essentially bounded by $O\left(  \log \log g  \right), $ which is significantly smaller than the bound $O\left(  \frac{g}{\log g} \right) $ given by Shparlinski in \cite{Sh}.

Motivated by the work in \cite{XiZa} of Xiong and Zaharescu, we got interested in studying similar problems for moduli space of  stable rank $n$ and degree $d$ vector bundles with fixed determinant  on a smooth curve. This can be interpreted as a non-abelian analogue of their work. Now onwards, we will be considering a smooth projective curve $X$ of genus $g,$ defined over $\F_{q},$ where the function field $\F_{q}(X)$ is a geometric Galois extension over the field of rational functions $\F_{q}(x)$ of degree $N.$ We will simply call such curves as \emph{Galois curve} of degree $N.$ Here ``geometric '' means $\F_{q}$ is algebraically closed inside $\F_{q}(X).$
In a previous work , we explicitly studied the case when rank $n=2$ and degree $d=1,$ and we write the bound for $\log \left( N_{q}\left( M_{L}(2,1)\right) \right) $ in terms of $q,\,g$ and $N$ [Theorem 1.1, \cite{ASA}]. To prove this we used the Siegel formula\eqref{Si2.1}, where we need to count the number of isomorphism classes of unstable bundles too.  In this paper, we have generalized our previous results for the fixed determinant moduli space of any rank $n$ and degree $d$ with the condition that $gcd(n,d)=1.$ The challenging part was estimating the number of $\mathbb F_q$-rational points of  isomophism classes of unstable vector bundles. In this regard, the significant input comes from the work of Desale and Ramanan [Proposition 1.7, \cite{DeRa}], by which we could give an asymptotic bound in terms of $g,$ and $q$ inductively (cf. Proposition \ref{Pdom5.1}). It is worth mentioning here that one can do similar computations for non-coprime case too but the difficulty will arise in computing the number of $\mathbb F_q$-rational points 
of strictly-semistable strata.  In the rank $2$ and trivial determinant this was done in \cite{BaSe1} with great bit of care, and by using this we are able to do similar study for $M_{\mathcal O_X}(2,0)$ and it's Seshadri desingularization. \\

Our first main result is the following asymptotic formula for $\w$ in terms of $N, \, q$ and $g.$ 

\begin{thm}\label{thm1.3}
Let	$X$ be a Galois curve of degree $N$ of genus $g\geq 2$ over $\mathbb{F}_{q}.$ Assume that $n,$ and $d$ are coprime. 
If $\log{g} > \kappa\log(Nq)$ for some $\kappa>0$, sufficiently large absolute constant (independent of $N$), then for a constant $\sigma>0,$ depending only on $n,$
we have
	\begin{align*}
	\log(\w)&= (n^2-1)(g-1)\log{q}\,
	 + 
O\left( 
A+q^{-\sigma g}\exp(A)
\right), 
	\end{align*}
	where 
	$A=N\left( \frac{1}{\sqrt q}+\frac{\log\log g}{q^2}\right) ,$ and 
	the implied constant depends on $n,$ and $N.$
\end{thm}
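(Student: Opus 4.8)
The plan is to run the Siegel mass formula \eqref{Si2.1} on the fixed-determinant stack and then isolate the stable locus. Since $\gcd(n,d)=1$, stability and semistability coincide, and every stable bundle $E$ defined over $\F_q$ has $\End E=\F_q$, hence $\Aut E=\F_q^{\times}$ and $N_q(\Aut E)=q-1$. Thus the stable bundles contribute exactly $\frac{\w}{q-1}$ to the total mass $\sum_{\det E\cong L}\frac{1}{N_q(\Aut E)}$, which the Siegel formula evaluates as $\frac{q^{(n^2-1)(g-1)}}{q-1}\prod_{k=2}^{n}\zz(k)$ (the Jacobian factor cancels for fixed determinant). Subtracting the unstable locus gives
\[
\w=q^{(n^2-1)(g-1)}\prod_{k=2}^{n}\zz(k)\;-\;(q-1)\!\!\sum_{E\ \mathrm{unstable}}\!\!\frac{1}{N_q(\Aut E)} .
\]
Here the $\frac{1}{q-1}$ of the Siegel normalisation is cancelled by the $(q-1)$ of the scalar automorphisms, which is what produces the clean leading exponent $(n^2-1)(g-1)$ with no stray $\log q$.

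I would then treat the two terms separately. For the first, the main term is already $(n^2-1)(g-1)\log q$, so it remains to bound $\log\prod_{k=2}^{n}\zz(k)$. Writing $\log\zz(k)=\sum_{m\ge1}\frac{N_{q^m}(X)}{m}q^{-km}$ and $N_{q^m}(X)=q^m+1-\sum_i\alpha_i^m$ with $|\alpha_i|=\sqrt q$, the trivial ``$\mathbb{P}^1$'' part contributes $O(1/q)$, while the eigenvalue sums are controlled by combining the Weil bound $|\sum_i\alpha_i^m|\le 2g\,q^{m/2}$ with the covering bound $N_{q^m}(X)\le N(q^m+1)$ forced by the degree-$N$ map $X\to\mathbb{P}^1$. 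Optimising the split of the $m$-sum yields $\log\prod_{k=2}^{n}\zz(k)=O(N/\sqrt q)$, which is absorbed into $O(A)$.

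The genuinely hard term is the unstable sum. I would stratify by Harder--Narasimhan type: every unstable $E$ has a canonical filtration with semistable quotients of strictly decreasing slopes, and Desale--Ramanan [Proposition 1.7, \cite{DeRa}] rewrites the mass of each stratum as a product of masses of lower-rank moduli together with Jacobian point counts $N_q(J_X)$, since the determinants of the sub-quotients sweep out cosets of $J_X$. Running this as an induction on $n$ --- which is exactly Proposition \ref{Pdom5.1} --- I would show the total unstable mass is smaller than the main term by a factor $q^{-\sigma g}$, where $\sigma>0$ depends only on $n$ and reflects the positive codimension of every stratum; each Jacobian factor is then replaced using the Xiong--Zaharescu estimate $\log N_q(J_X)=g\log q+O(N\log\log g)$ from \cite{XiZa}, and the accumulated special-value factors from the lower-rank pieces contribute the $\exp(A)$. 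This is the main obstacle: one must keep the codimension saving uniform over all Harder--Narasimhan types and all ranks in the recursion, since a careless bound destroys the clean power $q^{-\sigma g}$. The hypothesis $\log g>\kappa\log(Nq)$ is precisely what guarantees that neither the combinatorial number of strata nor the $\exp(A)$ factor ever overtakes the gain $q^{-\sigma g}$, so that the relative unstable contribution is $O\!\big(q^{-\sigma g}\exp(A)\big)$.

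Finally I would reassemble. Factoring the displayed identity as $\w=q^{(n^2-1)(g-1)}\prod_{k=2}^{n}\zz(k)\,(1-R)$ with $R=O\!\big(q^{-\sigma g}\exp A\big)$ and taking logarithms gives
\[
\log(\w)=(n^2-1)(g-1)\log q+\log\!\prod_{k=2}^{n}\zz(k)+\log(1-R),
\]
and using $\log\prod_{k=2}^{n}\zz(k)=O(A)$ together with $\log(1-R)=O(R)=O\!\big(q^{-\sigma g}\exp A\big)$ yields the asserted bound $O\!\big(A+q^{-\sigma g}\exp(A)\big)$, with implied constant depending on $n$ and $N$.
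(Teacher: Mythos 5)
Your proposal is correct and follows the same skeleton as the paper's proof: the Siegel formula \eqref{Si2.1} with the stable mass $\w/(q-1)$ split off (the paper's \eqref{Sigen2.2}), the Harder--Narasimhan stratification handled by the Desale--Ramanan recursion and induction on rank (exactly Proposition \ref{Pdom5.1}, fed by the Xiong--Zaharescu Jacobian bound and the zeta-value bounds), and the final reassembly in which the relative unstable contribution $q^{-\sigma g}\exp(A)$ and the zeta logarithms $O(A)$ produce the stated error. The one place you genuinely deviate is the treatment of $\log\zeta_{X}(k)$: the paper routes this through the Artin $L$-series factorization \eqref{l-function-product}, converting the eigenvalue sums into character sums over primes via \eqref{theta-lambda-equality5.1} and bounding them by $(N-1)(q^m+1)$ before splitting at $Z\approx \log g/\log q$ (Lemma \ref{epsilon-Z5.1} and Proposition \ref{log-zeta5.2}), whereas you use the elementary covering bound $N_{q^m}(X)\le N(q^m+1)$ coming from the degree-$N$ map $X\to\mathbb{P}^1$. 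These inputs are equivalent in strength, since the covering bound also gives $\bigl|\sum_{l}e(m\theta_{l,X})\bigr|\le (N-1)(q^m+1)q^{-m/2}$, so your split-and-optimize argument is legitimate; it even yields the cleaner estimate $O(N/\sqrt q)$, avoiding the $N\log\log g/q^{k}$ term of Proposition \ref{log-zeta5.2}, which is an artifact of the paper bounding $\sum_{m\le Z}1/(mq^{km})$ by $q^{-k}\log Z$ instead of by $1/(q^k-1)$. Since $N/\sqrt q\le A$, your stronger bound still sits inside the asserted error $O\left(A+q^{-\sigma g}\exp(A)\right)$, so nothing is lost; what the paper's $L$-function formalism buys is not needed for Theorem \ref{thm1.3} itself but becomes essential in the later distributional results, where the character sums over the hyperelliptic family are averaged.
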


%

Next we restrict our attention to the family of hyperelliptic curves i.e when $N=2$. Assume that $q$ is odd and  $\gamma$ is a positive integer $\geq\,5$.  Let $\H_{\gamma,q}$ be a family of curves given by the equation $y^2=F_{t}(x)$, where $F_{t}$ is a monic, square-free polynomial of degree $\gamma$ with coefficients in  $\mathbb{F}_{q}.$ Every such curve corresponds to an affine model of a unique projective hyperelliptic curve $H_t$, with genus $ g = \left[\frac{\gamma-1}{2}\right]$.

On $\H_{\gamma,q}$ we consider the probability space with the uniform probability measure obtained by picking the coefficients of $F_t(x)$ uniformly from the $\F_{q}$-vector space $\F_{q}^{\gamma}.$ With this set up, when $g$ is 
fixed and $q$ is growing, Katz and Sarnak showed that $ \sqrt{q}(\log N_{q}(J_{H_t})- g \log {q})$ is distributed as the 
trace of a random $2g\times2g$ unitary symplectic matrix \cite[Chapter 10, Variant 10.1.18]{KaSa}. On the other side, 
when the finite field is fixed and the genus $g$ grows, Xiong and Zaharescu \cite{XiZa} found  the limiting distribution 
of $\log N_{q}(J_{H_t})- g \log {q}$ 
in terms of its characteristic function. Moreover, when both $g$ and $q$ grow, they\cite{XiZa} showed that $ \sqrt{q}(\log N_{q}(J_{H_t})- g \log {q}) $ has a standard Gaussian distribution. Now, for every $H_t$ in $\H_{\gamma,q}$, fix a polarization i.e. a line bundle $L_t$ of degree $1$. In our previous work, we have studied the fluctuations of the quantity
$\log N_q (M_{L_t}(2,1)) - 3(g-1)\log q$ as the polarized curve $(H_t, L_t)$ varies over a large family of polarized hyperelliptic
curves. Since we are interested in $\mathbb F_{q}$-rational points of moduli spaces which is independent of the determinant  [Proposition 1.7, \cite{DeRa}], we denote the family of polarized curves by the same notation $\H_{\gamma,q}$. In this case, first we write the limiting distribution of $N_q (M_{L_t}(2,1)) -  3(g-1)\log{q}$ as $g$ grows and $q$ is fixed, in terms of it's characteristic function. Further, when $g$ and $q$ both grows together we see that $q^{3/2}\left(\log N_q (M_{L_t}(2,1)) -3(g-1)\log{q}\right)$ has a standard Gaussian distribution (see [Theorem 1.2, \cite{ASA}].) \\

Now, on the probability space $\H_{\gamma,q}$, we consider the random variable $\mathfrak{R}_{(g,q)},$
\[
\mathfrak R_{(g,q)} : \H_{\gamma,q} \rightarrow \mathbb{R}
\]
which sends any element $(H_t, L_t)$ in $\H_{\gamma, q}$ to the difference $\left( \log N_{q}(M_{L_t}(n,d))- (n^2-1)(g-1)\log{q}\right)$ in $\mathbb{R}.$ 
We find that, over the family $\H_{\gamma, q},$ the random variable  $\mathfrak{R}_{(g,q)}$ has a limiting distribution as $g$ grows and $q$ is fixed, we write it in terms of the characteristic function. Interestingly when both the genus and the size of the finite field grow we 
get the following central limit theorem.

\begin{thm}\label{thm1.4}
	(1). If $q$ is fixed and $g\rightarrow\infty,$ then : 
		\begin{equation*}
	\mathfrak{R}_{(\mathcal {L};g,q)}(H)-
	\log{\left(\frac{q^{(n^2-1)}}{\prod\limits_{k=2}\limits^{n}(q^{k-1}-1)(q^k-1)}\right)}
	+\delta_{\gamma/2}\sum\limits_{k=2}\limits^{n}\log(1- 1/q^{k})
	\end{equation*}
	converges weakly to a random variable $\mathcal{R},$ whose characteristic function 
	$\phi(\tau)=\mathbb{E}(e^{i\tau\mathcal{R}}) $ is given by 
	\begin{eqnarray*}
		\phi(\tau)&=& 1+ \sum\limits_{r=1}\limits^{\infty}\frac{1}{2^{r}r!}\sum\limits_{\substack{P_{j}\text{distinct}\\1\leq j\leq r}}\prod\limits_{j=1}\limits^{r}\left(\dfrac{(1-\mid P_{j} \mid^{-2})^{-i\tau}+ (1+\mid P_{j} \mid^{-2})^{-i\tau}-2}{(1+ \mid P_{j}\mid ^{-1})}\right),  \, \, 
	\end{eqnarray*}
	for all real number $\tau.$
	Here $P_{j}$'s are monic, irreducible polynomials in $\F_{q}[x]$ and $\mid P_{j}\mid= q^{\deg P_{j}}.$ $\delta_{\gamma/2}=1$ if $\gamma$ is even and $0$ otherwise.
	
	(2). If both $q, g\rightarrow\infty,$ then 
	the random variable
	${q^{3/2}\mathfrak{R}_{(g,q)}}$ is distributed as a standard Gaussian.
	More precisely, for each $z\in \mathbb{R},$ we have, 
	\begin{equation*}
	\lim\limits_{\substack{q\rightarrow\infty\\g\rightarrow\infty}} \mathbb{P} \left( {q^{3/2}\mathfrak{R}_{(g,q)}} \leq z\right) =\frac{1}{\sqrt{2\pi}}\int_{-\infty}^{z}e^{-\tau^{2}/2}d\tau .
	\end{equation*}
	\end{thm}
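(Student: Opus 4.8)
The plan is to first convert the point count $N_q(M_{L_t}(n,d))$ into an expression in the zeta values of $H_t$, and then analyze the latter as a sum over the primes of $\F_q[x]$ of nearly independent random variables governed by the quadratic symbol $\chi=\chi_{F_t}$. Concretely, since $\gcd(n,d)=1$ every semistable bundle is stable with automorphism group $\F_q^*$, so the Siegel/Harder--Narasimhan mass formula for fixed determinant reads $N_q(M_{L_t}(n,d)) = q^{(n^2-1)(g-1)}\prod_{k=2}^n \zeta_{H_t}(k)\,\bigl(1-E_g\bigr)$, where the factor $q-1$ from the point count cancels the $\tfrac1{q-1}$ in the mass and $E_g$ is the normalized mass of the unstable strata. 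The unstable-strata bound of Proposition \ref{Pdom5.1} (which underlies Theorem \ref{thm1.3}) gives $E_g=O(q^{-\sigma g})$, so $\mathfrak R_{(g,q)} = \sum_{k=2}^n \log\zeta_{H_t}(k) + O(q^{-\sigma g})$. Writing $\zeta_{H_t}(k)=P_{H_t}(q^{-k})/\bigl((1-q^{-k})(1-q^{1-k})\bigr)$ and collecting the elementary factors gives $\prod_{k=2}^n\bigl((1-q^{-k})(1-q^{1-k})\bigr)^{-1}=q^{n^2-1}/\prod_{k=2}^n (q^{k-1}-1)(q^k-1)$, which is exactly the deterministic shift subtracted in the statement; after this subtraction only $\sum_{k=2}^n\log P_{H_t}(q^{-k})$ remains.

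Next I would pass to the Dirichlet $L$-function. For $y^2=F_t(x)$ the numerator $P_{H_t}$ equals the quadratic $L$-polynomial $\mathcal L(u,\chi)=\prod_P(1-\chi(P)u^{\deg P})^{-1}$ up to a single local factor at infinity whose presence depends on the parity of $\gamma$; this is precisely the source of the $\delta_{\gamma/2}\sum_{k}\log(1-1/q^k)$ correction. Since $q^{-k}<q^{-1}$ for every $k\ge2$, the Euler product converges absolutely at $u=q^{-k}$, so $\log P_{H_t}(q^{-k}) = -\sum_P \log\bigl(1-\chi(P)\,|P|^{-k}\bigr)$ (up to the infinity factor), a sum bounded uniformly in $g$ because $\sum_P|P|^{-k}<\infty$. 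This exhibits the shifted, corrected random variable as an absolutely convergent sum of bounded functions of the symbols $\{\chi(P)\}_P$.

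For part (1) I would invoke the limiting joint distribution of the symbols: as $\gamma\to\infty$ with $q$ fixed, for any finite set of distinct monic irreducibles $P_1,\dots,P_r$ the vector $(\chi(P_1),\dots,\chi(P_r))$ converges to independent variables with $\mathbb P(\chi(P)=\pm1)=\tfrac12(1+|P|^{-1})^{-1}$ and $\mathbb P(\chi(P)=0)=|P|^{-1}(1+|P|^{-1})^{-1}$ (the standard count of squarefree $F_t$ divisible by $P$), as in \cite{XiZa} and our previous work. The characteristic function then factorizes over primes: a direct evaluation of the per-prime expectation $\mathbb E\bigl[(1-\chi(P)|P|^{-2})^{-i\tau}\bigr]$ produces the factor $1+\bigl((1-|P|^{-2})^{-i\tau}+(1+|P|^{-2})^{-i\tau}-2\bigr)/\bigl(2(1+|P|^{-1})\bigr)$, and expanding the resulting product over primes yields the stated series for $\phi(\tau)$; Lévy continuity upgrades pointwise convergence of $\phi$ to weak convergence. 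The main obstacle here is analytic rather than algebraic: one must justify interchanging the $g\to\infty$ limit with the infinite product, i.e. truncate at primes of degree $\le D$, control the tail via $\sum_{\deg P>D}|P|^{-2k}\to0$, and establish tightness, so that convergence of the finite-prime marginals lifts to the full variable.

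For part (2), with both $q,g\to\infty$, I would isolate the dominant fluctuation. Among all pairs $(k,P)$ the largest contribution comes from $k=2$ and the linear primes $P=x-a$, $a\in\F_q$: each contributes $-\log(1-\chi(x-a)q^{-2})=\chi(x-a)q^{-2}+O(q^{-4})$, giving $\sim q$ asymptotically independent, mean-zero, bounded summands of size $q^{-2}$, hence variance $\asymp q^{-3}$; all other terms (higher $k$, or $\deg P\ge2$) contribute $o(q^{-3})$ and the unstable error is $O(q^{-\sigma g})$. This explains the normalization $q^{3/2}$, which makes the variance tend to $1$. The probabilistic input is that as $q\to\infty$ the values $\{\chi(x-a)\}_{a\in\F_q}$ behave like independent symmetric $\pm1$ variables, since $\chi(x-a)$ is the quadratic character of $F_t(a)$ and the $F_t(a)$ are jointly near-uniform on $\F_q$ as the coefficients of $F_t$ vary. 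One then verifies the Lyapunov condition, $\sum_a \mathbb E|\chi(x-a)q^{-2}|^3\asymp q\cdot q^{-6}=q^{-5}=o\bigl((q^{-3})^{3/2}\bigr)=o(q^{-9/2})$, and concludes that $q^{3/2}\mathfrak R_{(g,q)}$ converges to the standard Gaussian. The principal difficulty is uniformity in the double limit: the independence and moment estimates for $\{\chi(x-a)\}$ must hold with error terms uniform in both $q$ and $g$, and one must confirm that discarding the unstable strata, the $k\ge3$ terms, and the higher-degree primes remains negligible after multiplication by $q^{3/2}$ — this simultaneous control of $q$ and $g$ is the crux of the argument.
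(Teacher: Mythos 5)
Your reduction step coincides with the paper's: both pass from $\log N_q(M_{L_t}(n,d))-(n^2-1)(g-1)\log q$ to $\sum_{k=2}^n\log\zeta_{H_t}(k)$ via the Siegel formula and the unstable-strata bound of Proposition \ref{Pdom5.1}, and both extract the same deterministic shift and the $\delta_{\gamma/2}$ correction from the Euler factors at the finite places and at infinity. After that the routes genuinely diverge. The paper runs the method of moments from start to finish: it isolates $\bigtriangleup_Z(F_t)=\sum_{m\le Z}q^{-2m}m^{-1}\sum_{\deg f=m,\,f\neq\infty}\Lambda(f)\left(\tfrac{F_t}{f}\right)$, computes $\left\langle(\bigtriangleup_Z)^r\right\rangle=H(r)+O(q^{-Z})$ by the square-detection computation imported from \cite{ASA}, and then gets part (1) by converting the moments $H(r)$ into the characteristic function (Proposition \ref{H(r)1}, the analogue of Proposition 1 of \cite{XiZa}) and part (2) from the asymptotics $H(r)=\delta_{r/2}r!\,q^{-3r/2}/(2^{r/2}(r/2)!)+O_r(q^{-3(r+1)/2})$ plus the continuity theorem. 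You instead propose, for (1), finite-dimensional convergence of $(\chi(P_1),\dots,\chi(P_r))$ to independent symbols, factorization of the characteristic function over primes, truncation in degree, and L\'evy continuity; and, for (2), a Lyapunov CLT for the $q$ linear-prime summands $\chi(x-a)q^{-2}$. Your per-prime expectation is correct (it reproduces exactly the factor appearing in the stated $\phi$), your variance bookkeeping $q\cdot q^{-4}\asymp q^{-3}$ explains the normalization $q^{3/2}$, and the uniform tail bound $\sum_{\deg P>D}|P|^{-2}\to0$ does make truncation and tightness easy; so for part (1) your route is viable and arguably more transparent. What the moment method buys is a single rigorous engine that also controls the joint behaviour of a \emph{growing} family of symbols, which is exactly what your part (2) needs.

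Two concrete gaps. First, in part (1) you silently replace the full variable $\sum_{k=2}^n\sum_P-\log\left(1-\chi(P)|P|^{-k}\right)$ by its $k=2$ piece: done consistently, your per-prime factor must be $\mathbb{E}\left[\prod_{k=2}^n\left(1-\chi(P)|P|^{-k}\right)^{-i\tau}\right]$, not $\mathbb{E}\left[\left(1-\chi(P)|P|^{-2}\right)^{-i\tau}\right]$. When $q$ is fixed, the $k\ge3$ factors contribute a nondegenerate (if tiny) random component that cannot be discarded, and the resulting characteristic function agrees with the one in the statement only when $n=2$. This is not only your problem: the paper buries the $k\ge3$ contribution in the error term $O\left(q^{-2}(1+\log Z)\right)$ of \eqref{error5.1}, which is never shown to vanish in the fixed-$q$ limit; your factorized approach actually exposes this defect rather than hiding it, but as written it does not prove the stated $\phi$ for $n\ge 3$.

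Second, in part (2) the assertion that $\{\chi(x-a)\}_{a\in\F_q}$ behave like independent symmetric $\pm1$ variables, with errors uniform in both $q$ and $g$, is precisely the hard content, not an input you may quote. Since the number of summands grows with $q$, finite-dimensional convergence of the symbols is insufficient; a rigorous Lyapunov (or Lindeberg) argument requires quantitative bounds on mixed averages $\left\langle\prod_{a\in S}\chi(x-a)\right\rangle$ over $\H_{\gamma,q}$, which are small exactly because $\prod_{a\in S}(x-a)$ is squarefree and not a perfect square — the square-detection estimate. Supplying that input is, in substance, the moment computation the paper performs (and borrows from \cite{ASA} and \cite{XiZa}); your CLT route repackages the conclusion of that computation but does not bypass it.
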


\begin{rmk}\label{rmk1.2}
	If we put $n=2$ and $d=1,$ then Theorem \ref{thm1.3} and Theorem \ref{thm1.4} are precisely Theorem $1.1$ (more accurately Remark $1$) and Theorem $1.2$ in \cite{ASA} for the case $M_{L}(2,1).$
	\end{rmk}

\begin{rmk}\label{rmk on quotient}
	Let $T_{n}$ be the group of $n$-division points of the jacobian $J_{X}.$ Suppose the characteristic of the field $F_{q}$ is co-prime with $n.$ 
	Then over $\F_{q},$ the group scheme $T_{n}$ acts on the moduli space $M_{L}(n,d)$ by tensorisation.
	Now $T_{n}$ being finite, the quotient $M_{L}(n,d)/T_{n}$ exists, and is a connected component of projective $PGl(n)$-bundles and is a projective scheme. Further from \cite[Theorem 2]{HaNa}, it follows that
	\[
	\w=N_{q}\bigg( M_{L}(n,d)/T_{n} \bigg).
	\] 
	Using the above identity, Theorem \ref{thm1.3} and Theorem \ref{thm1.4} can be proved mutatis-mutandis for the quotient $M_{L}(n,d)/T_{n}$.
	
\end{rmk}

Now, if we look at the case when rank and degree of the vector bundle are not coprime, the moduli space may not  be smooth. In particular, if we see the moduli space $M(2,0),$ this is smooth only when genus of the curve $X$ is two [cf. \cite{NaRa1}]. For genus $\geq 2$, Seshadri constructed a natural desingularisation (moduli theoretic) of $M(2,0)$ over any algebraically closed field $k,$ having characteristic other than $2$ (cf.\cite{Se}).
He constructed a smooth projective variety $N(4,0),$ whose closed points corresponds to the $S$-equivalence classes of parabolic stable vector bundles of quasi-parabolic type (4,3) together with small weights $(\alpha_1,\alpha_2)$ such that the underlying bundles are semi-stable of rank $4$ and degree $0$ semi-stable and having the endomorphism algebras as specialisations of $(2\times 2)$- matrix algebras.The natural desingularisation map from $N(4,0)$ to $M(2,0)$ is an isomorphism over $M^{s}(2,0)$ (see [Theorem 2, \cite{Se}] for more details). 
Furthermore, restricting on the subvariety of $N(4,0)$ whose closed points corresponds to the $S$-equivalence classes of parabolic stable vector bundles with the determinant of underlying bundle isomorphic to $\mathcal{O}_{X},$ is the desingularisation of $M_{\mathcal{O}_{X}}(2,0)$ (cf. Remark 1 in \cite{B1}, Theorem $2.1$ in \cite{BaSe1}),
and we denote this moduli space by ${\widetilde {N}}$.

In this paper, over the family of hyperelliptic curves $H_t$ in $\H_{\gamma,q}$, we study the distribution of the $\F_{q}$-rational points on $M^{s}_{\mathcal{O}_{H_{t}}}(2,0)$ and its Seshadri disingularisation model ${\N}.$ We have following similar results for these moduli spaces:

\begin{thm}\label{thm1.5}
	(1). If $q$ is fixed and $g\rightarrow\infty,$ then for $H_{t}\in \hh,$ the quantity 
	\[\log N_{q} (M^{s}_{\mathcal{O}_{H_{t}}}(2,0))  -3(g-1)\log{q}-\log{\left(\frac{q^3}{(q-1)^2(q+1)}\right)}+\delta_{\gamma/2}\log(1- 1/q^{2})\] 
	\noindent
	converges weakly to a random variable $\mathcal{R},$ whose characteristic function $\phi(\tau)=\mathbb{E}(e^{i\tau\mathcal{R}}) $ is given by 
	\begin{eqnarray*}
		\phi(\tau)&=&
		 1+ 
		 \sum\limits_{r=1}\limits^{\infty}
		 \frac{1}{2^{r}r!}\sum\limits_{\substack{P_{j}\text{distinct}\\1\leq j\leq r}}
		 \prod\limits_{j=1}\limits^{r}\left(\frac{(1-\mid P_{j} \mid^{-2})^{-i\tau}
		 	+ (1+\mid P_{j} \mid^{-2})^{-i\tau}-2}{(1+ \mid P_{j}\mid ^{-1})}\right),  
	\end{eqnarray*}
	for all real number $\tau.$
	
	(2). If both $q, g\rightarrow\infty,$ then for $H_{t}\in \hh,$ the random variable
	\[q^{3/2}\left(\log N_{q}( M^{s}_{\mathcal{O}_{H_{t}}}(2,0)) -3(g-1)\log{q}\right)\] is distributed as a standard Gaussian. More precisely, for any $z\in \mathbb{R},$ we have, 
	\begin{equation*}
	\begin{split}
	\lim\limits_{\substack{q\rightarrow\infty\\g\rightarrow\infty}}\frac{1}{\#\hh}\#\left\lbrace H_{t} \in \hh:q^{3/2}\left(\log N_{q}( M^{s}_{\mathcal{O}_{H_{t}}}(2,0) )  -3(g-1)\log{q}\right)\leq z\right\rbrace \\
	= \frac{1}{\sqrt{2\pi}}\int_{-\infty}^{z}e^{-\tau^{2}/2}d\tau.
	\end{split}
	\end{equation*} 
\end{thm}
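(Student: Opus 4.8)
The plan is to follow the template that produces Theorem~\ref{thm1.4} for $M_{L}(2,1)$, the only genuinely new ingredient being a Siegel-type count in the non-coprime, trivial-determinant situation. First I would write an exact expression for $N_q(M^s_{\mathcal{O}_{H_t}}(2,0))$ by combining the Siegel formula \eqref{Si2.1} (the Tamagawa mass of $SL_2$-bundles) with the Harder--Narasimhan recursion, isolating the stable locus as the total mass minus the unstable and the strictly semistable contributions. In degree zero the strictly semistable stratum --- the $S$-equivalence classes of $L_1\oplus L_1^{-1}$ with $L_1\in J_{H_t}$, together with their non-split extensions and the enlarged automorphism groups at the $2$-torsion points --- must be counted with the care carried out in \cite{BaSe1}; this is precisely the data that the Seshadri model $\N$ desingularises. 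Granting that count, the outcome should be that, after dividing by $q^{3(g-1)}$, the leading deterministic factor is $q^3/((q-1)^2(q+1))$ and the only fluctuating factor is the special value $P_{H_t}(q^{-2})$ of the numerator of $\zz$, exactly as in the degree-one case, while the unstable and strictly semistable corrections are $O(q^{-\sigma g})$ and drop out in the limit.

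Second, I would pass to the hyperelliptic family and make the fluctuation explicit. Writing $\zz(s)=P_{H_t}(q^{-s})/((1-q^{-s})(1-q^{1-s}))$ and relating the numerator $P_{H_t}$ to $L(u,\chi_t)$ for the quadratic character $\chi_t$ attached to $F_t$ (up to the local factor at infinity recorded by $\delta_{\gamma/2}$), the quantity in part (1), after subtracting $\log(q^3/((q-1)^2(q+1)))$ and the parity term $\delta_{\gamma/2}\log(1-q^{-2})$, equals $-\sum_{P}\log(1-\chi_t(P)|P|^{-2})+o(1)$, the sum running over monic irreducibles $P\in\F_q[x]$. The key probabilistic input is that, as $F_t$ is drawn uniformly and $g\to\infty$, the values $\{\chi_t(P)\}$ over any finite set of distinct primes become independent, with $\chi_t(P)=\pm1$ each with probability $|P|/(2(|P|+1))$ and $\chi_t(P)=0$ with probability $1/(|P|+1)$. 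Truncating the (absolutely convergent, since $|P|^{-2}$ is small) sum at primes of bounded degree, letting $g\to\infty$ to secure independence, and then removing the truncation, the characteristic function factors as an Euler product
\[
\phi(\tau)=\prod_{P}\mathbb{E}\big[(1-\chi_t(P)|P|^{-2})^{-i\tau}\big]
=\prod_{P}\Big(1+\tfrac{1}{2}\,\frac{(1-|P|^{-2})^{-i\tau}+(1+|P|^{-2})^{-i\tau}-2}{1+|P|^{-1}}\Big),
\]
and expanding the product as a sum over finite sets of distinct primes yields precisely the stated double series. This reproduces verbatim the characteristic function of Theorem~\ref{thm1.4}(1) at $n=2$, which is the expected consistency.

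Third, for part (2) I would analyse the joint limit $q,g\to\infty$ through the rescaled characteristic function $\mathbb{E}[\exp(i\tau q^{3/2}\mathcal R)]=\prod_P \mathbb{E}[\exp(-i\tau q^{3/2}\log(1-\chi_t(P)|P|^{-2}))]$. The dominant primes are the $q$ monic linear ones, each with $|P|=q$; for these $-\log(1-\chi_t(P)q^{-2})=\chi_t(P)q^{-2}+O(q^{-4})$, so $q^{3/2}$ times the summand is $\chi_t(P)q^{-1/2}+O(q^{-5/2})$, whence $\mathbb{E}[\exp(i\tau\chi_t(P)q^{-1/2})]=1-\tau^2 q^{-1}/2+O(q^{-2})$. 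Summing the logarithms over the $q$ linear primes gives $-\tau^2/2+o(1)$, while the primes of degree $\ge 2$ contribute $O(q^{-1})$ after scaling and vanish; a Weil-bound estimate controls the error terms uniformly in $t$. Thus the rescaled characteristic function tends to $e^{-\tau^2/2}$, and L\'evy's continuity theorem delivers the standard Gaussian. Equivalently, $\mathrm{Var}(\mathcal R)\sim\sum_{\deg P=1}|P|^{-4}=q^{-3}$, so $q^{3/2}$ is the correct normalisation, and a Lindeberg--Feller argument applied to the $q$ independent bounded summands gives the same conclusion.

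The main obstacle I anticipate is the first step: obtaining a sufficiently clean formula for $N_q(M^s_{\mathcal{O}_{H_t}}(2,0))$ in degree zero. Unlike the coprime case, where every semistable bundle is stable and the only correction is the exponentially small unstable mass, here one must separate the strictly semistable locus, track the automorphism groups that arise as specialisations of $2\times2$ matrix algebras (exactly the structure encoded by $\N$), and verify that all of this collapses, after normalisation, to the \emph{same} constant $q^3/((q-1)^2(q+1))$ and the \emph{same} fluctuating factor $P_{H_t}(q^{-2})$ as in the degree-one computation. Once the reduction to $\log L(q^{-2},\chi_t)$ is established, the two distributional statements become formally identical to those already proved for $M_{L}(2,1)$, and parts (1) and (2) follow from the equidistribution and Gaussian-limit arguments sketched above.
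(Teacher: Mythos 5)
Your proposal is correct, and its geometric core is exactly the paper's proof: the paper's Proposition \ref{Ms2.1} is precisely your ``Siegel formula plus the strictly semistable count of \cite{BaSe1}'' step, giving
\begin{equation*}
N_q(M^s_{\mathcal{O}_{H_t}}(2,0)) \,=\, q^{3g-3}\zeta_{H_t}(2)-\frac{\left( q^{g+1}-q^2+q\right)}{(q-1)^2(q+1)}N_{q}(J_{H_t})-\frac{1}{2(q+1)}N_{q^2}(J_{H_t})+ \frac{2^{2g}}{2(q+1)},
\end{equation*}
and, as you predict, Propositions \ref{log-zeta5.3} and \ref{log_Jac5.1} make the correction terms relatively $O(q^{-g}(\log g)^{c'N})$, so that $\log N_q(M^s_{\mathcal{O}_{H_t}}(2,0)) - 3(g-1)\log q = \log\zeta_{H_t}(2)+o(1)$; since $\zeta_K(2)=q^3/((q-1)^2(q+1))$, subtracting the stated constant and the $\delta_{\gamma/2}$ term leaves exactly your $-\sum_P\log\left(1-\chi_t(P)|P|^{-2}\right)$ over the finite primes. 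Where you diverge is the probabilistic endgame. The paper at this point simply invokes Theorem 1.2 of \cite{ASA} (equivalently, the $n=2$ instance of the moment machinery behind Theorem \ref{thm1.4}): $r$-th moments of the truncated prime sum are computed via family-averaged character sums and shown to converge to $H(r)$, respectively to Gaussian moments as $q,g\to\infty$. You instead compute the characteristic function directly as an Euler product from the asymptotic independence of the $\chi_t(P)$; for part (1), with $q$ fixed, this is sound and arguably more transparent (truncation is justified by absolute convergence, and expanding the product reproduces the stated series). For part (2), however, both your product formula and the Lindeberg--Feller variant treat the $q$ linear-prime summands as genuinely independent, whereas independence is only asymptotic in $g$ for a \emph{fixed} finite set of primes; since $q$ and $g$ tend to infinity independently, the number of relevant primes grows with $q$ and this approximation must be quantified --- the per-curve Weil bound you invoke controls only the tail of the prime sum, not these family correlations. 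This is precisely what the moment method buys: each fixed $r$-th moment involves only $r$-tuples of primes, whose averages over $\hh$ are controlled uniformly in $q$. Since you yourself observe that after the reduction the statements are formally identical to Theorem \ref{thm1.4} at $n=2$, replacing the independence/Lindeberg--Feller step by that citation (as the paper does) closes this gap and completes your argument.
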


\begin{rmk}\label{rmk1.3}
	For the moduli space $M_{\mathcal{O}_{H_{t}}}(2,0),$ 
	when $q$ is fixed and $g\rightarrow\infty,$ then over the family $\hh,$ one can see that the random variable
	\[\log N_{q} (M_{\mathcal{O}_{H_{t}}}(2,0))  -3(g-1)\log{q}-\log{\left(\frac{q^3}{(q-1)^2(q+1)}\right)}+\delta_{\gamma/2}\log(1- 1/q^{2})\] 
	converges weakly to a random variable $\mathcal{R},$ whose characteristic function is the same $\phi(\tau)=\mathbb{E}(e^{i\tau\mathcal{R}}) $ as given in Theorem \ref{thm1.5}. Furthermore, when both $q, g\rightarrow\infty,$ then the random variable
	\[q^{3/2}\left(\log N_{q}( M_{\mathcal{O}_{H_{t}}}(2,0)) -3(g-1)\log{q}\right)\] is distributed as a standard Gaussian. 
	\end{rmk}

Finally for the smooth projective variety ${\widetilde{N}}$, over the family of hyperelliptic curves we get,

\begin{thm}\label{thm1.6}
	(1). If $q$ is fixed and $g\rightarrow\infty,$ then for {$H_t\in \hhh,$} the random variable
	\[\log{N_{q}({\widetilde{N}})} -(4g-4)\log{q} +\delta_{\gamma/2}\log(1- 1/q^{2})\]
	converges weakly to a random variable $\mathcal{R},$ whose characteristic function $\phi(\tau)=\mathbb{E}(e^{i\tau\mathcal{R}}) $ is given by 
	\begin{eqnarray*}
		\phi(\tau)&=& 
		1
		+
		 \sum\limits_{r=1}\limits^{\infty}
		 \frac{1}{2^{r}r!}
		 \sum\limits_{\substack{P_{j}\text{distinct}\\1\leq j\leq r}}
		 \prod\limits_{j=1}\limits^{r}
		 \left(
		 \frac{(1-\mid P_{j} \mid^{-1})^{-i\tau}
		 	+
		 	 (1+\mid P_{j} \mid^{-1})^{-i\tau}-2}{(1+ \mid P_{j}\mid ^{-1})}
	 	 \right),  \, 
	\end{eqnarray*}
for all $\tau$ in $\mathbb{R},$ where $P_{j}$'s are monic, irreducible polynomials in $\F_{q^2}[x].$

	(2). If both $q, g\rightarrow\infty,$ then for $H_{t}\in \hhh,$ 
	\[q\left(\log{N_{q}({\widetilde{N}})} -(4g-4)\log{q}\right)\]
	\noindent
	is distributed as a standard Gaussian. That is for any $z\in \mathbb{R},$ we have, 
	\begin{align*}
	&\lim\limits_{\substack{q\rightarrow\infty\\g\rightarrow\infty}}
	\frac{1}
	{\# \hhh}
	\#\left\lbrace 
	 {H_t \in \hhh}:
	q\left(\log{N_{q}({\widetilde{N}})} -(4g-4) )\log{q}\right)
	\leq z
	\right\rbrace
	= \frac{1}{\sqrt{2\pi}}\int_{-\infty}^{z}e^{-\tau^{2}/2}d\tau.
	\end{align*} 
\end{thm}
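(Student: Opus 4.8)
The plan is to reproduce, for the smooth model $\widetilde N$, the same three-step architecture that proves Theorem \ref{thm1.5}: first an exact point-count formula for $N_q(\widetilde N)$, then a reduction of the centred logarithm to a character sum over the family $\hhh$, and finally the probabilistic analysis of that sum --- a computation of its characteristic function for part (1) and a central limit theorem for part (2). The genuinely new ingredient is the first step; once $N_q(\widetilde N)$ is written in a product form, the remaining probability is essentially identical in spirit to the corresponding arguments for $M^{s}_{\mathcal O_{H_t}}(2,0)$.

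First I would assemble the point count. By Seshadri's construction \cite{Se} the desingularisation map $\widetilde N \to M_{\mathcal O_{H_t}}(2,0)$ is an isomorphism over the stable locus, while over the singular locus --- the image of the strictly semistable bundles $L\oplus L^{-1}$, i.e.\ the Kummer variety $J_{H_t}/\!\pm$ --- it replaces each point by an explicit exceptional fibre, with a degeneration at the two-torsion points of $J_{H_t}$. Using the fixed-determinant description of Balaji--Seshadri \cite{BaSe1}, I would sum the contributions of these strata and obtain $N_q(\widetilde N)$ as $q^{4g-4}$ times a finite product of local factors built from the zeta function of $H_t$. It is exactly the hypothesis $\mathrm{char}\,\F_q\neq 2$ together with the behaviour of the two-torsion subscheme of $J_{H_t}$ that forces the computation onto $\F_{q^2}$ and thereby explains why the polynomials $P_j$ in the statement range over $\F_{q^2}[x]$; the term $\delta_{\gamma/2}\log(1-1/q^2)$ is the correction governed by the parity of $\gamma$, that is by the ramification of the hyperelliptic cover at infinity.

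For part (1) I would take logarithms, extract the main term $(4g-4)\log q$, and rewrite $\log N_q(\widetilde N)-(4g-4)\log q+\delta_{\gamma/2}\log(1-1/q^2)$ as a sum $\sum_P X_P$ over monic irreducible $P\in\F_{q^2}[x]$, where the summand depends only on the quadratic character $\chi_t(P)\in\{0,\pm1\}$ attached to $H_t$: it is $0$ if $P$ is ramified, $-\log(1-|P|^{-1})$ if $\chi_t(P)=+1$, and $-\log(1+|P|^{-1})$ if $\chi_t(P)=-1$. As $g\to\infty$ with $q$ fixed, the standard equidistribution over the squarefree family shows that, for any finite set of distinct $P$, the events are asymptotically independent with $\mathbb P(\chi_t(P)=0)=(|P|+1)^{-1}$ and $\mathbb P(\chi_t(P)=\pm1)=\tfrac{|P|}{2(|P|+1)}$. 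Hence $\phi(\tau)=\mathbb E[e^{i\tau\mathcal R}]=\prod_P\mathbb E[e^{i\tau X_P}]$ with
\[
\mathbb E[e^{i\tau X_P}]=\frac{2|P|^{-1}+(1-|P|^{-1})^{-i\tau}+(1+|P|^{-1})^{-i\tau}}{2(1+|P|^{-1})}=1+\tfrac12\cdot\frac{(1-|P|^{-1})^{-i\tau}+(1+|P|^{-1})^{-i\tau}-2}{1+|P|^{-1}},
\]
and expanding the product over $P$ reproduces verbatim the series for $\phi(\tau)$ in the statement. This is the exact analogue of the computation for Theorem \ref{thm1.5}, the only differences being the exponent base $|P|^{-1}$ in place of $|P|^{-2}$ and the coefficient ring $\F_{q^2}[x]$.

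For part (2) I would study $q\bigl(\log N_q(\widetilde N)-(4g-4)\log q\bigr)$ as $q,g\to\infty$ together. Writing it again as a sum of the independent pieces $X_P$, the dominant contribution comes from the linear polynomials $P=x-a$, of which there are $\sim q^2$ over $\F_{q^2}$, each of size $|P|^{-1}=q^{-2}$; the resulting variance is of order $q^2\cdot q^{-4}=q^{-2}$, which is precisely what the factor $q$ rescales to order one, while the higher-degree $P$ and the centring term are shown to be negligible. A Lindeberg-type central limit theorem (or, equivalently, a check that the moments converge to those of the standard normal) then gives the Gaussian limit. The main obstacle is the first step: extracting the exact stratified point count for the singular space and its Seshadri desingularisation, since one must track carefully both the exceptional fibres over the Kummer variety and the degenerate two-torsion locus --- it is this geometry, rather than the by-now-routine probabilistic machinery, that produces the $\F_{q^2}$-structure and the precise local factors entering $\phi(\tau)$.
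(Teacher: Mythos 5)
Your overall architecture (exact point count, then reduction of the centred logarithm, then probability) is the same as the paper's, and the probabilistic half of your plan is sound: your formula for $\mathbb{E}[e^{i\tau X_P}]$ together with the product expansion does reproduce the stated $\phi(\tau)$, and your variance count ($\sim q^2$ linear polynomials over $\F_{q^2}$, each of size $|P|^{-2}=q^{-4}$) explains the normalising factor $q$ in part (2). In effect, however, you are re-proving Xiong--Zaharescu's theorem over the base field $\F_{q^2}$ (your ``asymptotic independence'' step is exactly their moment computation); the paper does not redo any of this, it simply invokes that theorem as Lemma \ref{XiZa2} with $q$ replaced by $q^2$.

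The genuine gap is in your first step, which is the only new content of this theorem, and your description of it is wrong in a way that would derail everything after it. You attribute the appearance of $\F_{q^2}$ to ``$\mathrm{char}\,\F_q\neq 2$ together with the behaviour of the two-torsion subscheme of $J_{H_t}$''. In the paper's proof the two-torsion locus $K_0$ contributes only the terms $2^{2g}N_{q}(R)+2^{2g}N_{q}(S)$ of \eqref{pointN2.2}, and these land in the negligible error $U_2=O(q^{7g/2})$. The $\F_{q^2}$-structure comes instead from the part of the exceptional locus $Y$ lying over the set $B$ of \eqref{defB}: points $\xi\oplus\xi^{-1}$ of the Kummer variety with $\xi$ defined over $\F_{q^2}$ but not over $\F_{q}$. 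By \eqref{pointY} and \eqref{AB2} this stratum contributes $\#(B)\,N_{q^2}(\mathbb{P}^{g-2})$ with $\#(B)=\tfrac{1}{2}(N_{q^2}(J_{H_t})-N_{q}(J_{H_t}))$, and this single term dominates the whole count, giving $U_1=\tfrac{1}{2}q^{2g-4}(1+O(1/q))\,N_{q^2}(J_{H_t})$. Consequently your claim that $N_q(\widetilde N)$ equals ``$q^{4g-4}$ times a finite product of local factors built from the zeta function of $H_t$'', in parallel with Theorem \ref{thm1.5}, is not what happens: unlike the case of $M^{s}_{\mathcal{O}_{H_t}}(2,0)$, the stable-locus term $q^{3g-3}\zeta_{H_t}(2)$ is itself absorbed into the error, and the entire fluctuation is $\log N_{q^2}(J_{H_t})-2g\log q$, i.e.\ the Jacobian count over $\F_{q^2}$. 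This identification (equation \eqref{desingular} and the lines following it) is precisely what allows the paper to conclude both parts by citing Lemma \ref{XiZa2}; without it, your sum $\sum_P X_P$ has no quantity to converge to, so the proposal as written does not prove the theorem.
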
	
	
	We remark here that, all the results on distribution presented in this article  are over the family of hyperelliptic curves $(N=2)$. 
	So, it will be interesting to see whether the proof of Theorem \ref{thm1.4},\ref{thm1.5} and \ref{thm1.6} discussed in section \ref{distribution M(n,d)}, \ref{distribution Ms} and \ref{distribution N tilde} can be generalized to a more general set up, that is to study the distribution over a family of non-hyperelliptic curves $(N\geq 3)$.

The layout of the paper is as follows: In the first part of section \ref{preli}, we recall the definitions of zeta function over curves and the Artin $L$-series over function fields. In the second part we recall some basic properties of vector bundles, parabolic vector bundles and moduli space. We briefly describle the $\F_{q}$-rational points over these moduli spaces. In Section \ref{distribution M(n,d)}, using induction on the rank of vector bundles, first we give a bound on the number of isomorohism classes of unstable vector bundles (see Proposition \ref{Pdom5.1}), and using this we prove Theorems \ref{thm1.3} and \ref{thm1.4}. The proofs of Theorem \ref{thm1.5} and Theorem \ref{thm1.6} are in Section \ref{distribution Ms} and \ref{distribution N tilde} respectively.\\

\noindent
\textbf{Notation:} The notation $f(y)=O(g(y)),$ or equivalently, $f(y)\ll g(y)$ for a non-negative function $g(y)$ implies that there is a constant $c$ such that $|f(y)|\leq cg(y)$ as $y\rightarrow \infty.$ The notation $f(y)=o(g(y))$ is used to denote that $\frac{f(y)}{g(y)} \rightarrow 0$ as $y \rightarrow \infty.$ We use the notation $\mathbb{G}_{m}$ to denote the multiplicative group. 
We use the notation $J_{X}^{d}$ to denote the isomorphism classes of line bundles of  degree $d$ on a curve $X$.

\section{Preliminaries}\label{preli}
In this section we quickly recall some basic definitions and record some results which will be used later.

\subsection{Zeta functions of curves}   
\label{zeta function basics}
Let $\F_{q}$ be a finite field with $q$ elements and $\overline{\F}_{q}$ be its algebraic closure. 
Let $X$ be a smooth projective geometrically irreducible curve of genus $g \ge 1$ over $\F_{q}$ and $ \bar{X}\,=\,X \times_{\F_{q}} \overline{\F}_{q}$.

Given any integer $r \,>\,0$, let $\F_{q^r}\,\subset \, \overline{\F}_{q}$ be the unique field extension 
of degree $r$ over $\F_{q}$. Let $N_r\,:=\, N_{q^{r}}(\bar{X})$ be the cardinality of the set of $\F_{q^r}$- 
rational points 
of $X$. Recall that the zeta function of $X$ is defined by 
\begin{equation}
Z_{X}(t)\,=\,\exp\left(\sum_{r >0} \frac{N_rt^r}{r}\right).
\end{equation}
By the Weil conjectures it follows that the zeta function has the form 
\begin{equation}\label{zeta}
Z_{X}(t)\,=\,\dfrac{\prod\limits_{l=1}\limits^{2g}\left(1-\sqrt{q}e(\theta_{l,X})t\right)}{(1-t)(1-qt)},
\end{equation}
where $e(\theta)=e^{2\pi i\theta}.$

Further assume that $X$ is a Galois cover of $\mathbb{P}^1$ with Galois group $G=Gal(R/K)$ of order $N$, where $R:=\F_q(X)$ is the function field of $X$ over $\F_q$ and $K:=\F_q(x)$, the rational function field. For a prime
$\mathfrak{P}$ of $R$, the norm denoted by $|\mathfrak{P}|$ is the cardinality of the residue field of $R$ at $\mathfrak{P}$. The zeta
function $\zeta_R(s)$ is defined by 
\begin{equation}\label{defzeta1.1}
\zeta_R(s)=\prod\limits_{\mathfrak{P} \in R} \left( 1-\frac{1}{|\mathfrak{P}|^s}\right)^{-1}.
\end{equation}
We know that, for the rational function field $K$, prime ideals are in a one-one correspondence with the prime ideals in the polynomial ring $\F_{q}[x],$ only with one exception, and that is the prime at infinity say $P_{\infty}.$ Here $P_{\infty}$ is the discrete valuation ring generated by $\frac{1}{x}$ in $\F_{q}[\frac{1}{x}]$ such that $\deg(P_{\infty})=1.$ By above definition in equation \eqref{defzeta1.1}, the zeta function $\zeta_K(s)$ becomes
\begin{equation*}
\zeta_{K}(s)=\left( 1-\frac{1}{|{P_{\infty}}|^s}\right)^{-1}\prod\limits_{{P}\in \F_{q}[x]} \left( 1-\frac{1}{|{P}|^s}\right)^{-1}=\left( 1-\frac{1}{|{P_{\infty}}|^s}\right)^{-1} \zeta_{\F_{q}[x]}(s).
\end{equation*}
Now $|{P_{\infty}}|=q^{\deg(P_{\infty})}=q.$ And 
\begin{equation*}
\zeta_{\F_{q}[x]}(s)=\sum\limits_{\substack{f \,\,\text{monic}\\ \text{in} \,\, \F_{q}[x]}}\frac{1}{\mid f \mid ^{s}}=\left( 1-q^{1-s}\right)^{-1}. 
\end{equation*}
Therefore,
\[ \zeta_{K}(s)=\left(1-q^{-s}\right)^{-1}\left(1-q^{1-s}\right)^{-1}.
\]
Since $X$ is a smooth projective curve, the zeta function of the curve coincides with
the zeta function of it's function field (see \cite{Ro} for details). More precisely,
\[
Z_X(q^{-s})=\zeta_R(s).
\]
Henceforth we would use $\zeta_R$ and $\zeta_X$ interchangeably to 
denote this zeta function. 
From \eqref{zeta}, we get
\begin{eqnarray}\label{zeta2}
\zeta_{X}(s)=
\frac{\prod\limits_{l=1}\limits^{2g}
	\left(1-\sqrt{q}e(\theta_{l,X})q^{-s}\right)}{(1-q^{-s})(1-q^{1-s})}
=\zeta_K(s)\prod\limits_{l=1}\limits^{2g}\left(1-\sqrt{q}e(\theta_{l,X})q^{-s}\right). 
\end{eqnarray}
Next we recall the Artin L-series for function fields (cf. \cite[Chapter 9]{Ro} for more details). For each prime $P$ of $K$ and a prime $\fP$ of $R$ lying above $P$, 
we denote the Inertia group and the Frobenious element by
$I(\fP/P)$ and $(\fP, R/K)$ respectively.

Let $\rho$ be a representation of $G= \Gal (R/K)$
\begin{center}
	$\rho\,:G\rightarrow\,Aut(V)$
\end{center}
where $V$ is a vector space of dimension $n$ over complex numbers.
Let $\chi$ denotes the character corresponding to $\rho$. For an unramified prime $P$ and $\Re(s)>1$ we define 
the local factor by 
\begin{center}
	$L_P(s,\chi,K)=\det(I -\rho((\fP, R/K))\mid P \mid^{-s})^{-1}$.
\end{center}
Let $\{\alpha_{1}(P), \alpha_{2}(P),....\alpha_{n}(P)\}$ be the eigenvalues of 
$\rho((\fP, R/K)).$ 
In terms of these eigenvalues, we can rewrite the above expression as
\begin{equation}
L_P(s,\chi,K)\,=\prod\limits_{i=1}\limits^{n}
(1-\alpha_{i}(P)\mid P \mid^{-s})^{-1}.
\end{equation}
We note that these eigenvalues $\alpha_{i}(P)$ are all roots of unity 
because $(\fP, R/K)$ is of finite order.

For a ramified prime $P$, the local factor is defined as 
\[
L_P(s,\chi,K)=det(I-\rho((\fP, R/K))_H\mid P\mid^{-s})^{-1}
\]
where $\rho((\fP, R/K))_H$ denote the action of Frobenious automorphism restricted to a subspace $H$ of
$V$ fixed by $I(\fP/P)$.

In either case, we can write 
\begin{equation}\label{local_factor}
L_P(s,\chi,K)=\prod\limits_{i=1}\limits^{n}(1-\alpha_{i}(P)\mid P \mid^{-s})^{-1}.
\end{equation}
where each $\alpha_{i}(P)$ is either roots of unity or zero. 
The Artin $L$-series $L(s,\chi,K)$ or simply $L(s,\chi)$ is defined by
\begin{equation}\label{Artin-factors}
L(s,\chi)= \prod\limits_{P}L_P(s,\chi,K)
\end{equation} 
It is known that if $\rho=\rho_{0}$, 
the trivial representation, then $L(s,\chi)=\zeta_{K}(s),$ and if $\rho=\rho_{reg}$, 
the regular representation, then $L(s,\chi)=\zeta_{R}(s).$

Finally let $\{\chi{_{1}}, \chi_{2}, ... ,\chi_{h}\}$ be the set of irreducible characters 
of the Galois group $G$ with $\chi_{1}= \,\chi_{0},$ the trivial character.
For $i=1,\cdots h$, let $T_{i}=\chi_i(1)$  
be the dimension of the representation space corresponding to 
$\chi_{i}$. 
Then using properties of characters and Artin $L-$series, we get
\begin{equation}\label{l-function-product}
\zeta_{R}(s)\,=\,\zeta_{K}(s)\prod\limits_{i=2}\limits^{h}L(s,\chi_{i})^{T_{i}}  .
\end{equation}

\subsection{Vector bundles and parabolic vector bundles} 
\label{vector bundle basics}
Let $X$ be a smooth projective geometrically irreducible curve of genus $g \ge 2$ over $\F_{q}$ and set $ \bar{X}\,=\,X \times_{\F_{q}} \bar{\F}_{q}$. A vector bundle $E$ on $X$ is a locally free sheaf of $\mathcal O_{X}$-modules of finite rank, where $\mathcal O_{X}$ is the structure sheaf. If $F$ is a subsheaf of a locally free sheaf $E$ for which the quotient $E/F$ is torsion free (and so locally free since $X$ is a curve), then $F$ is called a vector subbundle of $E.$ Let $\bar{E}\, =\,E \times_{\F_{q}}\bar{\F}_{q}$ be the extension of $E$ to $\bar{X}$ over $\bar{\F}_{q}$.	The rank of the sheaf is denoted by $\text{rank}(E)$. A rank one locally free sheaf is called an invertible sheaf or a line bundle. The degree $\text{deg}(E)$ of a rank $n$ vector bundle $E$ is the degree of it's $n$-th exterior power line bundle 
$\bigwedge^n(E)$ which is also known as determinant line bundle of $E$. For any non-zero vector bundle $E$, the slope of a vector bundle $\mu(E)$ is the rational number $\frac{\text{deg}(E)}{\text{rank}(E)}$. The vector bundle $\bar{E}$ is called \emph{stable} (resp. \emph{semistable}) if for all proper subbundles $\bar{F}( \neq\, 0, \, \bar{E})$ we have $\mu(\bar{F})\,< \,\mu(\bar{E}) $ (resp. $\mu({\bar{F}})\,\leq \,\mu(\bar{E})$), otherwise it is called \emph{nonsemistable} or \emph{unstable} vector bundle. Over the field $\F_{q},$ a vector bundle $E$ is called stable (resp. semistable) if the corresponding extended vector bundle $\bar{E}$ over $\bar{\F}_{q}$ is stable (resp. semistable). 

Though the vector bundle $E$ is defined over non-algebraically closed field $\mathbb F_q$, $E$ contains a uniquely determined flag of sub-bundles defined over $\mathbb F_q$ 
\begin{center}
	$0\,=\,F_{0}\,\subsetneqq \,F_{1}\,\subsetneqq...\subsetneqq\, F_{m} \,=\,E$
\end{center}
satisfying following numerical criterion (see [Definition 1.3.10 ,\cite{HaNa}]). 
\begin{enumerate}
	\item $(F_{i}/F_{i-1})$'s are semistable for $i=1,...,m$.
	\item $\mu(F_{i}/F_{i-1})>\mu(F_{i+1}/F_{i})$ for $i=1,...,m-1.$
\end{enumerate}  
This filtration is often called as Harder-Narasimhan (H-N) filtration or canonical  filtration  of $E$. The length of the unique flag corresponding to $E$ is called \textit{the length} of $E$ and we denote it by $l(E).$ Also, we use the notation $E^{\vee}$ to denote the dual of the vector bundle $E.$ Assume $X$ is defined over any algebraically closed field (for our purpose $\overline{\mathbb F}_{q}$). For any rational number $\mu,$ let $C(\mu)$ denote the Artinian category of semistable vector bundles on $X$ of slope $\mu.$ For any object $E$ in $C(\mu),$ there is a strictly increasing sequence of vector subbundles  
\begin{center}
	$0\,=\,F_{0}\,\subsetneqq \,F_{1}\,\subsetneqq...\subsetneqq\, F_{m} \,=\,E$
\end{center}
satisfying
\begin{itemize}
	\item $(F_{i}/F_{i-1})$'s are stable for $i=1,...,m$.
	\item $\mu(F_{1})=\mu(F_{2}/F_{1})=...=\mu(F_{m}/F_{m-1})=\mu(E).$ 
\end{itemize} 

Such a series is called a \emph{Jordan-H$\ddot{\text{o}}$lder filtration} of $E.$ The integer $m$ is called the length of the filtration and the direct sum $\bigoplus \limits_{i} F_{i}/F_{i-1}$ is called the associated grading and is denoted by $gr E.$ Moreover, all the Jordan-H$\ddot{\text{o}}$lder filtration of $E$ have the same length and the associated grading $grE$ is uniquely determined upto isomorphism. Unlike H-N filration the Jordan-H$\ddot{\text{o}}$lder fiiltration is only for semistable bundles defined over algebraically closed field.  Moreover, two semistable bundles $E_{1}$ and $E_{2}$ are called \emph{$S-$equivalent} if $gr E_{1} \cong gr E_{2}$ (cf. \cite{Le}, \cite{Se2}). 
We denote the $S$- equivalence class of a vector bundle $E$ by $[E].$

Next we quickly recall the definition of parabolic vector bundles over a smooth projective curve defined over an algebraically closed field  followed by \cite{MeSe} and \cite{Se}.  Let $V$ be a vector bundle on $X$ and $t$ be a closed point on $X.$ A parabolic vector bundle $(V,\Delta,\alpha_{*})$ with parabolic structure at $t$ is a vector bundle $V$, a quasi--parabolic structure 
$\Delta\,=\,(l_1,\cdots, l_r)$ of length $r$ together with parabolic weights $\alpha_*\,=\,(\alpha_{1},\alpha_{2}, \cdots, \alpha_{r})$ 
is given by following data: 
\begin{itemize}
	\item a flag on the fibre $V_{t}$ denoted by $\Delta$ given by linear subspaces $F^{i}V_{t}$ of $V_{t},$
	\begin{equation*}
	V_{t}= F^{1}V_{t}\supsetneq F^{2}V_{t}\supsetneq ...\supsetneq F^{r}V_{t}
	\end{equation*} 
	such that $\text{rank} (F^{i}V_{t})=l_{i}$ and $l_{i} > l_{i+1}$ for all $i=1,2,...,r$ and
	\item an $r$-tuple $\alpha_*\,=\,(\alpha_{1}, \alpha_{2}, .., \alpha_{r})$ called the \emph{weights} of the parabolic structure attached to \\
	\noindent
	$F^{1}V_{t}, F^{2}V_{t},...,F^{r}V_{t}$ respectively, such that $0\leq\alpha_{1}<\alpha_{2}...<\alpha_{r}<1.$

\end{itemize}

The \emph{parabolic degree} denoted by $\textit{par-deg}\, V$ of $V$ is defined by
\begin{equation*}
\textit{par-deg} V = \deg V + \sum\limits_{i} (l_{i}-l_{i+1})\alpha_{i},
\end{equation*}
and the parabolic slope denoted by $\textit{par-}\mu(V)$ is the ratio $\frac{\textit{par-deg} V}{\text{rank} V}.$

Now, suppose $W_*\,=\,(W, \Delta',\alpha'_{*})$ and $V_*\,=\,(V,\Delta,\alpha_{*})$ are two parabolic vector bundles on $\bar{X}$ with parabolic structures at $t \in \bar{X}.$ Suppose $\Delta'$ and $\Delta$ are of length $m$ and $n$ respectively.  We say that $W_*$ is a parabolic subbundle of $V_*$ if following holds: 
\begin{enumerate}
\item{} $W$ is a subbundle of $V$ in the usual sense, and $\alpha'_*$ is a proper subset of $\alpha_*$ 
\item{} Moreover,  
for given $1\leq i_{0}\leq m,$ consider the greatest $1\le \, j_{0}\,\le n$ such that $F^{i_{0}}W_t \subset F^{j_{0}}V_t$ and $F^{i_{0}}W_t \,\nsubseteq \, F^{j_{0}+1}V_t$, then $\alpha'_{i_0}\,=\, \alpha_{j_0}$. 
\end{enumerate} 
A parabolic vector bundle $V$ is called parabolic stable (resp. semistable) if for all proper parabolic subbundles $W( \neq\, 0, \, V)$ of $V,$ we have $\text{par}\,\mu(W)\,< \,\text{par}\,\mu(V) $ (resp. $\leq $).
 
 For our purpose, we will be considering quai-parabolic structure $\Delta$ of the type $\left( 4,3\right) $ with small weights $(\alpha_{1}, \alpha _2)$ at a point $t \in X$ on a rank $4$ vector bundle $V$. 
So, the parabolic degree of $(V,\Delta,\alpha_*)$ is given by $\textit{par-deg} V=\deg V+\alpha_1+3\alpha_2$.
Let $W$ be a subbundle of $V$ of rank $r$, then $W$ acquires a canonical structure of a parabolic subbundle of $V$ as follows:\\
\noindent
{\bf Case I:} If $W_t\not\subset F^2V_t$. In this case we set, $$W_t=F^1W_t\supset F^2W_t \text{ and } F^2V_t\cap W_t=F^2W_t,$$ where weight of $F^1W_t=\alpha_1$ and $F^2W_t=\alpha_2$. In this case, $\text{rank}(F^2W_t)=\text{rank}(W)-1$ and \\ $\textit{par-deg} W= \deg W+\alpha_1+(r-1)\alpha_2$.\\
\noindent
{\bf Case II:} If $W_t\subset F^2V_t$. In this case we set, $W_t=F^1W_t\supset F^2W_t \text{ and weight of}\,F^1W_t=\alpha_2.$. Hence,  $\textit{par-deg} W=\deg W+r\alpha_2$. \\

\subsection{Moduli space of vector bundles}

Now we assume $X$ is defined over $\mathbb F_{q}$ and $L$ be a line bundle on $X$ of degree $d$  defined over $\mathbb F_{q}$.   Let $M(n,d)$ (resp. $M^{s}(n,d)$) be the moduli space of S-equivalence classes of semistable (resp. stable) vector bundles of rank $n$ and
degree $d$ over on $X$. It is well known that $M(n,d)$ is a normal, irreducible projective variety of dimension $n^{2}
(g-1)+1.$ Further, if $(n,d) \,=\,1,$ then definition of stability and semistability coincides and  $M(n,d)$ is smooth.   
By going to a finite extension of $\mathbb F_q$ if required,  we can assume all our varieties are defined over $\mathbb F_q$.   
There is a natural surjection given by the determinant map, $$\text{det}:\, M(n,d) \rightarrow J_{X}^{d}$$ which sends any bundle $E \in M(n,d)$ to it's determinant bundle $\Lambda^{n}(E) $. The inverse image $\text{det}^{-1}(L)$ is denoted by $M_{L}(n,d)$ and is called the \emph{fixed determinant moduli space} of semistable bundles of rank $n$ and determinant $L$. For the case $(n,d) \,=\, 1,\, M_{L}(n,d)$ is an irreducible smooth projective variety of dimension $(n^{2}-1)(g-1).$ For detailed structure of these varieties one can see \cite{DeRa}, \cite{DeRa1}.


\subsection{$\F_{q}-$Rational points of moduli spaces}
\label{$M(2,0)$ basics}
In this paper we will be interested in counting $\F_{q}-$rational points of following three moduli spaces: 
\begin{enumerate}
\item{} Moduli space $M_L(n,d)$ of rank $n$ and degree $d$ vector bundles with fixed determinant $L$, when $n$ and $d$ are coprime.
\item{} Moduli space $M^s_{\mathcal O_X}(2,0)$ of rank $2$ and degree $0$ stable vector bundles with fixed determinant $\mathcal O_X$. 
\item{} Sehsadri desingularization $\N$ of $M_{\mathcal O_X}(2,0)$. 
\end{enumerate}

To compute $\F_{q}$-rational points of these moduli spaces, following theorem which is known as {\it Siegel's formula} (see section $2.3$ in \cite{HaNa}, Proposition $1.1$ in \cite{DeRa} ) will play an important role: 

\begin{thm*}[Siegel's formula]	\begin{equation}\label{Si2.1}
	\sum\limits_{E\in\cM} \dfrac{1}{N_{q}(\Aut E)}\,=\, \dfrac{1}{q-1}q^{(n^2-1)(g-1)}\zeta_{X}(2)\zeta_{X}(3)...\zeta_{X}(n).
	\end{equation}
where $\mathcal{M}_{L}(n,d)$ denotes the set of all isomorphism classes of rank $n$ vector bundles on $X$ defined over $\mathbb F_q$  with the fixed determinant $L$, and $\Aut E$ denotes the group scheme of automorphisms of $E$ defined over $\F_{q}$.  
\end{thm*}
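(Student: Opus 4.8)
The plan is to recognise the left-hand side as an adelic volume and to evaluate it through Weil's theorem on Tamagawa numbers, in the form that computes the groupoid cardinality of the stack of $SL_{n}$-bundles. Write $K=\F_{q}(X)$ for the function field, let $\mathbb A$ be its ring of adeles, and let $\mathbb O=\prod_{\fP}\mathcal O_{\fP}\subset\mathbb A$ be the maximal compact subring, the product running over all places $\fP$ of $K$ with residue field $\kappa(\fP)$ of size $|\fP|$. First I would invoke the standard dictionary: rank $n$ vector bundles on $X$ carrying a trivialisation of their determinant are classified by the double coset space $SL_{n}(K)\backslash SL_{n}(\mathbb A)/SL_{n}(\mathbb O)$, and under this bijection the stabiliser of the coset attached to $E$ records the determinant-preserving automorphisms of $E$. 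The overall factor $\tfrac{1}{q-1}=\tfrac{1}{|\F_{q}^{*}|}$ then accounts for the central scalar automorphisms $\F_{q}^{*}\subset\Aut E$ common to every vector bundle, which are not seen by the determinant-one adelic group; equivalently, it records the $\F_{q}^{*}$-torsor of determinant trivialisations lying over each fixed isomorphism class with $\det E\cong L$. Consequently, for a fixed Haar measure on $SL_{n}(\mathbb A)$,
\[
\sum_{E\in\cM}\frac{1}{N_{q}(\Aut E)}
=\frac{1}{q-1}\cdot\frac{\mathrm{vol}\big(SL_{n}(K)\backslash SL_{n}(\mathbb A)\big)}{\mathrm{vol}\big(SL_{n}(\mathbb O)\big)},
\]
the sum converging because $SL_{n}(K)\backslash SL_{n}(\mathbb A)$ has finite volume by reduction theory.

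For the numerator I would use Weil's theorem that the simply connected group $SL_{n}$ has Tamagawa number $\tau(SL_{n})=1$. The Tamagawa measure is assembled from a global invariant top-degree form on $SL_{n}$; comparing it with the counting normalisation above introduces a factor $q^{(n^{2}-1)(g-1)}$, where $n^{2}-1=\dim SL_{n}$ and the exponent $g-1$ is the Riemann--Roch bookkeeping in the measure, reflecting the self-duality of $\mathbb A$ with respect to a differential whose divisor is canonical of degree $2g-2$. Thus $\mathrm{vol}\big(SL_{n}(K)\backslash SL_{n}(\mathbb A)\big)=q^{(n^{2}-1)(g-1)}$ in the chosen normalisation.

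For the denominator I would compute the local volumes place by place. From $|SL_{n}(\F_{q'})|=q'^{\binom{n}{2}}\prod_{k=2}^{n}(q'^{k}-1)$ together with $\dim SL_{n}=n^{2}-1$, one finds at each place
\[
\mathrm{vol}\big(SL_{n}(\mathcal O_{\fP})\big)=\frac{|SL_{n}(\kappa(\fP))|}{|\fP|^{\,n^{2}-1}}=\prod_{k=2}^{n}\big(1-|\fP|^{-k}\big).
\]
Taking the product over all places and recalling $\zeta_{X}(k)=\prod_{\fP}\big(1-|\fP|^{-k}\big)^{-1}$ gives $\mathrm{vol}(SL_{n}(\mathbb O))=\prod_{k=2}^{n}\zeta_{X}(k)^{-1}$, so its reciprocal contributes exactly $\zeta_{X}(2)\zeta_{X}(3)\cdots\zeta_{X}(n)$. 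Assembling the three factors produces the asserted identity.

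The deep input, and the main obstacle, is the evaluation $\tau(SL_{n})=1$: this is the function-field instance of Weil's conjecture on Tamagawa numbers, a genuine theorem rather than a routine computation. A second, more technical obstacle is making the adelic bookkeeping precise, namely justifying convergence and the interchange of sum and volume, pinning down the measure normalisation so that the $q^{(n^{2}-1)(g-1)}$ and $\tfrac{1}{q-1}$ factors emerge with the right exponents, and identifying the coset stabilisers with the determinant-preserving subgroup of $\Aut E$. An alternative, more self-contained route that avoids Tamagawa numbers is the original Harder--Narasimhan argument, which extracts the formula from the residue at the relevant point of an Eisenstein series on $GL_{n}$ over $K$; it is this computation, mirroring Siegel's mass formula for quadratic forms, that justifies the name \emph{Siegel's formula}.
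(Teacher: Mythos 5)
The paper gives no proof of this statement at all --- it is quoted as known, with references to \cite{HaNa} (\S 2.3) and \cite{DeRa} (Proposition 1.1) --- and your Tamagawa-number argument is precisely the argument of those cited sources, matching them in substance and in detail: the dictionary between bundles and adelic double cosets, the input $\tau(SL_n)=1$, the local volumes $\mathrm{vol}(SL_n(\mathcal O_{\fP}))=\prod_{k=2}^{n}\bigl(1-|\fP|^{-k}\bigr)$ producing $\zeta_X(2)\cdots\zeta_X(n)$, and the factor $\frac{1}{q-1}$ coming from the global units $\F_q^{*}$ acting as scalar automorphisms. The one imprecision worth flagging is that for $\deg L = d \neq 0$ the double coset space $SL_n(K)\backslash SL_n(\mathbb A)/SL_n(\mathbb O)$ classifies pairs $(E,\,\varphi:\det E \cong \mathcal O_X)$ rather than bundles with determinant $L$ (one cannot reduce to this case by tensoring, since twisting shifts the degree by multiples of $n$); the standard fix, and the one implicit in the cited sources, is to replace $SL_n(\mathbb A)$ by the fiber of $\det$ over an idele representing $L$ --- a coset of $SL_n(\mathbb A)$ inside $GL_n(\mathbb A)$ --- after which your volume computation goes through verbatim.
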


\vspace{.5 cm}

\noindent
{\bf Case 1: $\F_{q}-$Rational points of $M_{L}(n,d)$}, when $(n,d)\,=\,1$: When $E$ is a stable bundle defined over $X$, we know that $\Aut(E) \simeq \mathbb{G}_{m}$ (cf. \cite{Ne}) over $\F_{q}$, where $\mathbb{G}_{m}$ denote the multiplicative group, and hence the Siegel's formula in \eqref{Si2.1} asserts that, 
\begin{equation}\label{Sigen2.2}
N_{q}(\M)\,=\, q^{(n^2-1)(g-1)}\zeta_{X}(2)\zeta_{X}(3)...\zeta_{X}(n)\,-\,\sum\limits_{E\in\Mus} \dfrac{q-1}{N_{q}(\Aut(E))}.                        
\end{equation} 
where $\Mus\,=\,\cM \setminus \M.$ We know that, for any unstable vector bundle $E$ in $\Mus,$ it admits a unique Harder-Narasimhan filtration by subbundles (again defined over $\mathbb F_q$) 
\[ 0\, =\,E_{0}\, \subsetneqq E_{1}\,\subsetneqq...\,\subsetneqq E_{m}\,=\,E.\]
\noindent
We denote the numbers as,
$d_{i}(E):=\deg(E_{i}/E_{i-1}),\, r_{i}(E):=rank(E_{i}/E_{i-1})$ and $\mu_{i}(E):=\mu(E_{i}/E_{i-1}).$
Let
\[HN(n_{1},n_{2},...,n_{m}):=\left\lbrace E \in \Mus\,\vert \,l(E)=m, \, \text{and}\, r_{i}(E)=n_{i}\, \text{for}\, i=1, 2, ...,m\right\rbrace ,\]
\noindent
and 
\begin{equation}\label{unschain}
C_{L}(n_{1},n_{2},...,n_{m})\,:=\, \sum\limits_{E \in HN(n_{1},n_{2},...,n_{m})}\A.
\end{equation}
Then we see that,
\begin{equation}\label{Palpha}
\sum\limits_{E\in\Mus}\frac{1}{N_{q}(\Aut E)}\,=\, \sum\limits_{\substack{(n_{1},n_{2},...,n_{m})\\\sum\limits_{i=1}^{m}n_{i}=n,\, m\geq 2}} C_{L}(n_{1},n_{2},...,n_{m}).
\end{equation}
\noindent
Suppose $\beta_{L}(n,d)\,:=\,\sum\A,$ where the summation extends over isomorphism classes of semistable vector bundles $E$ on $X$ defined over $\mathbb F_q$ of rank $n$ with determinant $L$. 
We recall Proposition $1.7$ in \cite{DeRa} here, which is as follows.
\begin{prop}\label{Pbeta}
	\begin{itemize}
		\item[(i)]$\beta_{L}(n,d)$ is independent of $L$ and hence may be written simply as $\beta(n,d).$
		\item[(ii)]\begin{equation}
		C_{L}(n_{1},n_{2},...,n_{m})\,=\, \sum\frac{(N_{q}(J_{X}))^{m-1}}{q^{^\chi \(\tiny{
				\begin{matrix}
				n_{1} & n_{2} &...& n_{m}\\
				d_{1} & d_{2} &...& d_{m}  
				\end{matrix}
			}\)}}\prod\limits_{i=1}^{m}\beta(n_{i},d_{i})
		\end{equation}
		\noindent
		where the summation extends over $(d_{1},d_{2},...,d_{m})\in \mathbb{Z}^{m}$ with $\sum\limits_{i=1}^{m}d_{i}=d$ and $\frac{d_{1}}{n_{1}}>\frac{d_{2}}{n_{2}}>...>\frac{d_{m}}{n_{m}}.$\\
		\noindent
		Here,
		\begin{equation*}
		\chi \(\tiny{
			\begin{matrix}
			n_{1} & n_{2} &...& n_{m}\\
			d_{1} & d_{2} &...& d_{m}  
			\end{matrix}
		}\)=\sum\limits_{i<j}(d_{i}n_{j}-d_{j}n_{i})+\sum\limits_{i<j}n_{i}n_{j}(1-g).
		\end{equation*}
	\end{itemize}
\end{prop}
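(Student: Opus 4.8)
The plan is to prove both parts simultaneously by induction on the rank $n$: I establish part (ii) in rank $n$ assuming part (i) in all ranks $<n$, and then deduce part (i) in rank $n$ from part (ii) together with Siegel's formula \eqref{Si2.1}. The base case $n=1$ is immediate, since there are no unstable line bundles (so (ii) is vacuous) and $\beta_{L}(1,d)=1/N_{q}(\Aut L)=1/(q-1)$ is visibly independent of $L$.

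For the inductive step towards (ii), fix $E\in HN(n_{1},\ldots,n_{m})$ with $m\geq 2$, so its canonical Harder--Narasimhan filtration $0=F_{0}\subsetneq\cdots\subsetneq F_{m}=E$ has semistable quotients $V_{i}:=F_{i}/F_{i-1}$ of rank $n_{i}<n$ and strictly decreasing slopes $\mu(V_{1})>\cdots>\mu(V_{m})$. The structural input I would use is the vanishing $\Hom(V_{i},V_{j})=0$ for $i<j$: a nonzero map $V_{i}\to V_{j}$ would have image simultaneously a quotient of $V_{i}$ and a subsheaf of $V_{j}$, forcing $\mu(V_{i})\leq\mu(V_{j})$, a contradiction. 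This vanishing guarantees at once that the filtration is preserved by every automorphism, so $\Aut E$ coincides with the automorphism group of the filtered bundle, and that the weighted count of filtered bundles with prescribed graded pieces is governed purely by extension groups among the $V_{i}$. Grouping the sum \eqref{unschain} first by the degrees $d_{i}:=\deg V_{i}$ (subject to $\sum_{i}d_{i}=d$ and $d_{1}/n_{1}>\cdots>d_{m}/n_{m}$) and then by the isomorphism types of the $V_{i}$, and building $E$ as an iterated extension $0\to F_{k-1}\to F_{k}\to V_{k}\to 0$, a Riedtmann--Green (Hall algebra) bookkeeping shows that the $\Aut$-weighted number of such $E$ with fixed graded pieces equals $\prod_{i}N_{q}(\Aut V_{i})^{-1}$ times $\prod_{k}|\mathrm{Ext}^{1}(V_{k},F_{k-1})|/|\Hom(V_{k},F_{k-1})|$. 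Each ratio equals $q^{-\chi(V_{k},F_{k-1})}$, where $\chi(A,B):=\dim\Hom(A,B)-\dim\mathrm{Ext}^{1}(A,B)$, and by Riemann--Roch $\chi(A,B)$ depends only on ranks and degrees; telescoping the exponents gives $q^{-\chi}$ with precisely the $\chi$ of the statement.

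It remains to sum $\prod_{i}N_{q}(\Aut V_{i})^{-1}$ over isomorphism types of the $V_{i}$ under the fixed-determinant constraint. Since determinants are multiplicative in exact sequences, $\det E=\bigotimes_{i}\det V_{i}$, so fixing $\det E=L$ imposes only the single condition $\bigotimes_{i}\det V_{i}=L$. For fixed degrees $d_{i}$ I would first fix a compatible tuple $(L_{1},\ldots,L_{m})\in\prod_{i}J_{X}^{d_{i}}(\F_{q})$ with $\bigotimes_{i}L_{i}=L$; summing $\prod_{i}N_{q}(\Aut V_{i})^{-1}$ over the $V_{i}$ semistable of rank $n_{i}$, degree $d_{i}$ and determinant $L_{i}$ produces $\prod_{i}\beta_{L_{i}}(n_{i},d_{i})=\prod_{i}\beta(n_{i},d_{i})$, by the inductive hypothesis (i) in ranks $n_{i}<n$. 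As $(L_{1},\ldots,L_{m})$ ranges over the solutions of $\bigotimes_{i}L_{i}=L$ --- of which there are exactly $N_{q}(J_{X})^{m-1}$, since $L_{1},\ldots,L_{m-1}$ are free and $L_{m}$ is then determined --- we pick up the factor $N_{q}(J_{X})^{m-1}$, yielding the formula of (ii). Finally, for (i) in rank $n$: the right-hand side of the now-established (ii) involves only $N_{q}(J_{X})$, $q$, $\chi$ and the lower-rank invariants $\beta(n_{i},d_{i})$, all independent of $L$, so each $C_{L}(n_{1},\ldots,n_{m})$ with $m\geq 2$ is independent of $L$; combining \eqref{Palpha} with Siegel's formula \eqref{Si2.1}, whose right-hand side is manifestly $L$-independent, exhibits $\beta_{L}(n,d)$ as an $L$-independent total minus the $L$-independent unstable contributions, closing the induction.

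The step I expect to be the main obstacle is the precise $\Aut$-weighted count of filtered bundles with fixed associated graded, that is, pinning down the factor $q^{-\chi}$ with all constants correct. This requires analysing how $\Aut E$ sits in the exact sequence $1\to\prod_{i<j}\Hom(V_{j},V_{i})\to\Aut E\to\prod_{i}\Aut V_{i}$ and verifying that the non-numerical contributions cancel, leaving only the Euler characteristic. The $\Hom$-vanishing for $i<j$ is exactly what makes this cancellation clean, while Riemann--Roch is what makes $\chi(A,B)$ a numerical invariant that can be pulled out of the sum over isomorphism types; interlocking these two facts correctly is the technical heart of the argument.
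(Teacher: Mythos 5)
The paper offers no proof of this proposition at all: it is imported verbatim from Desale--Ramanan [DeRa, Proposition 1.7], so there is no internal argument to compare against. Your proposal supplies a correct, self-contained proof, and its engine is precisely the technique the paper itself uses later in proving Proposition \ref{Pdom5.1}: there the first step of the HN filtration is split off, the map $\Phi:\Aut E\to \Aut E_{1}\times \Aut M$ is analysed with $\ker(\Phi)\cong H^{0}(\Hom(M,E_{1}))$, isomorphism classes of extensions are identified with orbits of $\Aut E_{1}\times\Aut M$ on $H^{1}(\Hom(M,E_{1}))$, and orbit--stabilizer yields the factor $q^{-\chi(M^{\vee}\otimes E_{1})}$, exactly your iterated two-step count leading to \eqref{induction1}. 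Your version is sound for the same reason: because the HN filtration is canonical (this is where $\Hom(V_{i},V_{j})=0$ for $i<j$ enters), the sub/quotient pair is intrinsically attached to $E$, so the Hall-algebra bookkeeping has no overcounting corrections, and additivity of the Euler characteristic telescopes the stagewise ratios $\lvert \mathrm{Ext}^{1}(V_{k},F_{k-1})\rvert/\lvert\Hom(V_{k},F_{k-1})\rvert$ to the stated $\chi$. Two points you should make explicit: first, the count of $N_{q}(J_{X})^{m-1}$ determinant tuples uses that $\# J_{X}^{d_{i}}(\F_{q})=N_{q}(J_{X})$ for \emph{every} degree $d_{i}$, which holds because the degree-$d_{i}$ Picard torsor is trivial over a finite field (Lang's theorem) --- the paper uses this silently too. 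Second, your induction is not circular: the Siegel formula \eqref{Si2.1} is independent input (Tamagawa number one for $SL_{n}$), its right-hand side is manifestly $L$-free, so deducing (i) in rank $n$ from (ii) in rank $n$ together with \eqref{Palpha} is legitimate; this gives a cleaner route to (i) than trying to prove $L$-independence directly by a twisting argument, which over $\F_{q}$ would founder on the non-divisibility of $J_{X}(\F_{q})$.
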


We end this section followed by the study of rank $3$ case. Based on the above proposition, 
we can compute 
\[
\sum\limits_{E\in \mathcal{M}_{\text{L}}^{\text{us}}(3,d)} \dfrac{1}{N_{q}(\Aut(E))}.
\]
This will be used as the first step in the induction hypothesis to prove Proposition \ref{Pdom5.1}. 
First note that, for any unstable vector bundle $E$ in $\mathcal{M}_{\text{L}}^{\text{us}}(3,d),$ it could be either in $HN(1,1,1),$ or $HN(1,2)$ or in $HN(2,1).$

\begin{prop}\label{unstable rank 3}
	 With the notation as above,
	 \begin{equation}\label{uns3}
	 C_{L}(1,1,1)= \frac{q^5\left( N_{q}(J_{X})\right)^{2} q^{3(g-1)}}{(q-1)^{3}(q^2-1)(q^3-1)},
	 \end{equation}
and
\begin{equation}\label{uns4.1}
C_{L}(2,1)=C_{L}(1,2)= \frac{q^{6}N_{q}(J_{X})q^{2(g-1)}}{(q-1)(q^6-1)}\left\lbrace \frac{2q^{3(g-1)}\zeta_{X}(2)}{(q-1)}-\frac{q^{g-1}N_{q}(J_{X})}{(q-1)^{3}(q+1)}-\frac{q^{g}N_{q}(J_{X})}{(q-1)^{3}(q+1)}\right\rbrace .
\end{equation}
\end{prop}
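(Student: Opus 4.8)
The plan is to derive all three identities directly from Proposition \ref{Pbeta}, which already reduces any $C_L(n_1,\dots,n_m)$ to the numbers $\beta(n_i,d_i)$, the factor $q^{-\chi}$, and powers of $N_q(J_X)$. So the first step is to record the two low-rank values of $\beta$ that feed the rank-$3$ case. For rank one the fixed-determinant condition leaves a single line bundle (namely $L$ itself) in each degree, with automorphism group $\mathbb{G}_m$, so $\beta(1,d)=\tfrac{1}{q-1}$, independent of $d$. For rank two I would invoke Siegel's formula \eqref{Si2.1} in rank $2$, $\sum_{E\in\mathcal M_L(2,d)}\tfrac{1}{N_q(\Aut E)}=\tfrac{1}{q-1}q^{3(g-1)}\zeta_X(2)$, and subtract the unstable contribution $C_L(1,1)$, which by Proposition \ref{Pbeta} (with $m=2$, $n_1=n_2=1$, so $\chi=(d_1-d_2)+(1-g)$) is a single geometric series in $q^{-(d_1-d_2)}$ over $d_1>d_2$, $d_1+d_2=d$. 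The constraint forces $d_1-d_2\equiv d\ (\mathrm{mod}\ 2)$, so the series depends only on the parity of $d$; summing it gives a closed form for $\beta(2,d)=\tfrac{q^{3(g-1)}\zeta_X(2)}{q-1}-C_L(1,1)$, and the two parities produce exactly the two $N_q(J_X)$-terms (with the $q^{g-1}$ and $q^{g}$ numerators) that reappear in \eqref{uns4.1}.

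With these in hand the three formulas become bookkeeping. For $C_L(1,1,1)$ I take $m=3$, $n_1=n_2=n_3=1$, so that $(N_q(J_X))^{m-1}=N_q(J_X)^2$, each $\beta(1,d_i)=\tfrac{1}{q-1}$, and a short computation gives $\chi=2(d_1-d_3)+3(1-g)$. Hence $C_L(1,1,1)=\tfrac{N_q(J_X)^2 q^{3(g-1)}}{(q-1)^3}\sum q^{-2(d_1-d_3)}$, the sum running over $d_1>d_2>d_3$ with $d_1+d_2+d_3=d$. I would evaluate this double geometric series by setting $a=d_1-d_2\ge 1$ and $b=d_2-d_3\ge 1$ and summing $q^{-2(a+b)}$, subject to the integrality restriction on $d_3=(d-a-2b)/3$ imposed by $\sum d_i=d$; collapsing this sum is what produces the closed form \eqref{uns3}.

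For $C_L(2,1)$ I take $m=2$, $(n_1,n_2)=(2,1)$, so $\chi=(d_1-2d_2)+2(1-g)$ and $\beta(1,d_2)=\tfrac{1}{q-1}$, giving $C_L(2,1)=\tfrac{N_q(J_X)q^{2(g-1)}}{q-1}\sum_{d_1/2>d_2}q^{-(d_1-2d_2)}\beta(2,d_1)$. Substituting the rank-two value from the first step, the $\zeta_X(2)$-part contributes one geometric series over the admissible range (yielding the $\zeta_X(2)$-term inside the braces of \eqref{uns4.1}), while the two parity-branches of $C_L(1,1)$ contribute the two $N_q(J_X)$-terms; re-indexing by the parity of $d_1$ in the range $d_1>2d/3$ and summing produces the prefactor $\tfrac{q^6 N_q(J_X)q^{2(g-1)}}{(q-1)(q^6-1)}$. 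For the identity $C_L(2,1)=C_L(1,2)$ I would not recompute the second series but use the duality $E\mapsto E^\vee$, which carries Harder--Narasimhan type $(2,1)$ to type $(1,2)$, preserves $1/N_q(\Aut E)$, and sends the determinant $L$ to $L^{-1}$; together with the independence of these counts from the determinant (Proposition \ref{Pbeta}(i)) and the symmetry $\beta(2,d)=\beta(2,-d)$, this matches the two series term by term.

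The routine inputs ($\chi$, the $\beta$-values, the duality) are immediate, so the only genuine work is the evaluation of the constrained geometric series. The delicate point is that the slope-ordering conditions together with $\sum d_i=d$ impose congruence and parity restrictions on the admissible degree vectors, so the series are not the naive products of one-variable geometric series: one must track precisely which residue classes of $(d_1,\dots)$ actually occur and check that the resulting boundary terms assemble into the stated rational functions of $q$. I expect essentially all the effort to go into this summation, and it is also the step most prone to error in the constants.
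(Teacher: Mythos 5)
Your strategy coincides with the paper's: reduce to Proposition \ref{Pbeta}, feed in $\beta(1,d)=\tfrac{1}{q-1}$ and $\beta(2,d)$ (Siegel minus $C_{L}(1,1)$, split by parity), then evaluate constrained geometric series. Your preliminary steps are correct and verifiable: the parity analysis of $C_{L}(1,1)$ does give $\tfrac{N_{q}(J_{X})q^{g-1}}{(q-1)^{3}(q+1)}$ for $d$ even and $\tfrac{N_{q}(J_{X})q^{g}}{(q-1)^{3}(q+1)}$ for $d$ odd, matching \eqref{beta(2,0)} and the value of $\beta(2,1)$ the paper imports from \cite{ASA}. The genuine gap is at the step you defer and yourself flag as ``most prone to error'': if you carry out the summations respecting the congruence restrictions you (rightly) insist on, you do \emph{not} arrive at \eqref{uns3} and \eqref{uns4.1}. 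Concretely, for $C_{L}(1,1,1)$ write $a=d_{1}-d_{2}\geq 1$, $b=d_{2}-d_{3}\geq 1$; the condition $\sum d_{i}=d$ forces $a-b\equiv d \pmod 3$, and for $d\equiv 1 \pmod 3$ (e.g.\ the coprime case $d=1$) the exact value is
\begin{equation*}
\sum_{\substack{a,b\geq 1\\ a-b\equiv d\,(3)}} q^{-2(a+b)}
=\frac{q^{-6}\left(1+q^{-2}+q^{-4}\right)}{\left(1-q^{-6}\right)^{2}}
=\frac{q^{2}}{(q^{2}-1)(q^{6}-1)},
\end{equation*}
which is smaller than the claimed $\tfrac{q^{5}}{(q^{2}-1)(q^{3}-1)}$ by the factor $q^{3}(q^{3}+1)$; for $3\mid d$ one gets yet another value ($q^{-4}+q^{-8}+q^{-12}$ in the numerator), so no $d$-independent closed form of the stated shape exists. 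The clean rational functions in the Proposition arise only if one evaluates sums like $\sum_{d_{1}>d/3}q^{-3d_{1}}$ and $\sum_{d_{2}>(d-d_{1})/2}q^{-2d_{2}}$ as geometric series starting at the real numbers $d/3$ and $(d-d_{1})/2$ --- i.e.\ precisely by discarding the integrality and strict-inequality constraints you propose to track. (This is in fact what the paper's own computation does; it is harmless downstream, since Proposition \ref{Pdom5.1} only needs upper bounds of this order, uniform in $g$, but it means the displayed identities are not exact and your more careful plan cannot terminate in them.)

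A second, independent gap is the duality argument for $C_{L}(2,1)=C_{L}(1,2)$. Dualization gives a bijection between bundles of Harder--Narasimhan type $(2,1)$ with determinant $L$ and degree $d$ and bundles of type $(1,2)$ with determinant $L^{-1}$ and degree $-d$; determinant-independence removes the $L^{-1}$ but not the degree flip, and the parity symmetry $\beta(2,d)=\beta(2,-d)$ does not repair it, because the slope constraint and the weights $q^{-\chi}$ pair the two parities of $\beta(2,\cdot)$ differently in the two sums. Exactly, for $d=1$ one finds
\begin{equation*}
C_{L}(2,1)=\frac{N_{q}(J_{X})q^{2(g-1)}}{(q-1)\left(1-q^{-6}\right)}\left(q^{-1}\beta(2,1)+q^{-4}\beta(2,0)\right),
\qquad
C_{L}(1,2)=\frac{N_{q}(J_{X})q^{2(g-1)}}{(q-1)\left(1-q^{-6}\right)}\left(q^{-2}\beta(2,0)+q^{-5}\beta(2,1)\right),
\end{equation*}
and since $\beta(2,0)$ and $\beta(2,1)$ are of the same size these differ by roughly a factor of $q$; your ``term by term'' matching fails at the first term, and the common value asserted in \eqref{uns4.1} (which carries $\beta(2,0)+\beta(2,1)$ with a single prefactor) agrees with neither. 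In summary: as a scheme for proving what the Proposition is actually used for --- two-sided bounds on $C_{L}(\cdot)$ of the correct order in $g$ and $q$ --- your approach is sound and even more scrupulous than the paper's; but as a derivation of the exact displayed equalities it has a genuine gap, and indeed no careful execution of it can close that gap, because the equalities themselves hold only up to bounded factors.
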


\begin{proof}
Using Proposition \ref{Pbeta}, we see that 
\begin{equation}\label{uns1}
C_{L}(1,1,1)\,=\, \sum\frac{(N_{q}(J_{X}))^{2}}{q^{\chi \(\tiny{
			\begin{matrix}
			1 & 1 & 1\\
			d_{1} & d_{2} &d_{3}
			\end{matrix}
		}\)}}\prod\limits_{i=1}^{3}\beta(1,d_{i}),
\end{equation}
where the summation extends over $(d_{1},d_{2},d_{3})\in \mathbb{Z}^{3}$ with $\sum\limits_{i=1}^{3}d_{i}=d$ and $d_{1}> d_{2} >d_{3}.$ Also,
\begin{equation*}
\chi \(\tiny{
	\begin{matrix}
	1 & 1 & 1\\
	d_{1} & d_{2} & d_{3}  
	\end{matrix}
}\)=2(d_{1}-d_{3}) + 3(1-g).
\end{equation*}
Since $\beta(1, d_{i})=\frac{1}{q-1}$ for $i=1,2,3,$ from \eqref{uns1}, we have 
\begin{equation}\label{uns2}
C_{L}(1,1,1)= 
\frac{
	\left( 
	N_{q}(J_{X})
	\right)^{2} 
	q^{3(g-1)}}{(q-1)^{3}}
\sum
\limits_{      d_{1}>d_{2}>d_{3}        }
\frac{1}{q^{2(d_{1}-d_{3})}
}.
\end{equation}
\noindent
Putting $d_{3}=d-d_{1}-d_{2}$, we see that,
\begin{eqnarray*}
	\sum\limits_{d_{1}>d_{2}>d_{3}}\frac{1}{q^{2(d_{1}-d_{3})}} &=& q^{2d}\sum\limits_{d_{1}>d_{2}>d-d_{1}-d_{2}}\frac{1}{q^{4d_{1}+2d_{2}}}\\
	&=& q^{2d}\sum\limits_{d_{1}>d_{2}}\frac{1}{q^{4d_{1}}}\sum\limits_{d_{2}>\frac{d-d_{1}}{2}}\frac{1}{q^{2d_{2}}}\\
	&=& \frac{q^{d}}{\left( 1-1/q^2\right) }\sum\limits_{d_{1}>\frac{d}{3}}\frac{1}{q^{3d_{1}}}\\
	&=& \frac{q^5}{(q^2-1)(q^3-1)}.
\end{eqnarray*}
Using this in \eqref{uns2}, we get \eqref{uns3}. 

Now, let $E$ be in $HN(2,1).$ Therefore, 
we have the Harder-Narasimhan filtration $ 0 \subsetneqq E_{1}  \subsetneqq E $, where $ r(E_{1})=2, r(E/E_{1})=1,$ and the subbundles $E_{1},$ and $E/E_{1}$ are semistable. Assume that $\deg(E_{1})= d_{1},$ and $\det(E_{1})=L_{1}.$ Therefore, $\deg(E/E_{1})=d-d_{1}$ and 
$\det(E/E_{1})=L \otimes L_{1}^{-1}.$ 
The Euler characteristic of $(E/E_{1}^\vee \otimes E_{1}),$ that is $ \chi(E/E_{1}^\vee \otimes E)\,=\, \chi \(\tiny{
	\begin{matrix}
	2 & 1\\
	d_{1} &d-d_{1}
	\end{matrix}
}\)=3d_{1}-2d + 2(1-g)$. 
Now, using Proposition \ref{Pbeta} we get,
\begin{eqnarray*}
	C_{L}(2,1)&=&\sum\limits_{\frac{d_{1}}{2}>d-d_{1}}\frac{N_{q}(J_{X})q^{2(g-1)+2d}}{q^{3d_{1}}}\beta(2,d_{1})\beta(1,d-d_{1})\\
	&=& \frac{N_{q}(J_{X})q^{2(g-1)+2d}}{q-1}\sum\limits_{d_{1}>\frac{2d}{3}}\frac{\beta(2,d_{1})}{q^{3d_{1}}}\\
	&=& \frac{N_{q}(J_{X})q^{2(g-1)+2d}}{q-1}\left\lbrace\beta(2,0)\sum\limits_{k>\frac{d}{3}}\frac{1}{q^{6k}} + \beta(2,1)\sum\limits_{k>\frac{2d-3}{6}}\frac{1}{q^{3(2k+1)}} \right\rbrace. \\
\end{eqnarray*} 
Therefore, 
\begin{equation}\label{uns4}
C_{L}(2,1)=
\frac{N_{q}(J_{X})q^{2(g-1)}}     {(q-1)\left(1-1/q^6 \right)}
\left\lbrace
\beta(2,0)
+ 
\beta(2,1)
\right\rbrace.
\end{equation} 
\noindent
Using \eqref{beta(2,0)} in \eqref{calbeta}, we see
\begin{equation*}
\beta(2,0)= \frac{q^{3(g-1)}}{(q-1)}\zeta_{X}(2)- \frac{ N_{q}(J_{X})q^{g-1}}{(q-1)^{3}(q+1)}.	
\end{equation*}
\noindent
Similarly, we compute $\beta(2,1)$ (cf. Proposition $2.2$ in \cite{ASA} for detailed computation) and putting the values of $\beta(2,0)$ and $\beta(2,1)$ in equation \eqref{uns4} we finally obtain \eqref{uns4.1}.

In a similar approach we compute $C_{L}(1,2)$ and one can see that it is equal to the quantity $C_{L}(2,1)$ . 
\end{proof}

\vspace{.5 cm} {\bf Case 2: $\mathbb F_q$-rational points of $M_{\mathcal{O}_{X}}^{s}(2,0)$}:

\noindent

Let $\mathcal{M}_{\mathcal{O}_{X}}(2,0) \left( \text{resp.}\,\mathcal{M}^{\text ss}_{\mathcal{O}_{X}}(2,0), \mathcal{M}^{\text us}_{\mathcal{O}_{X}}(2,0)\right) $ be the set of all isomorphism classes of rank $2$ vector bundles (resp.  semistable, unstable vector bundles) defined over $\mathbb F_q$ 
on $X$ with trivial determinant $\mathcal{O}_{X}$. 
For brevity we will denote them by $\mathcal M$ (resp. $\mathcal{M}^{\text {ss}}$, $\mathcal{M}^{\text {us}}$).  

As before, we denote the sum 
\begin{equation}\label{def:beta(2,0)}
\beta(2,0)\,:=\, \sum\limits_{E\in \mathcal{M}^{\text {ss}} }\frac{1}{N_{q}(\Aut(E))}
\end{equation}
and
\begin{equation*}\label{def:beta'(2,0)}
\beta'(2,0)\,:=\, \sum\limits_{E\in \mathcal{M}^{\text {us}}}\dfrac{1}{N_{q}(\Aut(E))}.
\end{equation*}
From the Siegel formula \eqref{Si2.1}, we have
\begin{equation}\label{calbeta}
\beta(2,0)+ \beta'(2,0)= \dfrac{q^{3g-3}}{q-1}\zeta_{X}(2).
\end{equation}
Now, for any $E$ in $\mathcal{M}^{\text us}_{\mathcal{O}_{X}}(2,0),$ it has the Harder-Narasimhan filtration of the type $HN(1,1)$. We have, 

\begin{equation}\label{beta(2,0)}		
	\beta'(2,0)=\frac{ N_{q}(J_{X})q^{g-1}}{(q-1)^{3}(q+1)} \,\,\,\, (\text{pg 7, eqn 4, \cite{BaSe1}}).
\end{equation}
\noindent

Since the stable bundles over $\mathbb F_q$ admit only scaler automorphisms, from \eqref{def:beta(2,0)}, we can write: 
\begin{equation}\label{betass}
\beta(2,0)\,=\, \frac{\u}{(q-1)} \,+\, \sum\limits_{E\in \mathcal{M}^{\text{ss}} \setminus \mathcal{M}^{\text{s}}}\frac{1}{N_{q}(\Aut (E))}.
\end{equation}
Now we try to compute the second term of the right hand side of \eqref{betass}. 
First we recall for convenience the definition of Kummer variety $K$ associated to the Jacobian $J_{\bar X}$, where $\bar X \,=\, X \times \bar{\mathbb {F}}_q$. In our setting  $K$ can be defined as follows: 

\[K:= \left\lbrace \text{Isomorphism classes of vector bundles of the form}\,\, \xi\oplus \xi^{-1} \right\rbrace ,\]
where $\xi$ is a degree zero line bundle over $\bar{X}$.  We have a canonical morphism $\phi:\, J_{\bar X} \, \rightarrow \, K$ defined by $ \xi \mapsto \xi \oplus \xi^{-1}$. 
It is known that $K$ has $2^g$ nodal singularities which we denote by $K_0$ and it is given by 
\begin{equation}\label{defK_{0}}
K_{0} :=\left\lbrace  \xi \oplus \xi: \xi^{2} \cong\mathcal{O}_{\bar{X}},\, \xi\in J_{\bar{X}}
\right\rbrace.
\end{equation}
Without loss of generalities we can assume all $\xi$ and the extensions are defined over $\mathbb F_q$. 
We also know that $M_{\mathcal O_{\bar X}}(2,0) \,- \,M^{s}_{\mathcal O_{\bar X}}(2,0) \cong K$ (cf. \cite{B1}). Moreover, for $E$ being a strictly semistable  vector bundle on $X$ defined over $\mathbb F_q$ with trivial determinant we have an exact sequence over $\bar{\mathbb F_{q}}$
\begin{equation}\label{extn1}
0\longrightarrow \xi\longrightarrow \bar{E}\longrightarrow \xi^{-1}\longrightarrow 0,
\end{equation}
 where $\bar{E}\,=\,E \times_{X} \bar{X}$ and the line bundle $\xi \in J_{\bar X}$ is uniquely determined. 
 
 From the above discussion, we get a surjective set theoretic map $\theta : \mathcal M^{ss}_{\mathcal O_{X}}(2,0)\setminus\mathcal M^{s}_{\mathcal O_{X}}(2,0) \rightarrow K$ which maps a semistable vector bundle $E$  \eqref{extn1} (defined over $\mathbb {F}_q$)  to $gr(\bar E)\,=\,\xi \oplus \xi^{-1}$. Apriori the object $gr(E)$ is defined over $\bar{\mathbb F}_q$ but since $E$ is defined over $\mathbb F_q$, one can show that (see [\S 3, \cite{BaSe1}]) both $\xi$ and $\xi^{-1}$ are defined either over $\mathbb F_q$ or $\mathbb F_{q^2}$. 
 
 Further one can show that (\S 3, \cite{BaSe1}),  
 \[
 \theta(\mathcal M^{ss}_{\mathcal O_{X}}(2,0) \,- \,\mathcal M^{s}_{\mathcal O_{X}}(2,0)) \,\, =\,\,A  \, \sqcup \, B \,\sqcup\, K_{0}.
 \]


Where $A$ and $B$ are defined as follows:
\begin{equation}\label{defA}
A:= \left\lbrace  \xi \oplus \xi^{-1} \in K\setminus K_{0} : \xi \,\text{and} \,\xi^{-1}\,\text{ are both defined over} \,\mathbb{F}_{q} \right\rbrace \,
\text{and}
\end{equation}
\begin{equation}\label{defB}
B:= \left\lbrace  \xi \oplus \xi^{-1}\in K\setminus K_{0}: \xi \,\text{and} \,\xi^{-1}\,\text{ are defined over $\mathbb {F}_{q^2} $ but not defined over} \,\mathbb{F}_{q} \right\rbrace.
\end{equation}
Clearly,

\begin{equation}\label{AB1}
{\#}(A)\,=\, \frac{1}{2}(N_q(J_{\bar{X}}) -2^{2g}), 
\end{equation}
and
\begin{equation}\label{AB2}
{\#}(B)\,=\,\frac{1}{2}(N_{q^2}(J_{\bar{X}}-J_0) - N_{q}(J_{\bar{X}}-J_0))
\,=\, \frac{1}{2}(N_{q^2}(J_{\bar{X}})-N_{q}(J_{\bar{X}})).
\end{equation}
\noindent


 We set,
\begin{equation}
\label{definition beta1}
\beta_{1}:=\sum\limits_{\substack{E\in \theta^{-1}(A) }}\frac{1}{N_{q}(\Aut(E)}\,+\, \sum\limits_{\substack{E\in \theta^{-1}(B) }}\frac{1}{N_{q}(\Aut(E))},
\end{equation}
and 
\begin{equation}
\label{definition beta2}
\beta_{2}:=\, \sum\limits_{\substack{E\in \theta^{-1}(K_0) }}\frac{1}{N_{q}(\Aut(E))}
\end{equation}
We have, 
\begin{equation}\label{def2:beta(2,0)}
\beta(2,0)\,=\, \frac{\u}{(q-1)}+ \beta_{1}+\beta_{2}.
\end{equation}
	Therefore, from \eqref{def2:beta(2,0)} and \eqref{calbeta}, we obtain
\begin{equation}\label{Ms1}
\u= q^{3g-3}\zeta_{X}(2)-\left\lbrace \beta'(2,0)+\beta_{1}+\beta_{2}\right\rbrace (q-1).
\end{equation}
We already know $\beta'(2,0)$. In order to know the left hand side we only need to know what is $\beta_1$ and 
$\beta_2$. Both the term $\beta_1$ and $\beta_2$ are computed in \cite{BaSe1}. While going through the proof given there we noticed in computation of $\beta_1$, they may have missed some term in consideration. Here, we first compute $\beta_{1}.$ We follow the same method as given in [Proposition 3.6, \cite{BaSe1}]. 
  
  Suppose $E\,\in\,\theta^{-1}(A)$, then there are two possibilities depending on the fact that the extension \eqref{extn1} being split or non-split. In the non-split case $\text{Aut}(E)\,=\, \mathbb G_m$  (over $\mathbb F_q$) and in the split case $\text{Aut}(E)\,=\, \mathbb G_m\,\times\, \mathbb G_m$  (over $\mathbb F_q$) (see [Lemma 3.3, \cite{BaSe1}). Also note that any  extension of $\xi$ with $\xi^{-1}$ 
 is semistable of degree $0$ and two such extension are isomorphic if and only if they are scaler multiple of each other in the extension space $H^1 (X,\xi^{-2})$ which is of dimension $g-1$.  Further if $\xi$ and $\xi'$ are not isomorphic then any two vector bundles $E \in H^1 (X,\xi^{-2})$ and $E' \in  H^1 (X,\xi'^{-2})$ are also non-isomorphic. 
All these facts together tells us that, 
  
  \[
  \sum\limits_{E\,\in\,\theta^{-1}(A)}\frac{1}{N_{q}(\Aut(E))}\,=\, \frac{\#(A)}{(q-1)^2} \,+\, \frac{2\#(A) N_q(\mathbb P^{g-2})}{(q-1)}.
  \]

  Now suppose $E \in \theta^{-1}(B)$. Then we know $ E \times _{\mathbb F_q} {F_{q^2}} \,=\, \xi \oplus \xi^{-1}$, where $\xi \in J_{\bar{X}} -J_0$ defined over $\mathbb F_{q^2}$. In this case $\text{Aut}(E) \,=\, \mathbb G_m $ over $\mathbb F_{q^2}$ [Lemma 3.5,\cite{BaSe1}]. Clearly if $E_1$ and $E_2$ are in $\theta^{-1}(B)$ and are not isomorphic over $\mathbb F_q$  then they split not ismorphically over $\mathbb F_{q^2}$. Hence, 
  
  \[
  \sum\limits_{E\,\in\,\theta^{-1}(B)}\frac{1}{N_{q}(\Aut(E))}\,=\, \frac{\#(B)}{(q^2-1)}
  \]
  
  Hence we have,
  \begin{equation}\label{Beta1}
  \beta_1\,=\,  \frac{\#(A)}{(q-1)^2} \,+\, \frac{2\#(A) N_q(\mathbb P^{g-2})}{(q-1)} \,+\, \frac{\#(B)}{(q^2-1)}.
  \end{equation}
  
  Now, from Proposition 3.1 in \cite{BaSe1}, we have
 \begin{equation}\label{Beta2}
 	\beta_{2}\,=\,\frac{2^{2g}}{N_{q}(GL(2))}+ \frac{2^{2g}N_{q}(\mathbb{P}^{g-1})}{q(q-1)}.
 	\end{equation}

Putting together the results from \eqref{beta(2,0)}, \eqref{Beta1}, and \eqref{Beta2} along with the size of $A$ and $B$ as in \eqref{AB1} and \eqref{AB2} in \eqref{Ms1} we obtain the following estimate of $\u.$
\begin{prop}\label{Ms2.1}
	The number of $\F_{q}$-rational points of the moduli space of stable bundles $\u$ is given by the following expression:
	\begin{align*}\label{Ms}
	\u= q^{3g-3}\zeta_{X}(2)-\frac{\left( q^{g+1}-q^2+q\right) }{(q-1)^2(q+1)}N_{q}(J_{X})-\frac{1}{2(q+1)}N_{q^2}(J_{X})+ \frac{1}{2(q+1)}2^{2g}.
	\end{align*}
\end{prop}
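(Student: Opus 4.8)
The plan is to prove the formula by a direct substitution into the master identity \eqref{Ms1}, namely
\[
\u \,=\, q^{3g-3}\z \,-\, (q-1)\bigl(\beta'(2,0) + \beta_1 + \beta_2\bigr),
\]
and then to simplify the right-hand side. Every ingredient here has already been made explicit: $\beta'(2,0)$ by \eqref{beta(2,0)}, the (corrected) value of $\beta_1$ by \eqref{Beta1}, and $\beta_2$ by \eqref{Beta2}. So the proof is really an organized algebraic computation, and the only external inputs I need are the elementary point counts
\[
N_q(\mathbb{P}^{g-1}) = \frac{q^g-1}{q-1}, \qquad N_q(\mathbb{P}^{g-2}) = \frac{q^{g-1}-1}{q-1}, \qquad N_q(GL(2)) = q(q-1)^2(q+1),
\]
together with the identifications $N_q(J_{\bar X}) = N_q(J_X)$ and $N_{q^2}(J_{\bar X}) = N_{q^2}(J_X)$, which hold because $J_X$ is already defined over $\F_q$ so that taking $\F_q$- or $\F_{q^2}$-points commutes with the base change to $\bar{\F}_q$.

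First I would substitute \eqref{AB1} and \eqref{AB2} for $\#(A)$ and $\#(B)$ and the point counts above into \eqref{Beta1} and \eqref{Beta2}. This rewrites $\beta_1$ and $\beta_2$ as rational functions of $q$ multiplying the three ``arithmetic'' quantities $N_q(J_X)$, $N_{q^2}(J_X)$ and $2^{2g}$; since $\beta'(2,0)$ is already a scalar multiple of $N_q(J_X)$, the entire bracket $\beta'(2,0)+\beta_1+\beta_2$ becomes a $\Q(q)$-linear combination of these three quantities. The cleanest way to finish is then to collect the coefficients of $N_q(J_X)$, of $N_{q^2}(J_X)$ and of $2^{2g}$ separately and to check, after multiplying by $-(q-1)$, that each matches the claimed expression.

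Two of these are comparatively painless. The coefficient of $N_{q^2}(J_X)$ arises solely from the $\#(B)$ term in $\beta_1$, so it is read off immediately. For the coefficient of $N_q(J_X)$ one combines the $q^{g-1}$-factor of $\beta'(2,0)$ with the $N_q(J_X)$-parts of $\#(A)$ and $\#(B)$ over the common denominator $(q-1)^2(q+1)$; after expanding $(q^2-1)$ and $(q-1)^2$ the numerator collapses to $q^{g+1}-q^2+q$, giving exactly the stated coefficient.

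The main obstacle — and the step where the correction to $\beta_1$ is genuinely relevant — is the coefficient of $2^{2g}$. Here three sources contribute: the split-extension term $\#(A)/(q-1)^2$ (precisely the term the earlier reference overlooked), the non-split term carrying $N_q(\mathbb{P}^{g-2})$, and both summands of $\beta_2$. Each of these individually involves the high powers $q^{g}$ and $q^{g-1}$, so the delicate point is to verify that, over the common denominator $2(q-1)(q+1)$, all the $q^{g}$ and $q^{g-1}$ contributions cancel, leaving only the constant $q-1$ in the numerator and hence a bracket coefficient $\tfrac{1}{2(q+1)}$. Checking this cancellation cleanly is the crux; once it is established, reassembling the three coefficients and multiplying the bracket by $-(q-1)$ produces the $2^{2g}$ term of the proposition and completes the computation of $\u$. (I would keep a careful eye on the sign at this last step, since it is exactly the place where the corrected versus uncorrected $\beta_1$ changes the outcome.)
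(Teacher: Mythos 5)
Your strategy is exactly the paper's own proof: the paper disposes of Proposition \ref{Ms2.1} in one sentence, by substituting \eqref{beta(2,0)}, \eqref{Beta1}, \eqref{Beta2} together with \eqref{AB1} and \eqref{AB2} into \eqref{Ms1}, which is precisely your plan. Your handling of two of the three coefficients is also correct: the $N_{q^2}(J_X)$ coefficient comes only from the $\#(B)$ term and is $\frac{1}{2(q+1)}$, and for the $N_q(J_X)$ coefficient the numerator over the common denominator $2(q-1)^2(q+1)$ is $2q^{g-1}+(2q^{g-1}-1)(q^2-1)-(q-1)^2=2\left(q^{g+1}-q^2+q\right)$, as you describe.

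The step you deferred as ``the crux'' is, however, exactly where the argument breaks. Carrying it out, and writing $N_q(\mathbb{P}^{g-1})/q=N_q(\mathbb{P}^{g-2})+1/q$, the coefficient of $2^{2g}$ in $(q-1)\left(\beta'(2,0)+\beta_1+\beta_2\right)$ equals
\begin{align*}
&-\frac{q-1}{2}\left(\frac{1}{(q-1)^2}+\frac{2N_q(\mathbb{P}^{g-2})}{q-1}\right)
+(q-1)\left(\frac{1}{q(q-1)^2(q+1)}+\frac{N_q(\mathbb{P}^{g-1})}{q(q-1)}\right)\\
&\qquad=-\frac{1}{2(q-1)}-N_q(\mathbb{P}^{g-2})+\frac{1}{q(q-1)(q+1)}+N_q(\mathbb{P}^{g-2})+\frac{1}{q}
=-\frac{1}{2(q-1)}+\frac{q}{(q-1)(q+1)}\\
&\qquad=\frac{q-1}{2(q-1)(q+1)}=\frac{1}{2(q+1)},
\end{align*}
so the cancellation of the $q^{g}$ and $q^{g-1}$ contributions happens just as you predicted, with numerator $q-1$ over $2(q-1)(q+1)$. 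But this $+\frac{1}{2(q+1)}$ sits inside $(q-1)\left(\beta'(2,0)+\beta_1+\beta_2\right)$, which enters $N_{q}(M^{s}_{\mathcal O_X}(2,0))$ with a minus sign by \eqref{Ms1}. The substitution therefore yields
\begin{equation*}
N_{q}(M^{s}_{\mathcal O_X}(2,0))= q^{3g-3}\zeta_{X}(2)-\frac{q^{g+1}-q^2+q}{(q-1)^2(q+1)}N_{q}(J_{X})-\frac{1}{2(q+1)}N_{q^2}(J_{X})-\frac{1}{2(q+1)}2^{2g},
\end{equation*}
i.e. the last term carries the opposite sign from the one in the statement. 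So your concluding claim, that reassembling the coefficients ``produces the $2^{2g}$ term of the proposition,'' is exactly the step that fails: the paper's own inputs \eqref{Beta1}, \eqref{Beta2}, \eqref{AB1}, \eqref{Ms1} force the minus sign, so either the sign printed in the proposition is a slip or one of those inputs is in error, and in neither case does your planned computation establish the statement as written. (The discrepancy is invisible in the later asymptotic applications, where the $2^{2g}$ term is negligible, but as an exact identity the two formulas differ.) Your parenthetical warning about watching the sign at the last step was pointing at precisely this issue; it needs to be resolved, not merely flagged.
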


%

\noindent
{\bf Case3: $\mathbb F_q$-rational points of $\N$}:
First we give a brief description of the Seshadri desingularisation model $N(4,0)$ for the moduli space $M(2,0)$ following Seshadri (\cite{Se}), and
for that, we introduce some notations here.
\begin{itemize}
	\item $V_{2} \, (\text{resp} \,\, V_{2}^{s}):=$ The category of rank $2$ degree $0$ vector bundles ( resp. stable vector bundles) on $\bar{X}$.\\
	
	\item $PV_{4}  \, (\text{resp}\,\,PV_{4}^{ss},\, PV_{4}^{s}):=$ The category of parabolic  vector bundles $(V, \Delta, \alpha_*)$ (resp. parabolic semistable vector bundles, parabolic stable vector bundles)  with the parabolic structure $\Delta$ at a fixed point $p$ in $\bar{X}$ of the type $(4,3)$ and parabolic weights $(\alpha_{1}, \alpha_{2})$ such that the underlying vector bundle $V$ is of rank $4$ and degree $0.$\\
	
	\item $PU^{ss}(4,0)$ (resp. $PU^{s}(4,0)$):= The moduli space of parabolic semistable vector bundles which is a scheme of finite type of dimension $4g,$ whose closed points corresponds to the $S-$ equivalence classes of parabolic  semistable (resp. parabolic stable) vector bundles in the category $PV_{4}^{ss}$ (resp. $PV_{4}^{s}$). This scheme corepresent the parabolic functor defined in \cite{MeSe}.
\end{itemize}
\noindent

\noindent
We choose the weights $(\alpha_1,\alpha_2)$ to be sufficiently  small so that we have:
\begin{enumerate}
	\item The notion of parabolic semistability and parabolic stability coincide.
	\item Parabolic stable implies the underlying vector bundle is semistable.
	\item If $(V,\Delta, \alpha_*)\in PV_4^{s}$ then for every sub-bundle $W$ of $V$ with the induced parabolic structure, we have $\text{deg}(W)<0$ implies $\textit{par-deg } W<0$.
\end{enumerate}
%
%
%
%
\noindent
For any $W$ in $V_{2}^{s},$ we get a unique (upto isomorphism) parabolic structure $\Delta$ of the type $(4,3)$ with weights $(\alpha_{1}, \alpha_{2})$ sufficiently small such that $(W\oplus W, \Delta(W), \alpha_*)$ is in $PV_{4}^{s}$ (see Proposition $1$ in \cite{Se}). Also using Proposition $1$ in \cite{Se}, we see that the association 
\begin{align*}
&V_{2}^{s}: \longrightarrow PV^{s}_{4}\\
&W \longmapsto (W\oplus W, \Delta, \alpha_* )
\end{align*}
\noindent
is a well define set theoretic injective map say, $\zeta_{2}^{s}.$ Moreover, the map $\zeta_{2}^{s}$ is the underlying map on closed points of a morphism of finite type between $M^{s}(2,0)$ and $PU^{s}(4,0)$ which sends any element $[W]$ in $M^{s}(2,0)$ to $[(W\oplus W), \Delta, \alpha_*]$ in $PU^{s}(4,0)$ (cf. Proposition $6.6$, \cite{TEVB}), and we denote it again by $\zeta_{2}^{s}$.\\
\noindent
Let $PR^s$ be a parabolic reduced quote scheme. We see that $PU^s(4,0)$ is the geometric quotient under a free action of $PSL(n,\F_{q})$ for a certain $n$ on $PR^s$. Let $q: PR^{s} \rightarrow PU^{s}(4,0)$ be this geometric quotient morphism. This is an open map. 
\noindent
Let $QR^{s}$ be the reduced closed subscheme in $PR^{s}$  whose closed points corresponds to the subset
\[\{(V, \Delta)\in PR^s\mid \dim(\End(V))=4\}. \]
Now let the scheme-theoretic image $q(QR^s)$ in $PU^s(4,0)$ is denoted by $RU^s(4,0).$ It can be shown that $RU^s(4,0)$ is closed subscheme in $PU^s(4,0)$ and  $PR^s$ being  reduced, $RU^s(4,0)$ is also reduced.\\
\noindent
Now let $NR^s$ be a maximal open subscheme in $QR^s$ whose closed points corresponds to the parabolic vector bundles such that the underlying semi- stable vector bundles have the endomorphism algebras ishomorphic to the $2 \times 2$ matrix algebras. We denote the image $q(NR^{s})$ by $N^{s}(4,0),$ which is an open subsheme of $RU^{s}(4,0)$ and let $N(4,0)$ denote the closure of $N^s(4,0)$ in $RU^s(4,0).$ Here we state the following result by Seshadri \cite{Se}.
\begin{thm}[Seshadri]\label{Seshadrithm}
	There is a natural structure of a smooth projective variety on $N(4,0),$ and there exist a canonical morphism $\pi_{2}: N(4,0) \rightarrow M(2,0)$ which is an isomorphism over $M^{s}(2,0).$ More precisely, if $(V,\Delta, \alpha_*)$ is in $N(4,0),$ then $gr(V)=W \oplus W,$ with $rank(W)=2$ and $W$ is a direct sum of line bundles of degree $0,$ and the morphism $\pi_{2}$ sends $(V, \Delta, \alpha_*)$ to $W$ in $M(2,0).$ Further $(V, \Delta, \alpha_*)$ is in $\pi_{2}^{-1}(M^{s}(2,0))$ if and only if $\,V=W\oplus W,$ where $W$ is in $M^{s}(2,0)$ or equivalently $\End(V)$ is isomorphic to $2\times 2$ matrix algebra.  
\end{thm}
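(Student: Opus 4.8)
The plan is to follow Seshadri's construction, realising $N(4,0)$ as a distinguished component inside the GIT quotient $PU^s(4,0)$ and establishing smoothness by a local deformation-theoretic analysis concentrated over the singular locus of $M(2,0)$. First I would transport all scheme structures through the geometric quotient $q: PR^s \to PU^s(4,0)$. Since $PR^s$ is smooth and the $PSL(n)$-action is free, the stratification of $PU^s(4,0)$ by the dimension of $\End(V)$ pulls back to a stratification of $PR^s$ by locally closed subschemes, and the closed subscheme $RU^s(4,0) = q(QR^s)$, its open part $N^s(4,0) = q(NR^s)$, and the closure $N(4,0)$ all inherit reduced structures from $PR^s$. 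The decisive geometric fact is that $N^s(4,0)$ is exactly the locus where $\End(V)$ is the full $2\times 2$ matrix algebra, equivalently $V \cong W \oplus W$ with $W$ stable of rank $2$ and degree $0$; this identifies $N^s(4,0)$ with $M^s(2,0)$ via the injective map $\zeta_2^s$ built earlier.

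Next, for the morphism $\pi_2$ I would use that every point of $N(4,0)$ has associated graded $gr(V) = W \oplus W$ with $W$ a polystable rank-$2$ degree-$0$ bundle, so that the set-theoretic assignment $(V,\Delta,\alpha_*) \mapsto W$ lands in $M(2,0)$. To promote this to a morphism of schemes I would produce a flat family: the universal parabolic bundle on $PR^s$ together with the relative $S$-equivalence (Jordan--H\"older) construction yields a classifying map to $M(2,0)$ that is $PSL(n)$-invariant and hence descends along $q$. That $\pi_2$ restricts to an isomorphism over $M^s(2,0)$ is then immediate: $\zeta_2^s$ furnishes a two-sided inverse over the stable locus, and the characterisation $\End(V) \cong 2\times 2$ matrices forces the fibre over a stable $W$ to be the single reduced point $W \oplus W$.

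The heart of the argument, and the main obstacle, is smoothness of $N(4,0)$ along the exceptional locus $\pi_2^{-1}(K)$ lying over the singular Kummer variety, where $W = \xi \oplus \xi^{-1}$ is only polystable. Over $M^s(2,0)$ smoothness is clear from the identification with the smooth variety $M^s(2,0)$, so the entire difficulty sits at the boundary. There I would compute the Zariski tangent and obstruction spaces as parabolic hypercohomology groups $H^1$ and $H^2$ of the complex of parabolic endomorphisms of $(V,\Delta,\alpha_*)$, and show that for sufficiently small weights $(\alpha_1,\alpha_2)$ the parabolic flag of type $(4,3)$ rigidifies precisely the automorphism directions responsible for the singularities of $M(2,0)$, forcing the obstruction map to vanish. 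Equivalently, one must check that the local model near a boundary point is a smooth space of parabolic extensions; controlling how $\End(V)$ degenerates as one approaches the boundary --- so that $N(4,0)$ is the smooth closure of $N^s(4,0)$ rather than acquiring singularities --- is the delicate point on which the whole theorem turns.
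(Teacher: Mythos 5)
First, a point of comparison that matters here: the paper does not prove this statement at all. It is quoted as an external result --- the theorem carries Seshadri's name, the text introducing it reads ``Here we state the following result by Seshadri \cite{Se}'' --- and the construction preceding it (the schemes $PR^s$, $QR^s$, $NR^s$, the quotient map $q$, and the map $\zeta_2^s$) is only recalled so that the statement can be formulated and then used to count $\F_q$-rational points of $\N$. So there is no internal proof to measure your proposal against; you have effectively set out to reprove Seshadri's theorem, and your sketch must be judged on its own.

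Judged that way, it has two genuine gaps, and they sit exactly at the two hard points of the theorem. First, to define $\pi_2$ you take as given that every point of the closure $N(4,0)$ has $gr(V) = W \oplus W$ with $W$ polystable of rank $2$ and degree $0$. That is not an input; it is part of the conclusion being proved. On the open piece $N^s(4,0)$ one knows $V \cong W \oplus W$ with $\End(V) \cong M_2$ by construction, but for a limit point in the closure one must determine which parabolic bundles can arise as degenerations, and Seshadri's argument does this by showing that $\End(V)$ at such a point is a \emph{specialization of the matrix algebra} $M_2$ and then analyzing what such specializations force on $gr(V)$. Your sketch silently replaces this analysis with its outcome, which makes the construction of $\pi_2$ (and the ``more precisely'' clauses of the statement) circular. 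Second, your smoothness argument is an assertion, not a proof: you propose to compute parabolic hypercohomology groups and then claim that ``for sufficiently small weights the parabolic flag rigidifies precisely the automorphism directions\ldots forcing the obstruction map to vanish.'' Nothing in the sketch shows this vanishing, and it is not a formal consequence of smallness of weights --- at boundary points over the Kummer variety the underlying bundle has a large endomorphism algebra and the deformation theory is a priori obstructed. In Seshadri's proof, smoothness of $N(4,0)$ is obtained from the geometry of the variety of specializations of $2\times 2$ matrix algebras (the local models of $N(4,0)$ are identified with smooth spaces built from that variety), which is precisely the machinery your outline omits. Without these two ingredients the proposal reduces to a restatement of the theorem's content on the stable locus, where everything is easy, plus an unproven claim at the boundary, where the whole difficulty lives.
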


Now let ${\N}$ be the closed subscheme of $N(4,0)$ whose closed points corresponds to the subset 
\[
\left\lbrace (V, \Delta, \alpha_*) \in N(4,0)\mid \det(V)=\mathcal{{O}}_{X}\right\rbrace.
\]
One can check easily that ${\N}$ is the desingularisation of $M_{\mathcal{O}_{X}}(2,0)$ (cf. Theorem $2.1$ in \cite{BaSe1}), and we denote the desingularisation between them again by $\pi_{2}.$ 
Using results from \cite{BaSe1}, \cite{B1} and \cite{BaSe}, we have
\begin{equation}\label{pointN2.2}
N_{q}({\N}) \,=\, \u + N_{q}(Y) + 2^{2g}N_{q}(R)+ 2^{2g}N_{q}(S),
\end{equation}
where 
$R$ is a vector bundle of rank $(g-2)$ over $G(2,g),$ the Grassmanian of $2$ dimensional subspaces of $g$ dimensional vector space,
and $S$ is isomorphic to $G(3,g),$ and
$Y$ is a $\mathbb{P}^{g-2}\times \mathbb{P}^{g-2}$ bundle over $K\setminus K_{0},$ 
such that
\begin{equation}\label{pointY}
N_{q}(Y) \,=\, \#(A) N_{q}(\mathbb{P}^{g-2}\times \mathbb{P}^{g-2}) + \#(B)N_{q^{2}}(\mathbb{P}^{g-2}),
\end{equation}
(cf. Proposition 4.1 in \cite{BaSe}) where $A$ and $B$ are the set defined as in \eqref{defA} and \eqref{defB} respectively.

\section{Distribution on $M_{L}(n,d)$ } \label{distribution M(n,d)}

Let $X$ be a Galois curve of degree $N$ and genus $g \geq 2$ over a finite field $\F_{q}$ and $M_{L}(n,d)$ be the moduli space of stable vector bundles on $X$ of rank $n$ with fixed determinant $L$ of degree $d,$ such that $gcd(n,d)=1.$ 

\subsection{Proof of Theorem \ref{thm1.3}}
With the same set up as in subsection \ref{zeta function basics}, we recall that, for the smooth projective Galois curve $X,$ $R=\F_q(X)$ is a geometric Galois extension 
of $\F_q(x)$ with Galois
group $G$ of order $N$.  From equation (\ref{zeta2}), and (\ref{l-function-product}), we get
\[
\prod\limits_{i=2}\limits^{h}L(s,\chi_{i})^{T_{i}}
=\prod\limits_{l=1}\limits^{2g}(1-\sqrt{q}e(\theta_{l,X})q^{-s}).
\]
Using (\ref{local_factor}) and (\ref{Artin-factors}) on the left hand side, we get
\begin{equation}\label{local-Artin5.1}
\prod_{i=2}^h \prod\limits_{P}\prod\limits_{j=1}\limits^{T_{i}}(1-\alpha_{i,j}(P)\mid P\mid^{-s})^{-T_i}
=\prod\limits_{l=1}\limits^{2g}(1-\sqrt{q}e(\theta_{l,X})q^{-s})
\end{equation}
where the product is over all monic, irreducible polynomials $P\;\text{in}\; \F_{q}[x],$
and $P=P_\infty$, with $|P| = q^{\deg P}$ and $\deg(P_\infty) = 1,$ that is, $|P_\infty| = q.$ 
These $\alpha_{i,j}(P)$'s are either roots of unity or zero.

 Now, Taking logarithms on both sides of \eqref{local-Artin5.1} and equating the coefficient of $q^{-ms}$ 
for any positive integer $m$, we obtain 
\begin{equation}\label{theta-lambda-equality5.1}
-q^{m/2}\sum_{l=1}^{2g}e(m\theta_{l,X})=\sum\limits_{\deg\, f = m}
\Lambda(f)\sum\limits_{i=2}\limits^{h}T_{i}\sum\limits_{j=1}\limits^{T_{i}}\alpha_{i,j}(f)
\end{equation}
where the sum on the right is over all monic irreducible polynomials of degree $m$ over $\F_q$ and $\Lambda $ is the analogue of Von Mangoldt function defined as
\begin{equation*}
\Lambda(f):= \begin{cases}
\deg P &\text{ if } f=P^{k}\text{ for some monic, irreducible } P \in \F_{q}[x]\\  
0  &\text{  otherwise.}
\end{cases}
\end{equation*}
From the definition of zeta function as in \eqref{zeta2}, for any integer $k\geq2$ we can write 
\begin{eqnarray*}
	\zeta_{X}(k)&=& \frac{q^{(2k-1)}\prod\limits_{l=1}\limits^{2g}\(1-q^{-(2k-1)/2}e(\theta_{l,X})\)}{(q^{k}-1)(q^{k-1}-1)}.
\end{eqnarray*}
Taking logarithm on both sides of the above equation we get
\begin{equation}\label{zetadef5.2}
\log \zeta_{X}(k) - (2k-1)\log {q} + \log \left( (q^{k}-1)(q^{k-1}-1)\right)  \,=\, \sum\limits_{l=1}\limits^{2g}\log (1-q^{-(2k-1)/2}e(\theta_{l,X})). 
\end{equation}
For any positive integer $Z$, we define,
\begin{equation}\label{def: epsilon1}
 \epsilon_{1,Z} := -\sum\limits_{m\leq Z}q^{-(2k-1)m/2}m^{-1}\sum\limits_{l=1}\limits^{2g}e(m\theta_{l,X}),
 \end{equation}
and 
\begin{equation}\label{def: epsilon2}
\epsilon_{2,Z} := -\sum\limits_{m\geq Z+1}q^{-(2k-1)m/2}m^{-1}\sum\limits_{l=1}\limits^{2g}e(m\theta_{l,X}).
\end{equation}
Putting these in \eqref{zetadef5.2}, we can write for any integer $Z,$
\begin{equation}\label{log-zeta5.1}
\log{\zeta_{X}(k)} - (2k-1)\log{q} + \log\left( (q^k-1)(q^{k-1}-1)\right)  \,=\,\epsilon_{1,Z} + \epsilon_{2,Z}.
\end{equation}  
In the next result we estimate $\epsilon_{1,Z}$ and $\epsilon_{2,Z}$.
\begin{lem}\label{epsilon-Z5.1}
	For  $Z \geq 2$, we have
	\[| \epsilon_{1,Z}| \, \leq \,(N-1)\left( \frac{1}{q-1} + \frac{1}{q^{k}} (1.5 + \log{Z}-\log{2})\right).\] 
	\noindent
	and 
	\[| \epsilon_{2,Z}| \, \leq \, \frac{2g}{(Z + 1)} \, \frac{1}{q^{(2k-1)(Z + 1)/2}} \, \frac{1}{(1-q^{-(2k-1)/2})}.\]
	Moreover,
	\[| \epsilon_{1,1}| \, \leq \, (N-1)\left( \frac{1}{q^{k-1}} + \frac{1}{q^{k}}\right) \]
	and
	\[| \epsilon_{2,1}|\, \leq\, \frac{2g}{q^{(2k-1)}-q^{(2k-1)/2}}.\]
\end{lem}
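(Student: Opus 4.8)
The plan is to treat the two quantities by genuinely different mechanisms. Since $\epsilon_{1,Z}$ is a \emph{short} sum over $m\le Z$, it should be controlled by passing to the arithmetic of the Artin $L$-series via \eqref{theta-lambda-equality5.1}, which produces a bound depending only on $N$; whereas $\epsilon_{2,Z}$, being the \emph{tail}, should be handled by the trivial bound $\bigl|\sum_{l=1}^{2g}e(m\theta_{l,X})\bigr|\le 2g$ together with summation of a geometric series, which is what introduces the factor $g$.

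For $\epsilon_{1,Z}$ I would first substitute \eqref{theta-lambda-equality5.1} into the definition \eqref{def: epsilon1}. Writing $-q^{m/2}\sum_l e(m\theta_{l,X})=\sum_{\deg f=m}\Lambda(f)\sum_{i=2}^{h}T_i\sum_{j=1}^{T_i}\alpha_{i,j}(f)$, the two half-powers combine, since $(2k-1)/2+1/2=k$, to give
\[
\epsilon_{1,Z}=\sum_{m\le Z}\frac{1}{m}\,q^{-km}\sum_{\deg f=m}\Lambda(f)\Bigl(\sum_{i=2}^{h}T_i\sum_{j=1}^{T_i}\alpha_{i,j}(f)\Bigr).
\]
The decisive arithmetic input is that every $\alpha_{i,j}(f)$ is a root of unity or zero, so that $\bigl|\sum_{i=2}^{h}T_i\sum_j\alpha_{i,j}(f)\bigr|\le\sum_{i=2}^{h}T_i^{2}=N-1$, using $\sum_{i=1}^{h}T_i^{2}=|G|=N$ and $T_1=1$. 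Combined with the function-field prime-counting identity $\sum_{\deg f=m}\Lambda(f)=q^{m}$ (the prime $P_\infty$ of degree one contributing an extra $+1$, so that the total is $q^{m}+1$), this yields
\[
|\epsilon_{1,Z}|\le (N-1)\sum_{m\le Z}\frac{1}{m}\bigl(q^{-(k-1)m}+q^{-km}\bigr).
\]
A routine estimation of these elementary sums — isolating the $m=1$ term and bounding $\sum_{2\le m\le Z}\tfrac1m$ by $\log Z-\log 2+O(1)$ — then produces the asserted inequality. Taking $Z=1$ keeps only the $m=1$ term, where $\sum_{\deg f=1}\Lambda(f)=q+1$ gives precisely $(N-1)\bigl(q^{-(k-1)}+q^{-k}\bigr)=(N-1)\bigl(\tfrac{1}{q^{k-1}}+\tfrac{1}{q^{k}}\bigr)$.

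For $\epsilon_{2,Z}$ the argument is softer. Bounding $\bigl|\sum_{l=1}^{2g}e(m\theta_{l,X})\bigr|\le 2g$ in \eqref{def: epsilon2} and using $m^{-1}\le (Z+1)^{-1}$ throughout the tail $m\ge Z+1$ gives $|\epsilon_{2,Z}|\le \frac{2g}{Z+1}\sum_{m\ge Z+1}q^{-(2k-1)m/2}$; summing the geometric series of ratio $q^{-(2k-1)/2}$ yields $\frac{2g}{Z+1}\cdot\frac{q^{-(2k-1)(Z+1)/2}}{1-q^{-(2k-1)/2}}$, which is the stated estimate. Specializing $Z=1$ (so the tail is $m\ge 2$) and simplifying $\frac{q^{-(2k-1)}}{1-q^{-(2k-1)/2}}=\frac{1}{q^{2k-1}-q^{(2k-1)/2}}$ then gives the last inequality (after the harmless relaxation $g\le 2g$, or equivalently using $m^{-1}\le 1$).

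The main obstacle is the $\epsilon_{1,Z}$ estimate: everything hinges on passing from the spectral side (the Frobenius angles $\theta_{l,X}$) to the arithmetic side through \eqref{theta-lambda-equality5.1} and then extracting a bound that depends only on $N$ and not on $g$. This forces the \emph{pointwise} bound $|\alpha_{i,j}(f)|\le 1$ rather than any cancellation among the Artin eigenvalues, since exploiting cancellation would reintroduce the genus through the Riemann Hypothesis for the $L(s,\chi_i)$; one must also account carefully for the ramified primes and for $P_\infty$ in the count $\sum_{\deg f=m}\Lambda(f)$. By contrast, $\epsilon_{2,Z}$ is essentially immediate once the geometric tail is summed.
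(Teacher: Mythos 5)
Your proposal is correct and follows essentially the same route as the paper: substitute \eqref{theta-lambda-equality5.1} into \eqref{def: epsilon1} so that the half-powers combine into $q^{-km}$, bound the Artin eigenvalue sums pointwise by $\sum_{i=2}^{h}T_i^{2}=N-1$, use $\sum_{\deg f=m}\Lambda(f)=q^m+1$, and estimate the resulting elementary sums, while $\epsilon_{2,Z}$ is handled by the trivial bound $2g$ and a geometric series. The only difference is cosmetic: the paper bounds the harmonic-type sums slightly more explicitly (via $\sum_{m\le Z}\frac{1}{mq^{(k-1)m}}\le\frac{1}{q^{k-1}-1}\le\frac{1}{q-1}$ and $\sum_{m\le Z}\frac1m\le 1.5+\log Z-\log 2$), whereas you summarize that step as routine, and the paper dismisses the $Z=1$ case as trivial where you spell it out.
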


\begin{proof}
	
	Let $Z\,\geq\, 2$. Using (\ref{theta-lambda-equality5.1}) in the definition of $\epsilon_{1, Z}$ in \eqref{def: epsilon1}, we get
	\begin{eqnarray*}
		| \epsilon_{1, Z}|
		&\leq & \sum\limits_{m\leq Z}q^{-k m}m^{-1}\sum\limits_{deg f = m}\Lambda(f)\sum\limits_{i=2}\limits^{h}T_{i}^{2}. 
	\end{eqnarray*}
	Now, using the property that $\sum\limits_{i=2}\limits^{h}T_{i}^{2}=N-1,$ and 
	$\sum\limits_{\deg\, f = m}\Lambda(f)=q^{m}+1,$ we have
	\begin{eqnarray*}
		| \epsilon_{1,Z}| &\leq &  \sum\limits_{m\leq Z}q^{-k m}m^{-1}(q^{m}+1)(N-1) \\
		&=& (N-1)\left( \sum\limits_{m\leq Z}\dfrac{1}{mq^{(k-1)m}} + \sum\limits_{m\leq Z}\dfrac{1}{mq^{km}}\right). 
	\end{eqnarray*}
Also,
\begin{equation*}
\sum\limits_{m\leq Z}\dfrac{1}{mq^{m}} \leq -\log(1-q^{-1})\leq \frac{1}{q-1},
\end{equation*}
which gives
\begin{eqnarray*}
	|\epsilon_{1,Z}| &\leq & (N-1)\left( \frac{1}{q^{(k-1)}-1} + \frac{1}{q^{k}}\sum\limits_{m\leq Z}\dfrac{1}{m}\right)  \\
	& \leq & (N-1)\left( \frac{1}{q^{k-1}-1} + \frac{1}{q^{k}}(1.5 + \log{Z} -\log{2})\right).
\end{eqnarray*}
Similarly, from the definition of $\epsilon_{2, Z}$ in \eqref{def: epsilon2}, we have
\begin{eqnarray*}
	| \epsilon_{2,Z}| &= & 
	\bigg| 
	\sum\limits_{m\geq Z + 1}q^{-(2k-1)m/2} m^{-1}\sum\limits_{l=1}\limits^{2g}-e(m\theta_{l,X})
	\bigg| \\
	&\leq & 2g\sum\limits_{m\geq Z+1}q^{-(2k-1)m/2}m^{-1} \\
	&\leq & \frac{2g}{(Z + 1)} \, \frac{1}{q^{(2k-1)(Z + 1)/2}} \, \frac{1}{(1-q^{-(2k-1)/2})}.
\end{eqnarray*}
Also, for $Z=1,$ trivially we get the desired bound for $\epsilon_{1,1},$ and $\epsilon_{2,1},$ and that completes the proof of Lemma \eqref{epsilon-Z5.1}.
\end{proof}

\begin{prop}\label{log-zeta5.2}
	With all the notations as above, for suitable absolute constants $c_{1}>0$ and $c_{2}>0$ and assuming $\log(g)>\kappa \log(Nq)$ for a sufficiently large absolute constant $\kappa >0$ (independent of $N$), we obtain
	\begin{equation*}
	\left|  \log{\zeta_{X}(k)}\right|  \leq \frac{c_{1}N}{\sqrt{q}}+\frac{c_{2}N\log{\log{g}}}{q^{k}}
	\end{equation*}
	for any $k\geq 2.$
\end{prop}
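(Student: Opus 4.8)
The plan is to start from the identity \eqref{log-zeta5.1}, which after transferring the elementary terms to the right-hand side reads
\begin{equation*}
\log\zeta_{X}(k) = (2k-1)\log q - \log\left((q^{k}-1)(q^{k-1}-1)\right) + \epsilon_{1,Z}+\epsilon_{2,Z},
\end{equation*}
valid for every integer $Z\geq 1$. The strategy is to evaluate the two deterministic terms exactly and then balance the error terms $\epsilon_{1,Z},\epsilon_{2,Z}$ by choosing the truncation level $Z$ optimally as a function of $g$ and $q$. First I would dispose of the elementary part. Factoring out $q^{2k-1}$ gives $(q^{k}-1)(q^{k-1}-1)=q^{2k-1}(1-q^{-k})(1-q^{-(k-1)})$, whence
\begin{equation*}
(2k-1)\log q - \log\left((q^{k}-1)(q^{k-1}-1)\right) = -\log(1-q^{-k}) - \log(1-q^{-(k-1)}).
\end{equation*}
Using $-\log(1-x)\leq x/(1-x)$ for $0<x<1$ together with $k\geq 2$, the first term is $O(q^{-k})$ and the second is $O(q^{-(k-1)})=O(q^{-1})\leq O(q^{-1/2})$; these two pieces are therefore absorbed into $c_{1}N/\sqrt q$ and $c_{2}N\log\log g/q^{k}$ respectively, recalling $N\geq 1$.

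Next I turn to the error terms and invoke Lemma \ref{epsilon-Z5.1}, the crux being the choice of $Z$. I would take $Z+1=\lceil 2\log g/\log q\rceil$; any constant exceeding $2/3$ in place of $2$ suffices since $2k-1\geq 3$. The hypothesis $\log g>\kappa\log(Nq)$ with $\kappa$ large guarantees both that $Z\geq 2$ (so the first bounds of Lemma \ref{epsilon-Z5.1} apply, using $Nq\geq q$) and that $\log\log g\geq 1$. With this $Z$ one has $q^{(2k-1)(Z+1)/2}\geq g^{2k-1}\geq g^{3}$, so the bound for $\epsilon_{2,Z}$ collapses to $O(g^{-(2k-2)})=O(g^{-2})$, which under the hypothesis is vastly smaller than $N/\sqrt q$ and hence negligible. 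For $\epsilon_{1,Z}$, Lemma \ref{epsilon-Z5.1} contributes $\frac{N-1}{q^{k-1}-1}=O(N/q^{k-1})=O(N/\sqrt q)$ together with $\frac{N-1}{q^{k}}(1.5+\log Z-\log 2)$; since $\log Z=\log\log g-\log\log q+O(1)\leq\log\log g+O(1)$ (the quantity $\log\log q$ being bounded below by an absolute constant for $q\geq 2$), this last piece is $O(N\log\log g/q^{k})$, the additive constant being swallowed by $\log\log g\geq 1$.

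The main obstacle, and the only genuinely delicate point, is exactly this balancing in the choice of $Z$: it must be large enough that $\epsilon_{2,Z}$ decays faster than any term we are allowed on the right, yet small enough that $\log Z$ does not exceed $\log\log g$ by more than a constant. The value $Z\sim\log g/\log q$ sits precisely at the crossover, and the assumption $\log g>\kappa\log(Nq)$ is what makes this value admissible in Lemma \ref{epsilon-Z5.1} (forcing $Z\geq 2$) while keeping $\log\log q$ and the stray constants dominated. Collecting the four contributions and enlarging $c_{1},c_{2}$ to absorb the absolute constants then yields $\left|\log\zeta_{X}(k)\right|\leq c_{1}N/\sqrt q+c_{2}N\log\log g/q^{k}$ for all $k\geq 2$, as claimed.
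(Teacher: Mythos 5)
Your proof is correct, and it rests on the same two pillars as the paper's own argument: the truncated identity \eqref{log-zeta5.1} and the estimates of Lemma \ref{epsilon-Z5.1}. The genuine difference lies in how the truncation level is chosen and, as a consequence, in the architecture of the proof. The paper takes $Z=\frac{2}{3}\log\bigl(2g\sqrt q/((1-q^{-(2k-1)/2})(N-1))\bigr)/\log q$, a choice depending on both $N$ and $k$, tuned so that $|\epsilon_{2,Z}|$ is comparable to the $(N-1)q^{-k}\log Z$ contribution inside $|\epsilon_{1,Z}|$; since this quantity need not be $\geq 2$ (e.g.\ when $q^{2k-1}$ is large compared with $g/(N-1)$), the paper must split into two cases, falling back on $Z=1$ and the bounds for $|\epsilon_{1,1}|,|\epsilon_{2,1}|$ in the complementary range, and only then combines \eqref{Z2} and \eqref{Z1} into \eqref{log-zeta2} before invoking the hypothesis $\log g>\kappa\log(Nq)$ to simplify. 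You instead take the single choice $Z+1=\lceil 2\log g/\log q\rceil$, independent of $N$ and $k$, and use the hypothesis from the outset: it forces $Z\geq 2$, it makes $q^{(2k-1)(Z+1)/2}\geq g^{2k-1}$ so that $|\epsilon_{2,Z}|=O(g^{-(2k-2)})=O(g^{-2})$ uniformly in $k\geq 2$, and $g^{-2}$ is indeed dominated by $N/\sqrt q$ because $q<g^{1/\kappa}$. This buys a one-case proof with a cleaner error analysis; what it gives up is the paper's intermediate inequality \eqref{log-zeta2}, which is valid without any largeness hypothesis on $g$ (the hypothesis entering only at the final simplification). One wording slip worth fixing: when absorbing the elementary terms $-\log(1-q^{-k})-\log(1-q^{-(k-1)})$, your ``respectively'' attaches $O(q^{-k})$ to $c_1N/\sqrt q$ and $O(q^{-1/2})$ to $c_2N\log\log g/q^{k}$; the latter pairing fails for large $k$ (then $q^{-1/2}$ can far exceed $\log\log g/q^{k}$), but the opposite pairing you clearly intended --- $O(q^{-k})$ into the $q^{-k}$ term and $O(q^{-(k-1)})=O(q^{-1/2})$ into the $q^{-1/2}$ term --- is valid, so the argument stands.
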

\begin{proof}
	If  $ \frac{2g}{(q^{2k-1}-q^{(2k-1)/2})}\, \geq \, q^{-1/2}(N-1)$, 
	using Lemma \ref{epsilon-Z5.1} with  
	\[Z =\frac{2}{3}\,\frac{\log{\left( \frac{2g\sqrt{q}}{(1-q^{-(2k-1)/2})(N-1)} \right) }}
	{\log{q}}\, \geq 2
	\]
	we get
	\begin{align*}
	&| \epsilon_{1,Z}| \leq (N-1)\left( \frac{1}{q-1} + \frac{1}{q^{k}}(1.5 -\log{2})\right)\\ 
	&+ \frac{(N-1)}{q^{k}}\log{\left(\frac{2}{3}\,\frac{\log{\left( \frac{2g\sqrt{q}}{(1-q^{-(2k-1)/2})(N-1)} \right) }}
		{\log{q}}\right)}.
	\end{align*}
	and
	\[
	| \epsilon_{2,Z}|  \leq \frac{(N-1)}{q^{k}}\log\left(\frac{2}{3}\frac{\log{\left( \frac{2g\sqrt{q}}{(1-q^{-(2k-1)/2})(N-1)} \right) }}
	{\log{q}} \right) 
	\]
Note that $\frac{1}{q-1}\leq \frac{1}{q}+\frac{2}{q^2}.$
Hence in this case
\begin{equation}\label{Z2}
| \epsilon_{1,Z}| + | \epsilon_{2,Z}|
\leq \,(N-1)
\Bigg(
\frac{1}{q} +\frac{2}{q^2}+ 
\frac{2}{q^{k}}
\log{
	\Bigg(
	\frac{2\log{
			\left( \frac{2g\sqrt{q}}{(N-1)(1-q^{-(2k-1)/2})}
			 \right) }}       {3\log{q}}
	\Bigg)
} 
+ \frac{1}{q^{k}}
\Bigg). 
\end{equation}
Now, suppose $ \frac{2g}{(q^{2k-1}-q^{(2k-1)/2})}\, < \, q^{-1/2}(N-1)$. 
Again using Lemma \ref{epsilon-Z5.1} with $Z=1,$ we get
\begin{equation}\label{Z1}
| \epsilon_{1,Z}|+ | \epsilon_{2,Z}| \leq 
(N-1)\left\lbrace \frac{1}{\sqrt{q}}+\frac{1}{q^k}+\frac{1}{q^{k-1}}\right\rbrace .
\end{equation}
Using \eqref{Z2}, and \eqref{Z1} in \eqref{log-zeta5.1}, we get 
\begin{align}\label{log-zeta2}
&\mid \log{\zeta_{X}(k)} -(2k-1)\log{q} + \log\{(q^k-1)(q^{k-1}-1)\}\mid \nonumber \\
&\le(N-1)\left( \frac{c}{\sqrt{q}} + 
\frac{2}{q^{k}}\left\lbrace 
\log{\left(\frac{2\log{\left( \frac{2g\sqrt{q}}{(N-1)(1-q^{-(2k-1)/2})} \right) }}{3\log{q}}\right)} \right\rbrace\right)
\end{align}
for some absolute constant $c> 0.$
After simplifying we can write
\begin{eqnarray*}
	\frac{2\log{\left( \frac{2g\sqrt{q}}{(N-1)(1-q^{-(2k-1)/2})} \right) }}{3\log{q}}
	&=& \frac{2\log{g}}{3\log{q}} \left(1 + O\left(\frac{\log{(Nq)}}{\log{g}} \right)  \right) .
\end{eqnarray*}

Hence,
\begin{align*}
\frac{(N-1)}{q^{k}}\log \left( \frac{2\log{g}}{3\log{q}} \left(1 + O\left(\frac{\log{(Nq)}}{\log{g}} \right)  \right) \right)
&= \frac{(N-1)\log{\log{g}}}{q^{k}} + O\left( \frac{N}{q^{k-1}}\right)  + O\left( \frac{N}{q^{k}}\frac{\log{(Nq)}}{\log{g}}\right) 
\end{align*}

under the assumption $ \log{g}> \kappa \log{(Nq)}$ for a sufficiently large constant $\kappa >0.$\\

Also, we see that, 
\begin{equation*}
\log(q^{k}-1)(q^{k-1}-1)
= (2k-1)\log{q}\,+\,O\left( \frac{1}{q^{k-1}}\right) .
\end{equation*}
Using the above two results in \eqref{log-zeta2}, we get,
\begin{equation*}
\left| \log\zeta_{X}(k)+ O\left(\frac{1}{q^{k-1}} \right) \right|\leq \frac{c(N-1)}{\sqrt{q}}+\frac{2(N-1)\log{\log{g}}}{q^{k}} + O\left( \frac{N}{q^{k-1}}\right)  + O\left( \frac{N}{q^{k}}\frac{\log{(Nq)}}{\log{g}}\right).
\end{equation*}

After simplification we can rewrite this as,
\[ \left|  \log{\zeta_{X}(k)}\right|  \leq \frac{c_{1}N}{\sqrt{q}}+\frac{c_{2}N\log{\log{g}}}{q^{k}}\]

with suitable choice of two constants $c_{1}>0$ and $c_{2}>0$ and hence the Proposition.

\end{proof}

From Proposition \ref{log-zeta5.2}, we obtain the following result. 
\begin{prop}\label{log-zeta5.3}
	There exist an absolute constant $c'>0$ such that
	\[
	(\log\,g)^{\frac{-c'N}{q^k}}\exp\left(\frac{-c'N}{\sqrt{q}} \right) \leq\zeta_{X}(k)\leq (\log\,g)^{\frac{c'N}{q^k}}\exp\left( \frac{c'N}{\sqrt{q}}\right), 
	\] 
	for any $k\geq 2,$ whenever $\log{g}>\kappa \log{(Nq)}$ for a sufficiently large constant $\kappa>0.$ 
\end{prop}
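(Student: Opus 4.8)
The plan is to obtain the claimed two-sided multiplicative bound simply by exponentiating the additive estimate furnished by Proposition \ref{log-zeta5.2}; this is the only substantive step. Unwrapping the absolute value there, for every $k\geq 2$ one has
\[
-\frac{c_{1}N}{\sqrt q}-\frac{c_{2}N\log\log g}{q^{k}}\;\leq\;\log\zeta_{X}(k)\;\leq\;\frac{c_{1}N}{\sqrt q}+\frac{c_{2}N\log\log g}{q^{k}},
\]
valid under the standing hypothesis $\log g>\kappa\log(Nq)$. Applying the monotone exponential function to this chain immediately converts the additive bound into a multiplicative one.

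The key manipulation is the identity $\exp\bigl(c_{2}Nq^{-k}\log\log g\bigr)=(\log g)^{c_{2}N/q^{k}}$, which rewrites the $\log\log g$ contribution as a power of $\log g$, while the factor $\exp(\pm c_{1}N/\sqrt q)$ is already in the desired shape. Exponentiating the displayed inequalities therefore yields
\[
(\log g)^{-c_{2}N/q^{k}}\exp\!\left(-\frac{c_{1}N}{\sqrt q}\right)\;\leq\;\zeta_{X}(k)\;\leq\;(\log g)^{c_{2}N/q^{k}}\exp\!\left(\frac{c_{1}N}{\sqrt q}\right).
\]

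It then remains only to replace the two constants $c_{1},c_{2}$ by a single $c':=\max\{c_{1},c_{2}\}$. Here I would use that the hypothesis $\log g>\kappa\log(Nq)$, with $\kappa$ a large absolute constant and $Nq\geq 2$, forces $\log g\geq 1$; since the base $\log g$ is then at least one, enlarging the exponent from $c_{2}$ to $c'$ only increases $(\log g)^{c_{2}N/q^{k}}$ and only decreases $(\log g)^{-c_{2}N/q^{k}}$, and likewise $\exp(\pm c_{1}N/\sqrt q)$ is dominated by $\exp(\pm c'N/\sqrt q)$. This produces exactly the asserted inequalities.

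I do not anticipate a genuine obstacle in this last step: all of the analytic difficulty — the decomposition into $\epsilon_{1,Z}$ and $\epsilon_{2,Z}$, the choice of truncation level $Z$, and the accompanying case analysis — has already been discharged in Lemma \ref{epsilon-Z5.1} and Proposition \ref{log-zeta5.2}. The only point that genuinely deserves care is the verification that $\log g\geq 1$, so that collapsing to a single constant preserves \emph{both} inequality directions simultaneously; this, however, is immediate from the hypothesis and requires no further estimation.
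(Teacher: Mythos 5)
Your proposal is correct and matches the paper's own (implicit) argument: the paper derives Proposition \ref{log-zeta5.3} directly from Proposition \ref{log-zeta5.2} by exactly this exponentiation, using $\exp\left(c_{2}Nq^{-k}\log\log g\right)=(\log g)^{c_{2}N/q^{k}}$. Your added care about collapsing $c_{1},c_{2}$ into a single constant $c'=\max\{c_{1},c_{2}\}$ via $\log g\geq 1$ is a detail the paper leaves unstated, but it is the right justification and introduces nothing new.
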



Now we recall the following result of Xiong and Zaharescu (Theorem 1 of \cite{XiZa}).
\begin{lem}\label{Xiong-Zaha}
	Let $X$ be a Galois curve of degree $N$ of genus $g \geq 1$ over $\F_{q}.$
	Then
	\begin{equation}\label{log-Jacobian}
	\left. \mid\log{\left( N_{q}(J_{X})\right)}-g\log{q}\right. \mid\, 
	\leq\, (N-1)\left(\log{\max\left\lbrace 1,\dfrac{\log{\left(\frac{7g}{(N-1)}\right)}}{\log{q}}\right\rbrace }
	+3\right).
	\end{equation}
\end{lem}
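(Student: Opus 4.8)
The plan is to mirror the argument used above for $\log\zeta_X(k)$ (Lemma~\ref{epsilon-Z5.1} and Proposition~\ref{log-zeta5.2}), applying it instead to the value of the numerator of the zeta function at $t=1$. From the Weil form \eqref{zeta} of $Z_X$, the number of rational points of the Jacobian is the value at $t=1$ of that numerator, so
\[
N_q(J_X)=\prod_{l=1}^{2g}\left(1-\sqrt{q}\,e(\theta_{l,X})\right).
\]
Since the Frobenius angles come in conjugate pairs, grouping $\theta$ with $-\theta$ gives $(1-\sqrt q\,e(\theta))(1-\sqrt q\,e(-\theta))=q\,|1-q^{-1/2}e(\theta)|^{2}$, so that the $g$ factors of $q$ produce the main term $g\log q$ and
\[
\log N_q(J_X)-g\log q=-\sum_{m\ge 1}\frac{q^{-m/2}}{m}\sum_{l=1}^{2g}e(m\theta_{l,X}).
\]
This reduces the problem to estimating the power sums $S_m:=\sum_{l=1}^{2g}e(m\theta_{l,X})$.

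The key is to bound $S_m$ in two complementary ways. The arithmetic bound comes from \eqref{theta-lambda-equality5.1}: because each $\alpha_{i,j}(f)$ is a root of unity or zero, $\sum_{i=2}^{h}T_i^{2}=N-1$, and $\sum_{\deg f=m}\Lambda(f)=q^{m}+1$, one obtains $|S_m|\le (N-1)(q^{m/2}+q^{-m/2})$. The trivial bound is $|S_m|\le 2g$. I would then split the series at a cutoff $Z$, writing $\log N_q(J_X)-g\log q=\varepsilon_1+\varepsilon_2$ with $\varepsilon_1$ the head $m\le Z$ and $\varepsilon_2$ the tail $m>Z$. The arithmetic bound applied to $\varepsilon_1$ gives $|\varepsilon_1|\le (N-1)\sum_{m\le Z}\tfrac1m(1+q^{-m})\le (N-1)\left(1+\log Z+\tfrac{1}{q-1}\right)$, while the trivial bound applied to $\varepsilon_2$ gives the geometrically decaying tail $|\varepsilon_2|\le \tfrac{2g}{Z+1}\cdot\tfrac{q^{-(Z+1)/2}}{1-q^{-1/2}}$.

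Finally I would calibrate the cutoff, taking $Z$ of the order of $\tfrac{2\log(7g/(N-1))}{\log q}$ so that the tail $\varepsilon_2$ drops to $O(N-1)$ and is absorbed into the additive constant, while reverting to $Z=1$ whenever this quantity falls below $1$ --- this being exactly the role of the $\max\{1,\cdot\}$ in the statement. Substituting this $Z$ into the estimate for $\varepsilon_1$ and collecting the contributions then yields $(N-1)\left(\log\max\{1,\log(7g/(N-1))/\log q\}+3\right)$. The conceptual ingredient, namely the dual estimate on $S_m$, is immediate from the Galois/Artin setup already in place; the main obstacle is the careful calibration of $Z$ together with the bookkeeping of absolute constants so as to land precisely on the factor $7$ and the additive $3$, and to check that the large-$q$ and small-$q$ regimes join seamlessly through the $\max$.
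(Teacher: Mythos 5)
The paper does not actually prove this lemma --- it is quoted as Theorem 1 of \cite{XiZa} --- and your proposal is in substance exactly the proof from that source, run with the same machinery the paper deploys for $\log\zeta_X(k)$ in Lemma \ref{epsilon-Z5.1} and Proposition \ref{log-zeta5.2}: the class-number identity $N_q(J_X)=\prod_{l=1}^{2g}\bigl(1-\sqrt{q}\,e(\theta_{l,X})\bigr)$, the two complementary bounds $|S_m|\le (N-1)(q^{m/2}+q^{-m/2})$ (from \eqref{theta-lambda-equality5.1}, using $\sum_{i\ge2}T_i^2=N-1$ and $\sum_{\deg f=m}\Lambda(f)=q^m+1$) and $|S_m|\le 2g$, and a head/tail split at a calibrated cutoff $Z$. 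Your plan is correct; the only deferred item, landing exactly on the constants $7$ and $3$, is routine bookkeeping once you replace your slightly lossy intermediate estimates by the sharper ones the paper itself uses in Lemma \ref{epsilon-Z5.1} (e.g.\ $\sum_{m\le Z}m^{-1}\le 1.5+\log Z-\log 2$ for $Z\ge 2$, and $\sum_{m\ge1}q^{-m}m^{-1}\le\log\tfrac{q}{q-1}$ in place of $\tfrac{1}{q-1}$), after which the head, the $\log 2$ from $Z\approx\tfrac{2\log(7g/(N-1))}{\log q}$, and the tail together fit under the additive $3$, with the $Z=1$ case covering the regime where the $\max$ equals $1$.
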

A direct consequence of Lemma\ref{Xiong-Zaha} is as follows.
\begin{prop}\label{log_Jac5.1}
	For $\log g>\kappa_{1} \log q,$ where $\kappa_{1}>0$ is a large constant, we have
	\[
	 q^{g}(\log\frac{7g}{N-1})^{-3(N-1)}\leq N_{q}(J_{X})\leq q^{g}(\log\frac{7g}{N-1})^{3(N-1)}.
	 \] 
\end{prop}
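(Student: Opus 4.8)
The plan is to exponentiate the additive estimate of Lemma \ref{Xiong-Zaha}, after showing that the hypothesis $\log g>\kappa_{1}\log q$ permits replacing its right-hand side by the cleaner quantity $3(N-1)\log\log\frac{7g}{N-1}$. Write $L:=\log\frac{7g}{N-1}$, so that the asserted two-sided bound is equivalent to the single estimate
\[
\left|\log\left(N_{q}(J_{X})\right)-g\log q\right|\le 3(N-1)\log L .
\]
Indeed, once this is established, exponentiating turns $g\log q\pm 3(N-1)\log L$ into the factor $q^{g}\left(\log\frac{7g}{N-1}\right)^{\pm 3(N-1)}$, which is exactly the claim.

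The first thing I would check is that the hypothesis forces the maximum in Lemma \ref{Xiong-Zaha} to be attained by its second argument. Treating $N$ as a fixed parameter, we have $L=\log 7+\log g-\log(N-1)\ge \tfrac12\log g$ for $g$ large, so the assumption $\log g>\kappa_{1}\log q$ gives $L/\log q\ge \kappa_{1}/2>1$ as soon as $\kappa_{1}>2$; hence $\max\{1,L/\log q\}=L/\log q$ and Lemma \ref{Xiong-Zaha} reads
\[
\left|\log\left(N_{q}(J_{X})\right)-g\log q\right|\le (N-1)\left(\log L-\log\log q+3\right).
\]
The remaining step is the elementary inequality $\log L-\log\log q+3\le 3\log L$, i.e. $3-\log\log q\le 2\log L$. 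Since $-\log\log q$ is bounded above uniformly for $q\ge 2$, the left-hand side is at most an absolute constant, whereas $\log L\to\infty$ as $g\to\infty$; choosing $\kappa_{1}$ large enough that $L$ (and hence $\log L$) is large makes the inequality hold. Combining the last two displays yields the bound $3(N-1)\log L$, and exponentiating gives the Proposition.

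There is no genuine obstacle here, since the statement is really a direct corollary of Lemma \ref{Xiong-Zaha}; the only care required is bookkeeping with the threshold $\kappa_{1}$. I would verify explicitly that $\kappa_{1}$ can be taken large enough (possibly depending on $N$) to guarantee simultaneously that $\frac{7g}{N-1}>1$ so that $L>0$, that $L/\log q>1$ so the maximum simplifies, and that $\log L$ dominates the additive constant $3-\log\log q$. Recording these threshold conditions openly, rather than silently absorbing them into the constant, is the one point where the argument must be handled with a little attention.
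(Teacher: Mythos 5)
Your proof is correct and takes essentially the same route as the paper: the paper offers no separate argument, presenting the proposition simply as a direct consequence of Lemma \ref{Xiong-Zaha}, and your write-up (using $\log g>\kappa_{1}\log q$ to force the maximum to be $L/\log q$, then absorbing $3-\log\log q$ into $2\log L$) is exactly the bookkeeping that justification requires. Your explicit note that the threshold $\kappa_{1}$ may depend on $N$ is a reasonable reading of the statement, consistent with the paper's later use of the hypothesis $\log g>\kappa\log(Nq)$.
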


Next we recall the definition of $C_{L}(n_{1},n_{2},...,n_{k})$ in equation \eqref{unschain}. 
Based on the above mentioned two results on the bounds of the quantity $N_{q}(J_{X})$ and $\zeta_{X}(k)$,  we get the following result.
\begin{prop}\label{Pdom5.1}
	Let $E$ be an unstable vector bundle in $\Mus.$ Then for any partition $(n_{1},n_{2},...,n_{k})$ of the rank $n,$ we have
	$C_{L}(n_{1},n_{2},...,n_{k})=o\left(q^{(n^2-\frac{n}{c})(g-1)}\zeta_{X}(2)\zeta_{X}(3)...\zeta_{X}(n-1) \right)$ for all $k\geq 2,$ and $c>1.$
\end{prop}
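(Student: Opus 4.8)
The plan is to start from the closed formula of Proposition~\ref{Pbeta}(ii) and to bound each of its factors uniformly in the degree data, so that the whole $g$-dependence collapses into a single explicit power of $q^{g-1}$ times subpolynomial corrections. Write $m=k\ge 2$ for the number of blocks of the partition. Then
\begin{equation*}
C_{L}(n_{1},\dots,n_{m})=\sum\frac{(N_{q}(J_{X}))^{m-1}}{q^{\chi}}\prod_{i=1}^{m}\beta(n_{i},d_{i}),
\end{equation*}
with $\chi=\sum_{i<j}(d_{i}n_{j}-d_{j}n_{i})+(1-g)\sum_{i<j}n_{i}n_{j}$, the sum ranging over $(d_{1},\dots,d_{m})\in\Z^{m}$ with $\sum_{i}d_{i}=d$ and $d_{1}/n_{1}>\cdots>d_{m}/n_{m}$. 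Since $\sum_{i<j}(d_in_j-d_jn_i)=\sum_{i<j}n_in_j(\mu_i-\mu_j)$ with $\mu_i=d_i/n_i$ strictly decreasing, I first record $q^{-\chi}=q^{-\sum_{i<j}(d_in_j-d_jn_i)}\,(q^{g-1})^{\sum_{i<j}n_in_j}$.

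The key input is the Siegel upper bound on $\beta$. Because $\beta(n_i,d_i)$ is the subsum of the rank-$n_i$ Siegel identity \eqref{Si2.1} taken only over semistable classes, and all summands are nonnegative, I obtain the bound, uniform in $d_i$,
\begin{equation*}
\beta(n_{i},d_{i})\le\frac{1}{q-1}\,(q^{g-1})^{n_i^2-1}\,\zeta_{X}(2)\cdots\zeta_{X}(n_i).
\end{equation*}
Together with $N_{q}(J_{X})\le q^{g}\big(\log\tfrac{7g}{N-1}\big)^{3(N-1)}$ from Proposition~\ref{log_Jac5.1}, this lets me factor out the genus-independent degree sum
\begin{equation*}
\Sigma:=\sum_{\substack{\sum_i d_i=d\\ d_1/n_1>\cdots>d_m/n_m}}q^{-\sum_{i<j}(d_in_j-d_jn_i)},
\end{equation*}
which I check is finite and depends only on $q$: the exponent grows linearly as the degree vector leaves the chamber boundary, giving geometric decay exactly as in the rank-three evaluation \eqref{uns3}. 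All remaining factors are powers of $q^{g-1}$ or polylogarithmic and zeta corrections, and by Proposition~\ref{log-zeta5.3} a product of boundedly many factors $\zeta_{X}(j)$ with $j\le n-1$, multiplied by the polylog from $N_q(J_X)$, is $q^{o(g)}$.

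Collecting the power of $q^{g-1}$ from $(N_q(J_X))^{m-1}$, from $q^{-\chi}$ and from $\prod_i\beta(n_i,d_i)$ yields the exponent
\begin{equation*}
(m-1)+\sum_{i<j}n_in_j+\sum_i(n_i^2-1)=\frac{n^2+\sum_i n_i^2}{2}-1,
\end{equation*}
where I used $\sum_{i<j}n_in_j=(n^2-\sum_i n_i^2)/2$. For a nontrivial partition the quantity $\sum_i n_i^2$ is maximized by $(n-1,1)$, so $\sum_i n_i^2\le (n-1)^2+1$ and the exponent is at most $n^2-n$ (matching the $HN(n-1,1)$ model, cf.\ \eqref{uns4.1}). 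Consequently $C_{L}(n_1,\dots,n_m)\ll (q^{g-1})^{n^2-n}q^{o(g)}$, while the comparison quantity is $(q^{g-1})^{n^2-n/c}\zeta_{X}(2)\cdots\zeta_{X}(n-1)$, whose zeta factors are bounded below by $q^{-o(g)}$. Since $n^2-n<n^2-n/c$ exactly when $c>1$, the ratio is $\ll (q^{g-1})^{-n(1-1/c)}q^{o(g)}\to 0$ as $g\to\infty$ (under the standing hypothesis $\log g>\kappa\log(Nq)$), which is the asserted $o(\cdot)$ bound.

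The main obstacle is the bookkeeping that turns the crude uniform estimates into this single clean power of $q^{g-1}$: I must verify that $\Sigma$ converges uniformly, which forces a quantitative use of the strict slope inequalities $d_1/n_1>\cdots>d_m/n_m$ rather than their mere validity, and I must ensure the finitely many (but $n$-dependent) zeta and polylogarithmic factors are genuinely negligible against the positive gap $n^2-n/c-(n^2-n)=n(1-1/c)$ in the $q^{g-1}$-exponent. Everything else is a direct substitution, and no induction on the rank is actually needed for the $o(\cdot)$ statement, the uniform Siegel bound on $\beta$ already superseding it.
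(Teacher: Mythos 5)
Your proposal is correct, but it takes a genuinely different route from the paper's. The paper proceeds by induction on the rank: the base case is the explicit rank-three evaluation of Proposition \ref{unstable rank 3}, and the inductive step re-derives a recursion for $C_{L}(n_{1},\dots,n_{m})$ by analyzing $\Aut E$ through the sequence $0\to E_{1}\to E\to M\to 0$ (identifying $\ker\Phi\cong H^{0}(\Hom(M,E_{1}))$ and isomorphism classes of extensions with orbits of $\Aut E_{1}\times\Aut M$ on $H^{1}(\Hom(M,E_{1}))$), then splits $\beta(n_{1},d_{1})$ into its Siegel main term plus an unstable remainder controlled by the induction hypothesis. You bypass all of this by taking the closed Desale--Ramanan formula of Proposition \ref{Pbeta}(ii) as the starting point and replacing every $\beta(n_{i},d_{i})$ by the monotonicity bound $\beta(n_{i},d_{i})\le\frac{1}{q-1}q^{(n_{i}^{2}-1)(g-1)}\zeta_{X}(2)\cdots\zeta_{X}(n_{i})$, which is legitimate and uniform in $d_{i}$ because all summands in Siegel's formula \eqref{Si2.1} are positive and its right-hand side is degree-independent; no induction is then needed. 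Your two nontrivial verifications both hold: the degree sum $\Sigma$ converges geometrically and uniformly in $g$, as one sees by passing to the integer variables $t_{k}=d_{k}n_{k+1}-d_{k+1}n_{k}\ge 1$, in terms of which $\sum_{i<j}(d_{i}n_{j}-d_{j}n_{i})=\sum_{k}\frac{N_{k}(n-N_{k})}{n_{k}n_{k+1}}t_{k}$ with $N_{k}=n_{1}+\cdots+n_{k}$ a positive linear combination, the map $(d_{i})\mapsto(t_{k})$ being injective on the affine lattice $\sum_{i}d_{i}=d$; and the exponent bookkeeping $(m-1)+\sum_{i<j}n_{i}n_{j}+\sum_{i}(n_{i}^{2}-1)=\frac{n^{2}+\sum_{i}n_{i}^{2}}{2}-1\le n^{2}-n$ is right, since $\sum_{i}n_{i}^{2}\le(n-1)^{2}+1$ for any partition into $m\ge 2$ parts. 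The resulting exponent gap $n(1-1/c)>0$ then dominates the polylogarithmic and $\exp(O(N/\sqrt{q}))$ corrections from Propositions \ref{log-zeta5.3} and \ref{log_Jac5.1} under the standing hypothesis $\log g>\kappa\log(Nq)$, exactly as in the paper's conclusion. In terms of trade-offs: your argument is shorter and isolates the fact that only an upper bound on $\beta$ is ever needed for an $o(\cdot)$ statement, whereas the paper's induction, at the cost of the base case and the automorphism--extension analysis (which in substance re-proves the recursion underlying Proposition \ref{Pbeta}(ii)), tracks the main term of each $\beta(n_{1},d_{1})$ along the way --- more information than this proposition requires.
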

\begin{proof}
	We use induction on the rank $n$ of the vector bundle.
For $n=3,$
	using Proposition\ref{log-zeta5.3} and Proposition \ref{log_Jac5.1} in \eqref{uns4.1}, we have
	\[
	\lim_ {g \rightarrow \infty}\dfrac{C_{L}(2,1)}{q^{(3^2-\frac{3}{c})(g-1)}\zeta_{X}(2)}=0.
	\]
	Similarly one can show that $C_{L}(1,2)$ and $C_{L}(1,1,1)$ are of the size $o(q^{(3^2-\frac{3}{c})(g-1)}\zeta_{X}(2)).$ Hence the induction hypothesis holds for the initial case.

	Now, suppose the statement is true for all partitions of $m,$ where $m$ is the rank of any vector bundle $E$ in $\mathcal{M}_{L}^{\text{us}}(m,d),$ and $m < n.$ That is for any $m<n,$ 
	\[C_{L}(m_{1},m_{2},...,m_{k})=o\left(q^{(m^2-\frac{m}{c})(g-1)}\zeta_{X}(2)\zeta_{X}(3)...\zeta_{X}(m-1) \right)\]
	\noindent
	for all $k\geq 2$ such that $\sum\limits_{i=1}^{k}m_{i}=m$. We want to show that the statement is true for rank $n.$
Suppose $E$ is in $HN(n_{1},n_{2},...,n_{m}),$ such that $E$ has the H-N filtration 
\[
 0=E_{0} \subsetneqq E_{1}...\subsetneqq E_{m}=E.
 \]
We write $M=E/E_{1}.$ Using Proposition $4.4$ in \cite{NaSe} it can be shown that, there is no nonzero homomorphism from $E_{1}$ to $M.$ Hence every automorphism of $E$ keeps $E_{1}$ invariant and it goes down to an automorphism of the quotient $M.$ Thus we get a well defined map :
\[
\Phi: \Aut E \rightarrow \Aut E_{1} \times \Aut M,
\]
such that, for any $f$ in $\Aut E,$ $\Phi(f)=(f\vert _{E_{1}}, p\circ f)$ where $\xymatrix{E\ar[r]^{f} & E\ar[r]^{p} & M}.$ Note that for any $g \in \Hom(M,E_{1}),$ we have the image $\Phi(Id+g)=(Id\vert_{E_{1}}, Id\vert_{M})$ for $\left( Id + g\right) $ in $\Aut E.$ Therefore, $ Id + H^{0}(\bar{X}, \Hom(M, E_{1}))$ is contained in $\ker(\Phi).$ Conversely let any $f \neq Id,$ is in $\ker(\Phi),$ that is\\ 
\noindent
$\Phi(f)=(Id\vert_{E_{1}}, Id\vert_{M}).$ We see that $(f-Id)\vert _{E_{1}}=0$ and $p\circ (f-Id)=0,$ and since $\Hom(E_{1}, M)=0,$ therefore, $(f-Id)$ is in $\Hom(M, E_{1}).$ So any $f$ in $\ker(\Phi),$ is in $Id +H^{0}(\bar{X}, \Hom(M, E_{1})), $ and hence $\ker(\Phi) \cong H^{0}(\bar{X}, \Hom(M, E_{1}))=T$(say).

Next we consider the action of the group $\Aut E_{1} \times \Aut M$ say $G$ on $ H^{1}(\bar{X}, \Hom(M, E_{1})),$ that is, on equivalence classes of extensions of $M$ by $E_{1}.$ For simplicity we denote $H^{1}(\bar{X}, \Hom(M, E_{1}))$ by $S.$ We denote any equivalene class of short exact extensions 
\[
\xymatrix{0\ar[r] & E_{1}\ar[r]^{\alpha} & E\ar[r]^{\beta} & M\ar[r] & 0}
\]
 in $S$ by $[E; (\alpha, \beta)].$ For any $(\phi_{1}, \phi_{2})$ in $G,$ define the action by 
\[
(\phi_{1}, \phi_{2})\cdot [E; (\alpha, \beta)]= [E; (\alpha\circ \phi_{1}, \phi_{2}\circ\beta)].
\]
We know that, any two extensions $(E; (\alpha_{1}, \beta_{1}))$ and $(E; (\alpha_{2}, \beta_{2}))$ are isomorphic if the following diagram commute:
\begin{center}

	\begin{tikzpicture}[descr/.style={fill=white,inner sep=2.5pt}]
\matrix (m) [matrix of math nodes, row sep=3em,
column sep=3em]
{ 0 & E_{1} & E & M & 0\\
	0& E_{1} & E & M & 0\\ };
\path[->,font=\scriptsize]
(m-1-1) edge node[auto] {} (m-1-2)
(m-1-2) edge node[auto] {$ \alpha_{1} $} (m-1-3)
(m-1-3) edge node[auto] {$ \beta_{1} $} (m-1-4)
(m-1-4) edge node[auto] {} (m-1-5)
(m-1-2)edge node[left]  {$ \phi_{1} $} (m-2-2)
(m-1-3)edge node[left] {$\phi $} (m-2-3)
(m-1-4)edge node[left] {$ \phi_{2} $} (m-2-4)
(m-2-1) edge node[auto] {} (m-2-2)
(m-2-2) edge node[auto] {$ \alpha_{2} $} (m-2-3)
(m-2-3) edge node[auto] {$ \beta_{2} $} (m-2-4)
(m-2-4) edge node[auto] {} (m-2-5);
\end{tikzpicture}
\end{center}
where $\phi_{1}, \phi$ and $\phi_{2}$ are isomorphisms. When $\phi_{1}=Id$ and $\phi_{2}=Id$, we get the equivalence class. From the definition, it is easy to see that, two such extensions in $S$ are isomorphic if and only if they are in the same orbit under this action and the isotropy subgroup of $[E; (\alpha, \beta)]$
denoted by $G_{[E; (\alpha, \beta)]}$is same as the image of $\Aut E$ under the map $\Phi.$ We denote the orbit of $[E; (\alpha, \beta)]$ in $S$ by $G\cdot [E; (\alpha, \beta)].$ Therefore, we have
\begin{equation*}
C_{L}(n_{1},n_{2},...,n_{m})= \sum\limits_{E_{1},M}\,\sum\limits_{[E; (\alpha, \beta)] \in S}\frac{1}{\vert \Aut E\vert \, \vert G \cdot [E; (\alpha, \beta)]\vert}.
\end{equation*}
Also, $\vert G \cdot [E; (\alpha, \beta)]\vert = \left[ G:G_{[E; (\alpha, \beta)]}\right] = \frac{\vert \Aut E_{1} \times \Aut M \vert }{\vert Img(\Phi)\vert}= \frac{\vert \Aut E_{1} \times \Aut M \vert \, \vert \ker(\Phi)\vert}{\vert \Aut E\vert}.$ 
So, 
\begin{equation*}
C_{L}(n_{1},n_{2},...,n_{m})= \sum\limits_{E_{1},M}\frac{\vert S \vert }{\vert \Aut E_{1}\vert \, \vert \Aut M\vert \, \vert T \vert} = \sum\limits_{E_{1},M}\frac{1}{\vert \Aut E_{1}\vert \, \vert \Aut M\vert q^{\chi(M^{\vee}\otimes E_{1})}},
\end{equation*}
where $\chi(M^{\vee}\otimes E_{1})$ denotes the Euler characteristic of $(M^{\vee}\otimes E_{1}).$ The summation extends over all pairs of bundles $(E_{1},M)$ where $E_{1}$ is semistable of rank $n_{1},$ and $M$ has H-N filtration of length $(m-1)$ and has determinant equal to $L \otimes (\det E_{1})^{-1}$ such that $\mu(E_{1})> \mu_{1}(M)>...>\mu_{m-1}(M)$ with $r_{i}(M)=n_{i+1},\, \, i=1, 2,...,m-1.$ 

Now, let $J_{X}^{d_{1}}(\F_{q})$ be the variety of isomorphism classes of line bundles of  degree $d_{1}$ on $X$ over $\F_{q}$ and $\det E_{1}=L_{1}$ of degree $d_{1}.$ Therefore,
\begin{equation}\label{induction1}
C_{L}(n_{1}, n_{2},...,n_{m})= \sum\limits_{\substack{d_{1} \in \Z\\ \frac{d_{1}}{n_{1}}> \frac{d_{2}}{n_{2}}}} \sum\limits_{L_{1} \in J_{X}^{d_{1}}(\F_{q})} \frac{C_{L \otimes L_{1}^{-1}}(n_{2}, n_{3},...,n_{m})}{q^{\chi \(\tiny{
			\begin{matrix}
			n_{1} &n- n_{1}\\
			d_{1} & d-d_{1}
			\end{matrix}
		}\)}} \sum\frac{1}{\mid \Aut E_{1} \mid},
\end{equation}
where the last sum on the right hand side of the above equation is over isomorphism classes of semistable bundles $E_{1}$ of rank $n_{1}$ with fixed determinant $L_{1}$ of degree $d_{1},$ which is nothing but $\beta_{L_{1}}(n_{1}, d_{1})$ as defined before.\\
Also,
 \[
 \chi\left(M^{\vee}\otimes E_{1} \right) =  \chi \(\tiny{
	\begin{matrix}
	n_{1} &n- n_{1}\\
	d_{1} & d-d_{1}
	\end{matrix}
}\)
=nd_{1}-n_{1}d-n_{1}(n-n_{1})(g-1).
\]
So, for any partition $(n_{1},n_{2},...,n_{k})$ of $n,$ using Proposition \ref{Pbeta}$(i)$ in \eqref{induction1}, we write 
\begin{align*}
	C_{L} (n_{1},n_{2},...,n_{k})
	&= \sum\limits_{\substack{d_{1}\in \Z \\ \frac{d_{1}}{n_{1}}>\frac{d}{n}}}\, \frac{N_{q}(J_{X})q^{n_{1}(n-n_{1})(g-1)+n_{1}d}}{q^{nd_{1}}}\beta(n_{1},d_{1})C_{L\otimes L_{1}^{-1}} (n_{2},...,n_{k}).
\end{align*}

Using the Siegel's formula \eqref{Si2.1} in the definition of $\beta(n_{1},d_{1}),$ we get 
\begin{align*}
\beta(n_{1},d_{1})
&=
 \frac{1}{q-1}q^{(n_{1}^2-1)(g-1)}\zeta_{X}(2)\zeta_{X}(3)...\zeta_{X}(n_{1})\\
&+
 \sum\limits_{\substack{(p_{1},p_{2},...,p_{l})\\\sum\limits_{i=1}^{l}p_{i}=n_{1},\, l\geq 2}} C_{L_{1}}(p_{1},p_{2},...,p_{l}).
\end{align*}
Using induction hypothesis for rank $n_{1}<n,$ we have
\begin{align}\nonumber
\beta(n_{1},d_{1})&= \frac{1}{q-1}q^{(n_{1}^2-1)(g-1)}\zeta_{X}(2)\zeta_{X}(3)...\zeta_{X}(n_{1})\\\label{beta-n5.1}
&+
 o\left(S(n_{1})q^{(n_{1}^2-\frac{n_{1}}{c})(g-1)}\zeta_{X}(2)\zeta_{X}(3)...\zeta_{X}(n_{1}-1) \right),
\end{align}
where $S(n_{1})$ is the number of partitions of $n_{1}.$
Also, 
\begin{equation*}
\sum\limits_{\substack{d_{1}\in \Z \\ \frac{d_{1}}{n_{1}}>\frac{d}{n}}}\,\frac{1}{q^{nd_{1}}}\,=\,
O\bigg( 
\frac{1}{q^{dn_{1}}}
\bigg). 
\end{equation*}

Using \eqref{beta-n5.1} we can write,
\begin{align*}
	C_{L} (n_{1},n_{2},...,n_{k})
	&= O\left( \frac{N_{q}(J_{X})}{(q-1)}q^{(n^2+n_{1}^2-nn_{1}-\frac{n}{c}+\frac{n_{1}}{c}-1)(g-1)}\mathcal{A} \right)\\
	&+ O\left( S(n_{1})N_{q}(J_{X})q^{(n^{2}+n_{1}^{2}-nn_{1}-\frac{n}{c})(g-1)}\frac{\mathcal{A}}{\zeta_{X}(n_{1})}\right).  
\end{align*}
where 
\[
\mathcal{A}:= \zeta_{X}(2)\zeta_{X}(3)...\zeta_{X}(n_{1})\zeta_{X}(2)\zeta_{X}(3)...\zeta_{X}(n-n_{1}-1).
\]
Using Propositions \ref{log-zeta5.3} and \ref{log_Jac5.1}, we get
\begin{align*}
&\frac{C_{L} (n_{1},n_{2},...,n_{k})}{q^{(n^2-\frac{n}{c})(g-1)}\zeta_{X}(2)\zeta_{X}(3)...\zeta_{X}(n-1)}\\
&= O\left(q^{-1} q^{(n_{1}^2+\frac{n_{1}}{c}-nn_{1})(g-1)}(\log g)^{N\left( \frac{2c'(n-n_{1}-1)}{q^2}+3\right)}  \exp\left(\frac{2c'N}{\sqrt{q}}  (n-n_{1}-1) \right) \right)\\
&+ O\left(q^{(n_{1}^2-nn_{1}+1)(g-1)}(\log g)^{N\left( \frac{2c'(n-n_{1}-1)}{q^2}+3\right)} \exp\left(\frac{2c'N}{\sqrt{q}} \left( n-n_{1}-1\right) \right) \right).
\end{align*}
This completes the proof as the right hand side tends to $0$ for $g$ tending to $\infty.$
\end{proof}

Furthermore, using \eqref{Palpha} and Proposition \ref{Pdom5.1} we can rewrite the equation \eqref{Sigen2.2} as follows, 

\begin{rmk}\label{Siegel3}
	\begin{equation*}
	\begin{split}
	N_{q}(M_{L}(n,d)) \,=\,q^{(n^2-1)(g-1)}\zeta_{X}(2)\zeta_{X}(3)...\zeta_{X}(n)\\
	\,+\, o\left((q-1)S(n)q^{(n^2-\frac{n}{c})(g-1)}\zeta_{X}(2)\zeta_{X}(3)...\zeta_{X}(n-1) \right).  
	\end{split}                      
	\end{equation*}
\end{rmk} 
\noindent
for any constant $c>1.$\\
\noindent
\textbf{Final step of the proof of Theorem \ref{thm1.3}:}\\
\noindent
From Remark \eqref {Siegel3}, 
\noindent
Let \[T_{1}:=q^{(n^2-1)(g-1)}\zeta_{X}(2)\zeta_{X}(3)...\zeta_{X}(n)\]
\noindent
and 
\[T_{2}:=o\left((q-1)S(n)q^{(n^2-\frac{n}{c})(g-1)}\zeta_{X}(2)\zeta_{X}(3)...\zeta_{X}(n-1) \right).\]
\noindent 
We choose the constant $c$ such as $1<c<n.$
Taking logarithm on both sides of $T_{1},$ we get  
\begin{equation*}
\log{T_{1}} = (n^2-1)(g-1)\log{q} + \sum\limits_{k=2}\limits^{n}\log\zeta_{X}(k).
\end{equation*}
Using Proposition \ref{log-zeta5.2} with some constant $c^{\prime}=\max\left\lbrace c_{1}, c_{2}\right\rbrace ,$ and Proposition \ref{log-zeta5.3}, we observe that
\begin{align*}
& |\log{\left(N_{q}(M_{L}(n,d))\right)}- (n^2-1)(g-1)\log q|\\
&\le c'(n-1)N\left( \frac{1}{\sqrt{q}}+ \frac{\log{\log{g}}}{q^2}\right) 
+ O\left((q-1)S(n)q^{(1-\frac{n}{c})(g-1)}\left(\log{g} \right)^{\frac{c'N}{q^{n}}}\exp\left( \frac{c'N}{\sqrt{q}}\right)\right). 
\end{align*}
Now, if we consider $A:=N\left( \frac{1}{\sqrt{q}}+ \frac{\log{\log{g}}}{q^2}\right) $
and
$B:=(\log g)^{\frac{N}{q^{2}}}\exp\left( \frac{N}{\sqrt{q}}\right),$ then $\log B=A.$
Therefore, 
\begin{equation*}
 \log{\left(N_{q}(M_{L}(n,d))\right)}- (n^2-1)(g-1)\log q \,=\, O_{n,N}(A+q^{-\sigma g}B),
 \end{equation*}
 for a suitable absolute constant $\sigma >0,$ depending on $n.$
The theorem follows up on simplifying.

\subsection{Proof of Theorem \ref{thm1.4}}  
Next we focus our attention on the family of hyperelliptic curves $\H_{\gamma,q}.$ 
Let $H_{t}$ be a hyperelliptic curve of genus $g\geq 2$ given by the affine model  $H_{t}: y^{2} = F_{t}(x)$ 
with $F_{t}$ in $\H_{\gamma,q}$. 
Suppose $(n, d)=1.$ Corresponding to a hyperelliptic curve $H_{t},$ we use the notation $M_{L_{t}}(n,d)$ for the moduli space of stable vector bundles of rank two with fixed determinant $L$ of degree $d$ defined over $H_{t}.$
The function field $\F_{q}(H_{t})$ corresponding to the hyperelliptic curve $H_{t}$ is a Galois extension of the rational function field $\F_{q}(x)$ of degree two.
We denote $\F_{q}(H_{t})$ by $K'$ and $\F_{q}(x)$ by $K$ for simplicity. Let $\chi=(\frac{.}{F_{t}})$ 
denote the Legendre symbol generating 
$Gal(K'/K).$
As discussed in subsection \ref{zeta function basics}, we have
\begin{equation}\label{L-chi}
L(s,\chi)=\prod\limits_{l=1}\limits^{2g}(1-\sqrt{q}e(\theta_{l,H_{t}})q^{-s}).
\end{equation}

The Euler product of $L-$function is given by
\begin{equation}\label{Euler}
L(s,\chi) = \prod\limits_{P}(1 - \chi{(P)}\mid P \mid ^{-s})^{-1}.
\end{equation}

Taking logarithmic derivatives of (\ref{L-chi}) and (\ref{Euler}) and equating coefficients of $q^{-ms}$
we get

\begin{eqnarray*}
	\sum\limits_{l=1}\limits^{2g}-e(m\theta_{l,H_{t}})
	&=& \sum\limits_{f \in \mathcal{F}_{m}}q^{-m/2}\Lambda(f)\left(\frac{F_{t}}{f}\right)
\end{eqnarray*}
where $\mathcal F_m$ is the set of all monic polynomials of degree $m$. 
Now for any $F_{t} \;\text{in}\; \hh,$ 
\begin{equation*}
\left(\frac{F_{t}}{P_\infty}\right) =
\begin{cases}
& 1  \text{ if } \deg(F_{t})\equiv 0 \,(\text{mod}\, 2)\\
& 0   \text{ otherwise}. 
\end{cases}
\end{equation*}
Defining $\delta_{\gamma/2}=1$ if $\gamma$ is even and $0$ otherwise, we note that
\[\sum\limits_{\substack{f =\infty \\ }}q^{-m/2}\Lambda(f)\left(\frac{F_{t}}{f}\right) 
=\, q^{-m/2}\delta_{\gamma/2}. 
\]

So
\begin{equation}\label{e1}
\sum\limits_{l=1}\limits^{2g}-e(m\theta_{l,H_{t}})
= \sum\limits_{\substack{f \neq \infty  \\ \deg f=m}}q^{-m/2}\Lambda(f)\left(\frac{F_{t}}{f}\right) 
+ q^{-m/2}\delta_{\gamma/2}.
\end{equation}
Proceeding as in the proof of Theorem \ref{thm1.3} we get 
\begin{equation}\label{R5.2}
\begin{split}
\log{\left(N_{q}\left( M_{L_{t}}(n,d)\right) \right)} -(n^2-1)(g-1)\log{q}
=\sum\limits_{k=2}^{n}\log{\zeta_{H_{t}}(k)} \\
+ O\left((q-1)S(n)q^{(1-\frac{n}{c})(g-1)}\left(\log{g} \right)^{\frac{c'N}{q^{n}}}\exp\left( \frac{c'N}{\sqrt{q}}\right)   \right). 
\end{split}
\end{equation}
for some constant $1<c<n.$

For a fixed positive integer $Z$, we write
\begin{equation}\label{total sum}
\sum\limits_{k=2}^{n}\log{\zeta_{H_{t}}(k)} - \sum\limits_{k=2}^{n}(2k-1)\log{q} + \sum\limits_{k=2}^{n}\log\{(q^k-1)(q^{k-1}-1)\}=\epsilon_{1,Z} + \epsilon_{2,Z}
\end{equation}
	where,
\begin{equation}\label{gen5.1}
\epsilon_{1,Z} = -\sum\limits_{m\leq Z}\left( \sum\limits_{k=2}^{n}q^{-\frac{(2k-1)}{2}m}\right) m^{-1}\sum\limits_{l=1}\limits^{2g}e(m\theta_{l,X}),
\end{equation}

and
\begin{equation}\label{gen5.2}
\epsilon_{2,Z} = -\sum\limits_{m> Z}\left( \sum\limits_{k=2}^{n}q^{-\frac{(2k-1)}{2}m}\right) m^{-1}\sum\limits_{l=1}\limits^{2g}e(m\theta_{l,X}).
\end{equation}

Observe that, \begin{equation}\label{R5.1}
\sum\limits_{k=2}^{n}q^{-\frac{(2k-1)}{2}m} =q^{-3m/2}(1+O(1/q)).
\end{equation}

Using \eqref{e1} and \eqref{R5.1} in \eqref{gen5.1}, we get
\begin{eqnarray*}
	\epsilon_{1,Z} &=&\sum\limits_{m\leq Z}q^{-3m/2} m^{-1}\sum\limits_{\substack{f \neq \infty \\ \deg f=m}}q^{-m/2}\Lambda(f)\left(\frac{F_{t}}{f}\right)\\ 
	&+& 
	\sum\limits_{m\leq Z}
	\bigg( 
	\sum\limits_{k=2}^{n}q^{-\frac{(2k-1)}{2}m}
	\bigg)
	q^{-m/2}m^{-1}\delta_{\gamma/2}\\
	&+&
	O
	\bigg(
	\frac{1}{q}\sum\limits_{m\leq Z}q^{-2m}m^{-1}\sum\limits_{\substack{f \neq \infty  \\ \deg f=m}}
	\Lambda(f)\left(\frac{F_{t}}{f}\right)
	\bigg).
	\end{eqnarray*}

Let \begin{equation}\label{main5.1}
\bigtriangleup_{Z}(F_{t}):= 
\sum\limits_{m\leq Z}q^{-2m}m^{-1}\sum\limits_{\substack{f \neq \infty  \\ \deg f=m}}
\Lambda(f)\left(\dfrac{F_{t}}{f}\right).
\end{equation}

We see,
\begin{equation}\label{upper-bounds5.1}
\sum\limits_{m\leq Z}q^{-2m}m^{-1}\sum\limits_{\substack{f \neq \infty  \\ \deg f=m}}
\Lambda(f)\left(\frac{F_{t}}{f}\right) \leq \frac{1}{q}(1+ \log{Z}).
\end{equation}
Also,
 \begin{equation*}
\sum\limits_{m\leq Z}
\bigg( 
\sum\limits_{k=2}^{n}q^{-\frac{(2k-1)}{2}m}
\bigg)
q^{-m/2}m^{-1}\delta_{\gamma/2}= -\delta_{\gamma/2}\sum\limits_{k=2}^{n}\log(1-1/q^k)-\delta_{\gamma/2}\sum\limits_{m>Z}\frac{1}{m}\sum\limits_{k=2}^{n}\frac{1}{q^{km}}.
\end{equation*}
After simplification we can write, 
\begin{equation*}
\epsilon_{1,Z}= \bigtriangleup_{Z}(F_{t}) 
+ 
O
\bigg( 
\frac{1}{q^2}
\left( 
1 + \log{Z}
\right) 
\bigg)  
-\delta_{\gamma/2}
\sum\limits_{k=2}^{n}\log(1-1/q^k)
-\delta_{\gamma/2}
\sum\limits_{m>Z}\frac{1}{m}
\sum\limits_{k=2}^{n}\frac{1}{q^{km}}.
\end{equation*}
Therefore rearranging the terms in \eqref{total sum} and based on above estimates for $\epsilon_{1, Z}$, we obtain
\begin{align}
&\sum\limits_{k=2}^{n}\log{\zeta_{H_{t}}(k)} \nonumber
- \sum\limits_{k=2}^{n}(2k-1)\log{q} 
+ \sum\limits_{k=2}^{n}\log\{(q^k-1)(q^{k-1}-1)\} 
+ \delta_{\gamma/2}\log\left\lbrace\prod\limits_{k=2}^{n} (1-1/q^k)\right\rbrace\\
&=\bigtriangleup_{Z}(F_{t}) + \epsilon_{Z,F_{t}}+O\left( 1/q^2\left( 1+\log{Z}\right) \right) ,
\label{gen5.3}
\end{align}

where, 
\begin{equation*}
\epsilon_{Z,F_{t}}=\epsilon_{2,Z} - \delta_{\gamma/2}\sum\limits_{m>Z}\frac{1}{m}\sum\limits_{k=2}^{n}\frac{1}{q^{km}}.
\end{equation*}
Using \eqref{gen5.3}, we write equation \eqref{R5.2} in a simpler form 
\begin{equation}\label{N_F5.1}
N_{F_{t}} = \bigtriangleup_{Z}(F_{t}) + \epsilon_{Z}(F_{t})
\end{equation}
where \begin{equation*}\label{total5.1}
\begin{split}
N_{F_{t}}:= \log{\left(N_{q}\left( M_{L_{t}}(n,d)\right) \right)} -(n^2-1)(g-1)\log{q} 
- \log{\left(\frac{q^{(n^2-1)}}{\prod\limits_{k=2}^{n}(q^{k-1}-1)(q^k-1)}\right)} \\+ \delta_{\gamma/2}\log{\left(\prod\limits_{k=2}^{n}1-1/q^k \right)},
\end{split}
\end{equation*}

and
\begin{equation}\label{error5.1}
\epsilon_{Z}(F_{t}):= \epsilon_{Z,F_{t}} + O\left(q^{(1-\frac{n}{c})(g-1)}\left(\log{g} \right)^{\frac{c'N}{q^{n}}}\exp\left( \frac{c'N}{\sqrt{q}}\right)\right) +O(1/q^2(1+\log{Z})).
\end{equation}
It is easy to see that, 
\begin{equation}\label{upper-bounds5.2}
\mid\epsilon_{Z,F_{t}}\mid = \bigcirc\left(\frac{g}{Z} q^ {-3Z/2}\right).
\end{equation}
{\bf The $r$th moment of $\bigtriangleup_{Z}$:}\\
For any function $\phi: \H_{\gamma, q} \rightarrow \C$, we denote the mean value of $\phi$ by 
\[
\left\langle  \phi  \right\rangle := \frac{1}{\#\H_{\gamma, q}} \sum\limits_{F_{t}\in \H_{\gamma, q}} \phi(F_{t}).
\]
If we see the proof of Theorem $1.2$ in \cite{ASA}, the quantity $\bigtriangleup_{Z}(F_{t})$ in equation \eqref{main5.1}, is exactly same as in equation $(4.5)$ in \cite{ASA}. Using similar procedure as in \cite{ASA}, for a positive integer $r \leq \log \gamma,$ and the choice of 
$Z\geq \gamma/3$ we conclude that the $r$th moment of $\bigtriangleup_{Z}$,
\begin{equation}\label{Delta-H}
\left\langle (\bigtriangleup_{Z})^r\right\rangle= H(r)+T 
\end{equation}
where, 
\begin{equation}\label{H(r)5.1}
H(r) := 
\sum\limits_{\substack{m_i\geq 1\\ 1\leq i\leq r}} 
\prod\limits_{i=1}\limits^{r}q^{-2m_{i}}m_{i}^{-1}
\sum\limits_{\substack{\deg f_{i}=m_{i}\\ 1\leq i \leq r\\f_{1}f_{2}...f_{r}=h^{2}}}
\Lambda(f_{1})\Lambda(f_{2})....\Lambda(f_{r})
\prod\limits_{P\mid h} (1- \mid P \mid ^{-1})(1+ \mid P \mid^{-2})
\end{equation}
and, $T=O(q^{-Z}).$
Taking $r^{\text{th}}$ power on both sides of \eqref{N_F5.1}we get 
\begin{eqnarray*}
	(N_{F_{t}})^{r} &=& (\bigtriangleup_{Z}(F_{t}))^{r} + \sum\limits_{l=1}\limits^{r}{r \choose l}(\bigtriangleup_{Z}(F_{t}))^{r-l}(\epsilon_{Z}(F_{t}))^{l}.
\end{eqnarray*}

Therefore
\begin{eqnarray*}
	\left\langle (N_{F_{t}})^{r}\right\rangle &=& \left\langle (\bigtriangleup_{Z}(F_{t}))^{r}\right\rangle +  \sum\limits_{l=1}\limits^{r}{r \choose l}\left\langle (\bigtriangleup_{Z}(F_{t}))^{r-l}(\epsilon_{Z}(F_{t}))^{l}\right\rangle. 
\end{eqnarray*} 
We use \eqref{upper-bounds5.1} in \eqref{main5.1} and \eqref{upper-bounds5.2} in \eqref{error5.1}, to get
\begin{eqnarray*}
	\sum\limits_{l=1}\limits^{r}{r \choose l}(\bigtriangleup_{Z}(F_{t}))^{r-l}(\epsilon_{Z}(F_{t}))^{l}
	&\ll & (\bigtriangleup_{Z}(F_{t}))^{r-1}(\epsilon_{Z}(F_{t}))\,\ll\, \frac{(\log{Z})^{r-1}}{q^{r-1}}\frac{g}{Z}q^{-3Z/2}. 
\end{eqnarray*} 
From \eqref{Delta-H} and the above estimate, we get 
\begin{equation*}
	\left\langle (N_{F_{t}})^{r}\right\rangle =H(r) + \bigcirc(q^{-Z})+ \bigcirc\left(\frac{g q^{-r-3Z/2 +1}}{Z}(\log Z)^{r-1}\right).
\end{equation*}

If $q$ is fixed then for each fixed positive integer $r$,
\begin{equation*}
\lim\limits_{\gamma\rightarrow \infty} \left\langle (N_{F_{t}})^{r}\right\rangle = H(r).
\end{equation*}

{\bf The distribution function:}\\

Suppose $\mathcal{R}$ is a random variable with the $r$-th moment
\begin{equation*}
\mathbb{E}(\mathcal{R}^{r}) = H(r) \qquad\,\,\forall r \in \mathbb{N}. 
\end{equation*}
The characteristic function of the random variable $\mathcal{R}$ is given by
\begin{equation*}
\phi(\tau)= \mathbb{E}(e^{i\tau\mathcal{R}})
= 1+ \sum\limits_{m=1}\limits^{\infty} \frac{(i\tau)^{m}}{m!}\mathbb{E}(\mathcal{R}^{m})
= 1+ \sum\limits_{m=1}\limits^{\infty} \frac{(i\tau)^{m}}{m!}H(m).
\end{equation*} 
The following result gives another representation of $H(r).$
\begin{prop}\label{H(r)1}
	For any positive integer $r \geq 1$, we have
	$$ H(r)= \sum_{m=1}^{r} \frac{r!}{2^{m}m!}
	\sum_{\substack{\sum_{i=1}^{t}\lambda_{i}=r \\ \lambda_{i}\geq 1}} 
	\sum_{\substack{P_{i}\text{distinct}\\ 1\leq i\leq m}}
	\prod_{i=1}^{m}\frac{u_{P_{i}}^{\lambda_{i}}+ (-1)^{\lambda_{i}}v_{P_{i}}^{\lambda_{i}}}
	{\lambda_{i}!(1+ \mid P_{i}\mid ^{-1})}$$
	where the sum on the right hand side is over all positive integer 
	$\lambda_{i}, i= 1, 2, ..., t$ such that $\sum\limits_{i=1}\limits^{m}\lambda_{i}=r$ 
	and over all distinct monic, irreducible polynomials $P_{i} \;\text{in}\; \F_{q}[x]$ with 
	\begin{eqnarray*}
		u_{P} &=& -\log (1-\mid P \mid^{-2}),\\
		v_{P} &=& \log(1+\mid P \mid^{-2})\quad \forall P\in \F_{q}[x].
	\end{eqnarray*}
\end{prop}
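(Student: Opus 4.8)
The plan is to unfold the definition \eqref{H(r)5.1} of $H(r)$ and reorganise the multiple sum according to the distinct primes underlying the $f_i$. First I would use that $\Lambda(f_i)$ vanishes unless $f_i=P_i^{k_i}$ is a power of a monic irreducible $P_i$, so each factor collapses nicely: writing $m_i=k_i\deg P_i$ and $\Lambda(P_i^{k_i})=\deg P_i$ gives
\[
q^{-2m_i}\,m_i^{-1}\,\Lambda(f_i)=\frac{1}{k_i}\,|P_i|^{-2k_i},\qquad |P_i|=q^{\deg P_i}.
\]
Thus $H(r)$ becomes a sum over $r$-tuples of prime powers $P_1^{k_1},\dots,P_r^{k_r}$ subject to $\prod_i P_i^{k_i}=h^2$, weighted by $\prod_i \frac{1}{k_i}|P_i|^{-2k_i}$ together with the Euler product indexed by $P\mid h$.

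Next I would group the $r$ indices by their underlying prime. This records a set partition of $\{1,\dots,r\}$ into $m$ nonempty blocks, together with a labelling of the blocks by $m$ distinct monic irreducibles. The crucial point is that the square condition $\prod P_i^{k_i}=h^2$ is local: it holds if and only if, within each block, the exponents sum to an \emph{even} number. Since every block is nonempty, this even sum is $\ge 2$, so each block prime genuinely divides $h$ and contributes exactly one Euler factor, namely $(1+|P|^{-1})^{-1}$. For a single block of size $\lambda$ carrying the prime $P$, the inner sum is therefore
\[
\sum_{\substack{k_1,\dots,k_\lambda\ge 1\\ k_1+\cdots+k_\lambda\ \mathrm{even}}}\ \prod_{s=1}^{\lambda}\frac{1}{k_s}\,|P|^{-2k_s}.
\]

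I would then evaluate this block sum by the parity (sign) trick. With $x=|P|^{-2}$, the projection $\tfrac12\bigl[(\sum_{k\ge1}\tfrac{x^k}{k})^{\lambda}+(\sum_{k\ge1}\tfrac{(-x)^k}{k})^{\lambda}\bigr]$ isolates exactly the terms with $\sum k_s$ even. Since $\sum_{k\ge1}\tfrac{x^k}{k}=-\log(1-|P|^{-2})=u_P$ and $\sum_{k\ge1}\tfrac{(-x)^k}{k}=-\log(1+|P|^{-2})=-v_P$, the block sum equals $\tfrac12\bigl(u_P^{\lambda}+(-1)^{\lambda}v_P^{\lambda}\bigr)$, so the full weight of a size-$\lambda$ block carrying $P$ is $\tfrac12\bigl(u_P^{\lambda}+(-1)^{\lambda}v_P^{\lambda}\bigr)/(1+|P|^{-1})$. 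Finally I would assemble the blocks. Passing from unordered prime-labelled set partitions into $m$ blocks to ordered data costs a factor $1/m!$; for fixed block sizes $\lambda_1,\dots,\lambda_m$ (a composition of $r$, so the ``$t$'' in the statement is $m$) the number of ordered set partitions is the multinomial $r!/(\lambda_1!\cdots\lambda_m!)$, and because the block weight depends only on the size the element-level data drops out. Collecting the $\tfrac12$ from each of the $m$ parity projections into $2^{-m}$ reproduces precisely $\frac{r!}{2^m m!}\sum_{\sum\lambda_i=r}\sum_{P_i\ \mathrm{distinct}}\prod_i \frac{u_{P_i}^{\lambda_i}+(-1)^{\lambda_i}v_{P_i}^{\lambda_i}}{\lambda_i!\,(1+|P_i|^{-1})}$.

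The main obstacle is the bookkeeping in this last assembly step: keeping straight the three combinatorial factors — $1/m!$ for unordering the prime-labelled blocks, $r!/\prod_i\lambda_i!$ for the multinomial choice of which indices occupy which block, and $2^{-m}$ for the $m$ parity projections — and checking that they combine to exactly $\frac{r!}{2^m m!}$ with a clean sum over compositions of $r$ and over ordered tuples of distinct primes. The generating-function computation of the individual block sum is routine once the even-sum constraint has been isolated; the only point requiring care there is the identification of the per-prime Euler factor $(1+|P|^{-1})^{-1}$ coming from the product over $P\mid h$.
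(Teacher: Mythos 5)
Your proposal is correct, and it differs from the paper in an essential way: the paper offers no argument at all for Proposition \ref{H(r)1}, saying only that it is ``essentially same as Proposition 1 of \cite{XiZa}'' and skipping the proof. Your derivation --- collapsing each $\Lambda(f_i)$ to prime powers so that $q^{-2m_i}m_i^{-1}\Lambda(f_i)=\frac{1}{k_i}\mid P_i\mid^{-2k_i}$, grouping the $r$ indices into a set partition labelled by distinct primes, observing that $f_1\cdots f_r=h^2$ is equivalent to each block having even exponent sum, isolating that parity as $\frac12\left(u_P^{\lambda}+(-1)^{\lambda}v_P^{\lambda}\right)$ via the two logarithmic series, and then combining the factors $1/m!$, $r!/\prod_i\lambda_i!$ and $2^{-m}$ --- is exactly the standard argument behind the cited result, and the bookkeeping does come out to $\frac{r!}{2^m m!}$ with a sum over compositions and ordered tuples of distinct primes (you are also right that the ``$t$'' in the statement is a typo for $m$). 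What the paper's route buys is brevity at the cost of deferring everything to an external source; what yours buys is a self-contained, checkable proof. The only small addition worth making is a remark that all sums converge absolutely (since $u_P,v_P=O(\mid P\mid^{-2})$ and $\sum_P \mid P\mid^{-2}<\infty$ over monic irreducibles), which licenses the rearrangements.

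One point should be made explicit rather than passed over. Your proof takes the per-prime Euler factor in the definition of $H(r)$ to be $(1+\mid P\mid^{-1})^{-1}$, but equation \eqref{H(r)5.1} as printed carries the factor $(1-\mid P\mid^{-1})(1+\mid P\mid^{-2})$ for each $P\mid h$, and $(1-x)(1+x^2)=1-x+x^2-x^3$ is not $\frac{1}{1+x}$; they agree only to third order in $x=\mid P\mid^{-1}$. Taken literally, \eqref{H(r)5.1} would make your argument output $(1-\mid P_i\mid^{-1})(1+\mid P_i\mid^{-2})$ in place of $\frac{1}{1+\mid P_i\mid^{-1}}$, i.e.\ a statement different from the Proposition. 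The factor $(1+\mid P\mid^{-1})^{-1}$ is certainly the intended one --- it is the limiting density of square-free polynomials coprime to $P$, and it is what appears in the denominators of the Proposition, of the characteristic functions in Theorems \ref{thm1.4}--\ref{thm1.6}, and of Proposition 1 of \cite{XiZa} --- so \eqref{H(r)5.1} contains a typo; but since \eqref{H(r)5.1} is your proof's only source for this factor, you should state the correction instead of silently substituting it.
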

\begin{proof}
	This is essentially same as Proposition 1 of \cite{XiZa} with above definitions of 
	$ u_{P}$ and $ v_{P}$. Hence we skip the proof. 
\end{proof}

We know that characteristic function uniquely determines the distribution function. So  
applying Proposition \eqref{H(r)1} for $H(n)$ we find that
\begin{eqnarray*}  
	\phi(\tau)  &=& 
	1
	+
	 \sum_{n=1}^{\infty} \frac{(i\tau)^{n}}{n!}\sum_{r=1}^{n} \frac{n!}{2^{r}r!}
	\sum_{\substack{\sum_{j=1}^{r}\lambda_{j}=n\\ \lambda_{j}\geq 1}} 
	\sum_{\substack{P_{j}\text{distinct}\\1\leq j\leq r}}
	\prod_{j=1}\limits^{r}\frac{u_{P_{j}}^{\lambda_{j}}
		+ (-1)^{\lambda_{j}}v_{P_{j}}^{\lambda_{j}}}{\lambda_{j}!(1+ \mid P_{j}\mid ^{-1})}.
\end{eqnarray*}

Changing the order of summation we get,
\begin{eqnarray*}
	\phi(\tau)&=& 1
	+
	 \sum\limits_{r=1}\limits^{\infty}\frac{1}{2^{r}r!}
	\sum\limits_{\substack{P_{j}\text{distinct}\\1\leq j\leq r}}
	\prod\limits_{j=1}\limits^{r}
	\left(
	\sum\limits_{\lambda_{j}=1}\limits^{\infty}
	\frac{(i\tau)^{\lambda_{j}}
		\left(
		u_{P_{j}}^{\lambda_{j}}+ (-1)^{\lambda_{j}}v_{P_{j}}^{\lambda_{j}}
		\right)}
	{\lambda_{j}!(1+ \mid P_{j}\mid ^{-1})}
	\right)\\
	&=& 
	1+
	 \sum\limits_{r=1}\limits^{\infty}
	 \frac{1}{2^{r}r!}
	 \sum\limits_{\substack{P_{j}\text{distinct}\\1\leq j\leq r}}
	 \prod\limits_{j=1}\limits^{r}
	 \left(
	 \frac{(1-\mid P_{j} \mid^{-2})^{-i\tau}
	 	+ 
	 	(1+\mid P_{j} \mid^{-2})^{-i\tau}-2}{(1+ \mid P_{j}\mid ^{-1})}
	 \right).
\end{eqnarray*}

This completes the proof of $(1)$ of Theorem \ref{thm1.4}.

The next result gives an asymptotic formula for $H(r).$
\begin{prop}\label{H(r)2}
	For a fixed positive integer $r$,\\
	$\ds{H(r) = \dfrac{\delta_{r/2}r!}{2^{r/2}(r/2)!}q^{-3r/2} + \bigcirc_{r}(q^{-(r+1)3/2})}\;\text{as}\; q\rightarrow \infty.$\\ 
\end{prop}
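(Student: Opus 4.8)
The plan is to begin from the closed expression for $H(r)$ in Proposition~\ref{H(r)1} and to isolate its dominant term by a power-counting argument in $q$, following the strategy of Xiong and Zaharescu in \cite{XiZa}. The first step is to record the sizes of the two building blocks. Since $u_P=-\log(1-|P|^{-2})$ and $v_P=\log(1+|P|^{-2})$, both satisfy $u_P,v_P=|P|^{-2}+O(|P|^{-4})$, so for a monic irreducible $P$ of degree $d$ one has $u_P,v_P\asymp q^{-2d}$. The decisive feature is the parity dichotomy in the numerator $u_P^{\lambda}+(-1)^{\lambda}v_P^{\lambda}$: for even $\lambda$ the two terms reinforce, giving $u_P^{\lambda}+v_P^{\lambda}=2|P|^{-2\lambda}\bigl(1+O(|P|^{-2})\bigr)$, whereas for odd $\lambda$ the leading terms cancel, and since $u_P-v_P=-\log(1-|P|^{-4})=O(|P|^{-4})$ one obtains $u_P^{\lambda}-v_P^{\lambda}=O(|P|^{-2\lambda-2})$. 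Thus each odd exponent is penalised by an extra factor $|P|^{-2}$ relative to an even one.

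Next I would run the power count over the data consisting of the number of distinct primes $m$, a composition $\lambda_1+\cdots+\lambda_m=r$, and the degrees $d_i=\deg P_i$. Summing each factor over the $\asymp q^{d_i}/d_i$ monic irreducibles of degree $d_i$ and using the estimates above, the $i$-th factor contributes to the exponent of $q$ the amount $d_i(1-2\lambda_i)$ when $\lambda_i$ is even and $-d_i(2\lambda_i+1)$ when $\lambda_i$ is odd; in both cases this is largest for $d_i=1$. Maximising the total exponent subject to $\sum_i\lambda_i=r$ then forces every $\lambda_i=2$, hence $m=r/2$, which is possible only when $r$ is even; this is exactly the origin of the factor $\delta_{r/2}$. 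For even $r$ the optimal configuration is $r/2$ distinct linear primes each carrying exponent $2$: the combinatorial prefactor in Proposition~\ref{H(r)1} is $\tfrac{r!}{2^{r/2}(r/2)!}$, each of the $\asymp q^{r/2}$ ordered choices of distinct linear primes contributes $\prod_i \tfrac{u_{P_i}^{2}+v_{P_i}^{2}}{2!\,(1+|P_i|^{-1})}=q^{-2r}\bigl(1+O(q^{-1})\bigr)$, and multiplying these produces the main term $\tfrac{r!}{2^{r/2}(r/2)!}\,q^{-3r/2}$. When $r$ is odd the cheapest admissible configuration carries a single odd exponent $\lambda_i=1$ alongside $(r-1)/2$ exponents equal to $2$, which already sits at size $q^{-3(r+1)/2}$, so no main term survives.

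Finally I would bound the contribution of every remaining configuration — those using a prime of degree $\ge 2$, an exponent $\ge 3$, or, when $r$ is even, fewer than $r/2$ primes — by showing that each such deviation from the optimum strips off at least a further power of $q$; here the prime polynomial theorem in the form $\#\{P:\deg P=d\}\le q^{d}/d$ weighs the number of summands against the decay of the weights, and summing the resulting geometric-type series leaves a remainder of strictly smaller order than the main term. I expect this uniform remainder estimate to be the main obstacle: one must organise the multi-sum over $(m,\lambda,(d_i))$ so that the loss incurred by each non-optimal choice is simultaneously visible, and verify that the odd-exponent cancellation really is of size $|P|^{-2\lambda-2}$ rather than merely $|P|^{-2\lambda}$, since it is this cancellation that both forces the $\delta_{r/2}$ and controls the size of the error. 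The extraction of the leading coefficient is then routine; essentially all the work lies in the bookkeeping of the remainder.
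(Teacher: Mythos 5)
Your proof follows exactly the route the paper itself points to: the paper offers no argument for this proposition beyond the remark that it is an exact analogue of Proposition 3 of \cite{XiZa}, and your power count is precisely that argument adapted to the weights $u_P,v_P=|P|^{-2}+O(|P|^{-4})$. The parity dichotomy, the optimality of the configuration with all $\lambda_i=2$ on distinct linear primes (hence the factor $\delta_{r/2}$ and the coefficient $\frac{r!}{2^{r/2}(r/2)!}$), and the size $q^{-3(r+1)/2}$ of the cheapest configuration for odd $r$ are all correct.

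There is, however, a genuine mismatch between what your argument yields and what the statement asserts, and you pass over it. Your own main-configuration computation carries the factor $1+O(q^{-1})$: each factor equals $\frac{u_P^2+v_P^2}{2(1+q^{-1})}=q^{-4}\bigl(1-q^{-1}+O(q^{-2})\bigr)$ for a linear prime $P$, and the number of ordered tuples of distinct linear primes is $q(q-1)\cdots(q-r/2+1)=q^{r/2}\bigl(1+O(q^{-1})\bigr)$. So for even $r$ what your sketch proves is $H(r)=\frac{r!}{2^{r/2}(r/2)!}\,q^{-3r/2}+O_r\bigl(q^{-3r/2-1}\bigr)$, and this error exponent cannot be improved to the stated $-3(r+1)/2=-3r/2-3/2$: already for $r=2$ a direct expansion gives $H(2)=q^{-3}-q^{-4}+O(q^{-5})$, the term $-q^{-4}$ coming from $\frac{1}{1+q^{-1}}=1-q^{-1}+\cdots$, so the true remainder has exact order $q^{-3r/2-1}$. (For $r\geq 4$ the configuration $\lambda=(4,2,\dots,2)$ on linear primes also contributes at this order.) The stated error term comes from tripling both exponents in Proposition 3 of \cite{XiZa}; in their setting the claimed relative error $O(q^{-1/2})$ absorbs the genuine $O(q^{-1})$ corrections, but here the claimed relative error $O(q^{-3/2})$ does not. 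A complete write-up should therefore either prove the corrected statement with remainder $O_r(q^{-3r/2-1})$ — which your sketch does, once the bookkeeping over non-optimal configurations is finished — or record explicitly that the stated bound fails for even $r$. For odd $r$ the stated bound is correct and your analysis establishes it. None of this damages the paper: in the proof of Theorem \ref{thm1.4}(2) only $H(r)=\frac{\delta_{r/2}r!}{2^{r/2}(r/2)!}q^{-3r/2}\bigl(1+o(1)\bigr)$ as $q\to\infty$ is used, and that follows from the weaker remainder.
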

\begin{proof}
	This is an exact analogue of Proposition 3 of \cite{XiZa} and we again skip the proof. 
\end{proof}
Now assume that both $\gamma,q \rightarrow \infty.$ For any fixed positive integer $r$, 
using Proposition \ref{H(r)2}, and \eqref{Delta-H} with the choice of $Z\geq \gamma/3,$ we get
\[
\left\langle (N_{F_{t}})^{r}\right\rangle 
= H(r) + \bigcirc(q^{-\gamma/3})
= \frac{\delta_{r/2}r!}{2^{r/2}(r/2)!}q^{-3r/2} 
+ \bigcirc_{r}(q^{-(r+1)3/2}+ q^{-\gamma/3}).
\]
Considering 
$q^{3/2}(\log\left(N_{q}\left( M_{L_{t}}(n,d)\right)  \right) - (n^2-1)(g-1)\log {q} )$ as a random variable on the space $\hh$, as both $\gamma,$ and $q \rightarrow \infty,$ we see that
all its moments are asymptotic to the corresponding moments of a 
standard Gaussian distribution where the odd moments vanish and the even moments are 
\begin{equation*}
\frac{1}{\sqrt{2\pi}}\int_{-\infty}^{\infty}\tau^{2r}e^{-\tau^{2}/2} d\tau = \frac{(2r)!}{2^{r}r!}.
\end{equation*} 
Hence the corresponding characteristic function converges to characteristic function of Gaussian distribution. Finally using Continuity theorem ( see Theorem $3.3.6$ in \cite{Du}), we obtain result $(2)$ of Theorem \ref{thm1.4}.

\section{Distribution on $M^{s}_{\mathcal{O}_{H_{t}}}(2,0)$}\label{distribution Ms}

In this section, all the asymptotic formulas we will be considering hold for the sufficiently large genus of hyperelliptic curves.
\subsection{Proof of Theorem \ref{thm1.5}}

Over the family of hyperelliptic curves $\hh,$ we recall from Proposition \ref{Ms2.1} that, the number of $\F_{q}$-rational points over the moduli space of stable vector bundles $M^{s}_{\mathcal{O}_{H_{t}}}(2,0)$ defined over the smooth projective hyperelliptic curve $H_{t}: y^{2}=F_{t}(x)$ in $\hh,$ is given by the following expression:
\begin{equation}\label{points over Ms}
N_{q}(M^{s}_{\mathcal{O}_{H_{t}}}(2,0))= q^{3g-3}\zeta_{H_{t}}(2)-\frac{\left( q^{g+1}-q^2+q\right) }{(q-1)^2(q+1)}N_{q}(J_{H_{t}})-\frac{1}{2(q+1)}N_{q^2}(J_{H_{t}})+ \frac{1}{2(q+1)}2^{2g}.
\end{equation}

For simplicity we denote,
\[
T_{1}:= q^{3g-3}\zeta_{H_{t}}(2)
\]
and
\[
T_{2}:= \dfrac{\left( q^{g+1}-q^2+q\right) }{(q-1)^2(q+1)}N_{q}(J_{H_{t}})- \dfrac{1}{2(q+1)}N_{q^2}(J_{H_{t}}) + \frac{1}{2(q+1)}2^{2g}
\]
Using Proposition \ref{log-zeta5.3} and Proposition \ref{log_Jac5.1} we see,
\[T_{2}=O(q^{2g}(\log{g})^{cN}),\]
for some absolute constant $c>0.$
Therefore,
\begin{equation*}
N_{q}(M^{s}_{\mathcal{O}_{H_{t}}}(2,0))
=T_{1}\left( 1+\frac{T_{2}}{T_{1}}\right) 
=q^{3g-3}\zeta_{H_{t}}(2)\left( 1+O\left( q^{-g}(\log{g})^{c'N} \right)\right) 
\end{equation*}
for some $c^{\prime}>c.$
Taking logarithm on both sides of the above equation, we have
\begin{equation}\label{eqn Ms2}
\log{N_{q}(M^{s}_{\mathcal{O}_{H_{t}}}(2,0))} - (3g-3)\log q =\log{\zeta_{H_{t}}(2)}+  O\left( q^{-g}(\log{g})^{c'N}\right).
\end{equation} 
We observe from equation \eqref{eqn Ms2} that, for $g \rightarrow\infty,$
Theorem \ref{thm1.5} follows by invoking Theorem $1.2$ in \cite{ASA}.

\section{Distribution on $\N$} \label{distribution N tilde}
\subsection{Proof of Theorem \ref{thm1.6}}

We recall from equation \eqref{pointN2.2} that over a smooth projective hyperelliptic curve $H_{t}: y^{2}=F_{t}(x)$ in $\hh,$
\begin{equation}\label{pointN}
N_{q}({\N}) \,=\, N_{q}(M^{s}_{\mathcal{O}_{H}}(2,0)) + N_{q}(Y) + 2^{2g}N_{q}(R)+ 2^{2g}N_{q}(S).
\end{equation}
Further, from \eqref{pointY} we write the $\F_{q}$-rational points on $Y$ as follows,
\begin{equation*}
N_{q}(Y) = \left(  \frac{q^{2g-3}-q}{2(q-1)(q+1)}\right)N_{q}(J_{H_{t}}) 
+\left(\frac{q^{2g-2}-1}{2(q-1)(q+1)} \right)N_{q^2}(J_{H_{t}})
-\left( \frac{q^{2g-3}-1}{2(q-1)}\right)2^{2g}. 
\end{equation*}
Also, we have
\begin{equation*}
N_{q}(G(2,g)) = \frac{(q^{g}-1)(q^{g-1}-1)}{(q-1)^{2}(q+1)},
\end{equation*}
and,
\begin{equation*}
N_{q}(G(3,g)) = \frac{(q^{g}-1)(q^{g-1}-1)(q^{g-2}-1)}{(q^3-1)(q-1)^{2}(q+1)}.
\end{equation*}
Therefore,
\begin{align*}
	N_{q}(G(2,g))+N_{q}(G(3,g)) 
	&=\frac{1}{(q^3-1)(q-1)^{2}(q+1)}\left\lbrace q^{3g-3}\left(1+O\left(\frac{1}{q^{g-5}}\right)\right)\right\rbrace \\
	&= q^{3g-9}\left( 1+ O\left( \frac{1}{q^{g-5}}\right) \right). 
\end{align*}
Using \eqref{points over Ms}, along with the above results in \eqref{pointN}, and further simplifying, we can write
\begin{equation}\label{eqn N}
N_{q}(\N)\,=\, U_{1}+U_{2},
\end{equation}
where
\[
U_{1}:=\frac{1}{2}q^{2g-4}\left( 1+O\left( \frac{1}{q}\right) \right)N_{q^2}(J_{H_{t}})
\]
and 
\[
U_{2}:=q^{3g-3}\log{\zeta_{H_{t}}(2)} + \frac{1}{2}q^{2g-5}\left( 1+O\left( \frac{1}{q}\right) \right) N_{q}(J_{H_{t}}) + q^{3g-9}\left( 1+O\left( \frac{1}{q}\right) \right)2^{2g}.
\]
Using Proposition \ref{log-zeta5.3} and Proposition \ref{log_Jac5.1}, we get 
\begin{equation*}
U_{2}= O\left( q^{\frac{7g}{2}}\right) 
\end{equation*}
and 
\begin{equation*}
U_{1}\geq q^{4g-4}\left( \log (7g)\right)^{-3}.
\end{equation*}
For large $q$ and $g$ we have ${\left( \frac{U_{2}}{U_{1}}\right) }=O(q^{-g/2}\left( \log(7g)\right)^3 ).$

Finally from \eqref{eqn N}, we write
\begin{equation*}
N_{q}({\N})=U_{1}\left( 1+O(q^{-g/2}\left( \log(7g)\right)^3)\right). 
\end{equation*}
Taking logarithm on both sides of the above equation and again using Proposition \ref{log_Jac5.1}, we find that
\begin{equation}\label{desingular}
\log{N_{q}({\N})}= \log{U_{1}}+O(q^{-g/2}\left( \log(7g)\right)^3)=(4g-4)\log q + O(1).
\end{equation} 
Next, we recall the following result of Xiong and Zaharescu (Theorem 2 of \cite{XiZa}).\\ 
\begin{lem}\label{XiZa2}
	\begin{enumerate}
		\item[(i)] If $q$ is fixed and $g\rightarrow\infty,$ then for $H_{t}\in \hh,$ the quantity \\
		$\log{N_{q}(J_{H_{t}})} -g\log{q}+\delta_{\gamma/2}\log(1- 1/q)$ converges weakly to a random variable $X,$ whose characteristic function $\phi(\tau)=\mathbb{E}(e^{i\tau X}) $ is given by 
		\begin{align*}
			\phi(\tau)&=
			 1
			 + 
			 \sum\limits_{r=1}\limits^{\infty}
			 \frac{1}{2^{r}r!}
			 \sum\limits_{\substack{P_{j}\text{distinct}\\1\leq j\leq r}}
			 \prod\limits_{j=1}\limits^{r}
			 \left(
			 \frac{(1-\mid P_{j} \mid^{-1})^{-i\tau}
			 	+ (1+\mid P_{j} \mid^{-1})^{-i\tau}-2}{(1+ \mid P_{j}\mid ^{-1})}
			 \right),   
		\end{align*}
		for all $\tau$ in $\mathbb{R}.$
		Here $P_{j}$'s are monic, irreducible polynomials in $\F_{q}[x]$ and $\mid P_{j}\mid= q^{\deg P_{j}}$. $ \delta_{\gamma/2}=1$ if $\gamma$ is even and $0$ otherwise.
		
		\item[(ii)] If both $q, g\rightarrow\infty,$ then for $H_{t}\in \hh,$ the quantity
		$q^{1/2}\left(\log{N_{q}(J_{H_{t}})} -g\log{q}\right)$ is distributed as a standard Gaussian.
		That is, for any $z\in \mathbb{R},$ we have, 
		\begin{equation*}
		\lim\limits_{\substack{q\rightarrow\infty\\g\rightarrow\infty}}
		\frac{1}{\#\hh}\#
		\left\lbrace H_{t} \in \hh:q^{1/2}
		\left(
		\log{N_{q}(J_{H_{t}})} -g\log{q}
		\right)
		\leq z 
		\right\rbrace 
		= \frac{1}{\sqrt{2\pi}}
		\int_{-\infty}^{z}e^{-\tau^{2}/2}d\tau.
		\end{equation*}
	\end{enumerate}
\end{lem}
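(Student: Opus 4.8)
This statement is Theorem $2$ of \cite{XiZa}, and the plan is to establish it by the method of moments over the family $\hh$, in close parallel to the argument for Theorem \ref{thm1.4} carried out in Section \ref{distribution M(n,d)}. The only structural change is that the relevant Dirichlet weight is $q^{-m}$ (a first power) rather than $q^{-2m}$, which reflects that the fluctuation of $\log N_{q}(J_{H_{t}})$ lives at scale $q^{-1/2}$ instead of $q^{-3/2}$; this is exactly why the characteristic function in $(i)$ involves $(1\pm|P_{j}|^{-1})^{-i\tau}$ and the normalisation in $(ii)$ is $q^{1/2}$.

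First I would rewrite the centred quantity as a character sum. Starting from the factorisation of $Z_{H_{t}}$ in \eqref{zeta}, one has $N_{q}(J_{H_{t}})=\prod_{l=1}^{2g}(1-\sqrt{q}\,e(\theta_{l,H_{t}}))$, and factoring out the leading $q^{g}$ (using that the Frobenius eigenvalues occur in conjugate pairs of product $q$) gives
\[
\log N_{q}(J_{H_{t}})-g\log q=-\sum_{m\geq 1}\frac{q^{-m/2}}{m}\sum_{l=1}^{2g}e(-m\theta_{l,H_{t}}).
\]
Substituting the explicit formula \eqref{e1} converts the inner power sum into $\sum_{\deg f=m}\Lambda(f)(F_{t}/f)$ together with the ramified contribution $q^{-m/2}\delta_{\gamma/2}$; summing the latter over $m$ produces precisely $-\delta_{\gamma/2}\log(1-1/q)$, which accounts for the recentring by $+\delta_{\gamma/2}\log(1-1/q)$ in the statement. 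Truncating at a level $Z$ then isolates the main term
\[
D_{Z}(F_{t}):=\sum_{m\leq Z}q^{-m}m^{-1}\sum_{\substack{f\neq\infty\\ \deg f=m}}\Lambda(f)\left(\frac{F_{t}}{f}\right),
\]
the exact analogue of \eqref{main5.1} with $q^{-2m}$ replaced by $q^{-m}$, while the tail is absorbed into an error of size $O((g/Z)\,q^{-Z/2})$.

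Next I would compute the averaged $r$th moment $\langle D_{Z}^{r}\rangle$. Expanding the product and applying the averaging estimate for the Legendre symbol over $\hh$ --- namely that $\langle (F_{t}/f)\rangle$ equals $1+o(1)$ when $f$ is a perfect square and is negligible otherwise, a relation valid once $\deg(f_{1}\cdots f_{r})\leq\gamma$, which forces the admissible range $r\leq\log\gamma$ and $Z\geq\gamma/3$ --- collapses the sum onto the constraint $f_{1}\cdots f_{r}=h^{2}$ and yields $\langle D_{Z}^{r}\rangle=H(r)+O(q^{-Z})$, where $H(r)$ is the first-power analogue of \eqref{H(r)5.1}. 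The closed form for $H(r)$ then follows from the analogue of Proposition \ref{H(r)1}, now with $u_{P}=-\log(1-|P|^{-1})$ and $v_{P}=\log(1+|P|^{-1})$; assembling $\phi(\tau)=1+\sum_{r}(i\tau)^{r}H(r)/r!$ and reindexing exactly as in the lines following Proposition \ref{H(r)1} produces the product formula of part $(i)$. Weak convergence for fixed $q$ then follows by the method of moments, once one checks that the moment sequence $H(r)$ is determinate.

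Finally, for part $(ii)$ I would invoke the asymptotic analogue of Proposition \ref{H(r)2}, namely $H(r)=\delta_{r/2}\,r!\,2^{-r/2}((r/2)!)^{-1}q^{-r/2}+O_{r}(q^{-(r+1)/2})$ as $q\to\infty$, so that the $q^{1/2}$-scaled moments converge to the Gaussian moments $(2s)!/(2^{s}s!)$; the continuity theorem (Theorem $3.3.6$ in \cite{Du}) then gives the standard Gaussian limit. The hard part will be the uniform control of the character-sum average over the family, together with the square-counting bookkeeping that identifies $H(r)$: this is precisely the quadratic Legendre-symbol input of \cite{XiZa}, and once it is granted everything else is a transcription of the machinery already developed for Theorem \ref{thm1.4}.
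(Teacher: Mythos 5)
Your proposal and the paper diverge in a basic way: the paper does not prove this statement at all. Lemma \ref{XiZa2} is quoted verbatim as Theorem 2 of \cite{XiZa} and used as a black box (its only role is in Section \ref{distribution N tilde}, where it is applied over $\F_{q^2}$ to transfer the limit law from $J_{H_t}$ to $\N$). What you have written is, in effect, a reconstruction of Xiong--Zaharescu's own proof, transcribed through the machinery this paper develops for Theorem \ref{thm1.4}: the identity $N_q(J_{H_t})=\prod_{l=1}^{2g}\left(1-\sqrt q\,e(\theta_{l,H_t})\right)$, factoring out $q^g$ via the conjugate pairing of Frobenius angles, the explicit formula \eqref{e1} converting power sums into Legendre-symbol sums (with the prime at infinity producing exactly the recentring term $\delta_{\gamma/2}\log(1-1/q)$), the truncated sum with weight $q^{-m}$ in place of the $q^{-2m}$ of \eqref{main5.1}, the moment computation over $\hh$, the first-power analogues of Propositions \ref{H(r)1} and \ref{H(r)2}, and the continuity theorem. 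This is structurally sound and is indeed the correct route; it buys self-containedness, at the cost of redoing the quadratic character-sum average that the paper (and \cite{ASA}) deliberately outsource.

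One imprecision is worth fixing before you could call this a proof: the averaging input is not ``$\left\langle (F_t/f)\right\rangle = 1+o(1)$ when $f$ is a perfect square.'' When $f=h^2$, the symbol $(F_t/f)$ equals $1$ precisely when $\gcd(F_t,h)=1$ and $0$ otherwise, so the average converges to the coprimality density $\prod_{P\mid h}\left(1+|P|^{-1}\right)^{-1}$, not to $1$. That density is exactly the source of the denominators $\left(1+|P_j|^{-1}\right)$ in the characteristic function of part (i); taken literally, your ``$1+o(1)$'' would produce the wrong limit law. Your conclusion survives only because the $H(r)$ you then import (the first-power analogue of \eqref{H(r)5.1}) already carries the density factor, so the slip is in the statement of the averaging lemma you would need, not in the shape of the argument.
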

We observe from \eqref{desingular} that, for $g \rightarrow\infty,$\\
\begin{equation*}
\log{N_{q}({\N})}-(4g-4)\log{q}=\log{N_{q^2}(J_{H_{t}})}-2g\log{q}.
\end{equation*}
So in Lemma \eqref{XiZa2}, if we change the base field from $F_{q}$ to $F_{q^2}$, we get that the quantity 
\[
\log{N_{q^2}(J_{H_{t}})} -2g\log{q}+\delta_{\gamma/2}\log(1- 1/q^2)
\]
converges weakly to a random variable $X,$ whose characteristic function $\phi(\tau)=\mathbb{E}(e^{i\tau X}) $ is as in the Lemma \eqref{XiZa2}(i), such that the monic, irreducible polynomials $P_{j}$s are in $\F_{q^2}[x],$ and so does the quantity 
\[
\log{N_{q}({\N})}-(4g-4)\log{q} + \delta_{\gamma/2}\log(1- 1/q^2),
\]
 and hence the proof of Theorem \eqref{thm1.6}$(1).$

		Next we see the case when both $q, g\rightarrow\infty.$ From Lemma \eqref{XiZa2}(ii), it follows that for $H_{t}:y^{2}=F_{t}(x)$ in $H_{\gamma, q^2},$ that is, for the the monic square-free polynomial $F_{t}(x)$ in $\F_{q^2}[x]$, the random variable
		\[
		q\left(\log{N_{q^2}(J_{H_{t}})} -2g\log{q}\right)
		\]
		 has a standard Gaussian distribution. Therefore as above we conclude that $q\left( \log{N_{q}({\N})}-(4g-4)\log{q} \right)$ is distributed as a standard Gaussian over the family $\hhh$, and hence the Theorem \eqref{thm1.6}$(2).$

\vspace{2cm}
\textbf{Acknowledgements:} We would like to thank Prof. D. S. Nagaraj for his valuable suggestions and comments when the results of this paper were presented to him. We also express our sincere gratitude to Prof. U.N. Bhosle for helping us in understanding the paper \cite{DeRa}, which was a great help while preparing this work. The second author also thanks Dr. Suhas B. N. for having many useful discussions, which helped her understand the various aspects of moduli spaces.

		\bibliographystyle{amsplain}
	\bibliography{mybibfile}

	\end{document}